\providecommand{\U}[1]{\protect\rule{.1in}{.1in}}
\theoremstyle{plain}
\newtheorem{corollary}{Corollary}
\newtheorem{definition}{Definition}
\newtheorem{example}{Example}
\newtheorem{lemma}{Lemma}
\newtheorem{proposition}{Proposition}
\newtheorem{theorem}{Theorem}
\numberwithin{equation}{section}
\begin{document}
\title[{\normalsize On classical 1-absorbing prime submodules}]{{\normalsize On classical 1-absorbing prime submodules}}
\author{Zeynep Y\i lmaz}
\address{Department of Mathematics, Yildiz Technical University, Istanbul, Turkey;
Orcid: 0009-0009-8107-504X}
\email{zeynepyilmaz@hotmail.com.tr}
\author{Bayram Ali Ersoy}
\address{Department of Mathematics, Yildiz Technical University, Istanbul, Turkey;
Orcid: 0000-0002-8307-9644}
\email{ersoya@yildiz.edu.tr}
\author{\"{U}nsal Tekir}
\address{Department of Mathematics, Marmara University, Istanbul, Turkey;\\
Orcid: 0000-0003-0739-1449}
\email{utekir@marmara.edu.tr}
\author{Suat Ko\c{c}}
\address{Department of Mathematics, Istanbul Medeniyet University, Istanbul, Turkey;
Orcid: 0000-0003-1622-786X }
\email{suat.koc@medeniyet.edu.tr}
\author{Serkan Onar}
\address{Department of Mathematical Engineering, Yildiz Technical University, Istanbul,
Turkey; Orcid: 0000-0003-3084-7694}
\email{serkan10ar@gmail.com}
\subjclass[2000]{13A15, 13C05,13C13.}
\keywords{classical prime submodules, classical 1-absorbing prime submodules, classical
2-absorbing submodules}

\begin{abstract}
In this study, we aim to introduce the concept of classical 1-absorbing prime
submodules of a nonzero unital module $M\ $over a commutative ring $A\ $with
unity. A proper submodule $P$ of $M$ is said to be a classical 1-absorbing
prime submodule, if for each $m\in M$ and nonunits $a,b,c\in A,$ $abcm\in P$
implies that $abm\in P$ or $cm\in P$. We give many examples and properties of
classical 1-absorbing prime submodules. Also, we investiage the classical
1-absorbing prime submodules of tensor product$\ F\otimes M$ of a (faithfully)
flat $A$-module $F$ and any $A$-module $M.\ $Furthermore, we determine
classical prime, classical 1-absorbing prime and classical 2-absorbing
submodules of amalgamated duplication $M\bowtie I$ of an $A$-module $M\ $along
an ideal $I$. Also, we characterize local rings $(A,\mathfrak{m})$ with
$\mathfrak{m}^{2}=0$ in terms of classical 1-absorbing prime submodules.

\end{abstract}
\maketitle

\section{Introduction}

In this paper, we focus only on commutative rings having nonzero identity and
nonzero unital modules. Let $A$ always denote such a ring and $M$ denote such
an $A$-module. A proper submodule $P$ of $M\ $is said to be a \textit{prime
submodule} if whenever $am\in P$ for some $a\in A$ and $m\in M,\ $then
$a\in(P:_{A}M)$ or $m\in P,\ $where $(P:_{A}M)$ is the annihilator
$ann(M/P)\ $of $A$-module $M/P$, that is, $(P:_{A}M)=\{x\in A:xM\subseteq
P\}.\ $In this case, $(P:_{A}M)$ is a prime ideal of $A.\ $However, the
converse is not true in general. For instance, consider $%
\mathbb{Z}
$-module $%
\mathbb{Z}
^{2}$ and note that $P=0\times2%
\mathbb{Z}
$ is not a prime submodule since $2(0,1)\in P$, $2\notin(P:_{%
\mathbb{Z}
}%
\mathbb{Z}
^{2})$ and $(0,1)\notin P$. But $(P:_{%
\mathbb{Z}
}%
\mathbb{Z}
^{2})=0$ is a prime ideal of $%
\mathbb{Z}
.\ $In 2004, Behboodi and Koohy introduced and extensively studied the notion
of weakly prime submodules (See, \cite{X9}, \cite{X10} and \cite{XX}).
According to \cite{XX}, a proper submodule $P$ of $M$ is said to be a
\textit{weakly prime} \textit{submodule }if whenever $a,b\in A$ and $m\in M$
such that $abm\in P$, then $am\in P$ or $bm\in P$.\ Note that every prime
submodule is also weakly prime but the converse is not true in general. For
instance, the submodule $P=0\times2%
\mathbb{Z}
$ of $%
\mathbb{Z}
$-module $%
\mathbb{Z}
^{2}$ is a weakly prime submodule which is not prime. One can easily see that
a proper submodule $P$ of $M\ $is a weakly prime submodule if and only if
$(P:_{A}m)=\{a\in A:am\in P\}$ is a prime ideal of $A$ for every $m\notin P$.
So far, the concept of weakly prime submodules has drawn the attention of many
authors and has been studied in various papers. For example, Azizi in his
paper \cite[Lemma 3.2 and Theorem 3.3]{X4} studied the weakly prime submodules
of tensor product $F\otimes M$ for a flat (faithfully flat) $A$-module $F$ and
any $A$-module $M$ (See, \cite[Lemma 3.2 and Theorem 3.3]{X4}). Here, we
should mention that after some initial works, many authors studied the same
notion under the name "classical prime submodule", and in this paper we prefer
to use classical prime submodule instead of weakly prime submodule. The notion
of prime ideals/submodules and their generalizations have a distinct place in
commutative algebra since not only they are used in the characterization of
important classes of rings/modules but also they have some applications to
other areas such as General topology, Algebraic geometry, Graph theory and
etc. (See, \cite{Yildiz}, \cite{Yildiz2}, \cite{Koc} and \cite{X5}). One of
the important generalization of prime ideal which is called 2-absorbing ideal
was first introduced by Badawi in 2007. According to \cite{X5} a proper ideal
$I\ $of $A$ is said to be a \textit{2-absorbing ideal} if whenever $abc\in I$
for some $a,b,c\in A,\ $then $ab\in I$ or $ac\in I$ or $bc\in I.\ $In the last
16 years, where as the theory of 2-absorbing ideal has been developed by many
authors, there are still some open questions about 2-absorbing ideals. For
more information about 2-absorbing ideals, the reader may consult
\cite{Badawi2}. Motivated by the notion of 2-absorbing ideals, Mostafanasab et
al. defined classical 2-absorbing submodules as follows: a proper submodule
$P$ of $M\ $is said to be a classical 2-absorbing submodule if $abcm\in P$ for
some $a,b,c\in A$\ and $m\in M$ implies that $abm\in P$ or $acm\in P$ and
$bcm\in P$ \cite{X18}. Note that every classical prime submodule is also
classical 2-absorbing but the converse is not true in general. Consider $%
\mathbb{Z}
$-module $%
\mathbb{Z}
_{p^{2}}$ and $P=\langle\overline{0}\rangle$ zero submodule, where $p\ $is a
prime number. Then $P\ $is not a classical prime since $p^{2}\overline{1}\in
P$ and $p\overline{1}\notin P.$ On the other hand, one can easily see that $P$
is a classical 2-absorbing submodule. In a recent study, Yassine et al.
introduced the notion of 1-absorbing prime ideal which is an intermediate
class between prime ideals and 2-absorbing ideals: a proper ideal $I$ of
$A\ $is said to be a 1-absorbing prime ideal if whenever $abc\in I$ for some
nonunits $a,b,c$ in $A,\ $then either $ab\in I$ or $c\in I\ $\cite{X8}. The
authors proved that if a ring $A\ $admits a 1-absorbing prime ideal which is
not a prime ideal, then $A\ $is a local ring, that is, it has unique maximal
ideal. By definitions, we have the following implications
\[
\text{prime ideal }\Rightarrow\text{ 1-absorbing prime ideal }\Rightarrow
\text{ 2-absorbing prime ideal}%
\]
However the above arrows are not reversible in general. For instance, in a
local ring $(A,\mathfrak{m})$ where $\mathfrak{m}^{2}\neq\mathfrak{m}$, then
$\mathfrak{m}^{2}$ is a 1-absorbing prime ideal which is not prime. On the
other hand, in the polynomial ring $k[x,y]\ $where $k$ is a field, $P=(xy)$ is
a 2-absorbing ideal which is not 1-absorbing prime. Our aim in this paper is
to introduce and study classical 1-absorbing prime submodules, and also use
them to characterize some important class of rings. A proper submodule $P$ of
$M\ $is said to be a \textit{classical 1-absorbing prime submodule} if
whenever $abcm\in P$ for some nonunits $a,b,c\in A$ and $m\in M,\ $then
$abm\in P$ or $cm\in P.$ For the completeness of the paper, now we shall give
some notions and notations which are needed in the sequel. Let $A$ be a ring
and $M\ $an $A$-module. For every proper ideal $I$ of $A,\ $the radical
$\sqrt{I}$ of $I$ is defined as the intersection of all prime ideals that
contains $I$, or equivalently, $\sqrt{I}=\{a\in A:a^{k}\in I$ for some $k\in%
\mathbb{N}
\}\ $\cite{X6}. For every element $x\in A$\ $(m\in M),\ $principal ideal
(cyclic submodule)\ generated by $x\in A\ $($m\in M)$ is denoted by $\langle
x\rangle=xA$ ($\langle m\rangle=Am).\ $Also, for any subset $K$ of $M$ and any
submodule $P$ of $M,\ $the residual of $P\ $by $K$\ is defined by
$(P:_{A}K)=\{a\in A:aK\subseteq P\}.\ $In particular, if $P=\langle0\rangle$
is the zero submodule and $K=\{m\}$ is a singleton for some $m\in M,\ $we
prefer to use $ann(m)\ $instead of $(\langle0\rangle:_{A}K).\ $Similarly, we
prefer to use $ann(M)\ $to denote the annihilator of $M\ $instead of
$(0:_{A}M)$ \cite{X17}. Recall from \cite{X17} that $M\ $is said to be a
\textit{faithful module} if $ann(M)\ $is the zero ideal of $A.\ $An $A$-module
$M$ is called a \textit{multiplication module} if every submodule $P$ of $M$
has the form $IM$ for some ideal $I$ of $A$ \cite{X7}. In this case, we have
$P=(P:_{A}M)M.\ $Also for any submodule $P$ of $M\ $and any subset $J$ of $A$,
the residual of $P$ by $J$ is defined by $(P:_{M}J)=\{m\in M:Jm\subseteq
P\}.\ $Among other results in Section 2, first we show that the arrows in the
following diagram are not reversible in general:
\[
P\text{ is classical prime }\Rightarrow P\ \text{is 1-absorbing classical
prime }\Rightarrow P\text{ is 2-absorbing classical prime}%
\]
(See, Example \ref{ex1} and Example \ref{ex2}).\ We give some
characterizations of classical 1-absorbing prime submodules in general modules
over commutative rings (See, Theorem \ref{tmain} and Theorem \ref{tmain2}%
).\ Also, we investigate the stability of classical 1-absorbing prime
submodules under homomorphisms, in factor modules, in localization of modules,
in cartesian product of modules, in multiplication modules (See, Theorem
\ref{thom}, Corollary \ref{cor1}, Proposition \ref{quotient}, Theorem
\ref{tcar}, Theorem \ref{tcargen} and Theorem \ref{tmult}).\ Furthermore, we
determine the classical 1-absorbing prime submodules of tensor product
$F\otimes M$ for a flat (faithfully flat) $A$-module $F$ and any $A$-module
$M$ (See, Theorem \ref{theo6} and Theorem \ref{ttensor}). We characterize
rings $A\ $over which every proper submodule $P\ $of an $A$-module $M\ $is
classical 1-absorbing submodule. In particular, we show that for a ring
$A\ $and every $A$-module $M,\ $every proper submodule of $M$ is classical
1-absorbing prime submodule if and only if $(A,\mathfrak{m})$ is a local ring
with $\mathfrak{m}^{2}=(0)$ (See, Corollary \ref{cormain}). In commutative
algebra, Krull seperation lemma states that if $S\ $is a multiplicatively
closed set and $I\ $is an ideal of $A\ $disjiont from $S,\ $then the set of
all ideals of $A$ disjoint from $S$ has at least one maximal element. Any such
maximal element is a prime ideal \cite{X17}. By defining classical 1-absorbing
prime m-closed set, we prove Krull seperation lemma for classical 1-absorbing
prime submodules (See, Definition \ref{dmultiplicatively}, Proposition
\ref{pro9} and Theorem \ref{tkrull}). Also, we investigate the number of
minimal classical 1-absorbing prime submodules in a Noetherian module (See,
Theorem \ref{tNoetherian}).

Section 3 is dedicated to the study of classical prime, classical 1-absorbing
prime and classical 2-absorbing submodules of amalgamated duplication of
modules over commutative rings. Let\ $A$ be a ring and $I$ be an ideal of $A$.
\textit{The amalgamated duplication of a ring} $A$ \textit{along an ideal}
$I,$ denoted by $A\bowtie I,$ was firstly introduced and studied by D'anna and
Fontana in \cite{Danna}. The amalgamated duplication $A\bowtie
I=\{(a,a+i):a\in A,i\in I\}$ of a ring $A$ along an ideal $I$ is a special
subring of $A\times A$ with componentwise addition and multiplication. In
fact, $A\bowtie I$ is a commutative subring having the same identity of
$A\times A.\ $In \cite{Danna2}, D'anna, Finocchiaro and Fontana introduced the
more general context of amalgamation of rings. The concept of amalgamation of
rings has an important place in commutative algebra and it has been widely
studied by many well known algebraist. One of the most importance aspects of
the amalgamation of rings is to cover basic constructions in commutative
algebra including classical pullbacks, trivial ring extensions, $A+XB[X]$,
$A+XB[[X]]$ and $D+M$ constructions, CPI extensions (in the sense of Boisen
and Sheldon \cite{Boisen}) (See, \cite[Example 2.5-Example 2.7 and Remark
2.8]{Danna2}). In 2018, Bouba, Mahdou and Temakkante extended the concept of
amalgamated duplication of a ring along an ideal to the context of modules as
follows. Let $M\ $be an $A$-module and $I$ be an ideal of $A.\ $\textit{The
amalgamated duplication of an} $A$-\textit{module} $M$ \textit{along an ideal}
$I$, denoted by $M\bowtie I=\{(m,m+m^{\prime}):m\in M,\ m^{\prime}\in IM\}$ is
an $A\bowtie I$-module with componentwise addition and the following scalar
multiplication:\ $(a,a+i)(m,m+m^{\prime})=(am,(a+i)(m+m^{\prime}))$ for each
$(a,a+i)\in A\bowtie I$ and $(m,m+m^{\prime})\in M\bowtie I\ $\cite{Bou}. Note
that if we consider $M=A$ as an $A$-module, then the amalgamated duplication
$M\bowtie I$ of the $A$-module $M\ $along the ideal $I\ $and the amalgamated
duplication $A\bowtie I$ of the ring $A\ $along the ideal $I\ $coincide. If
$P\ $is a submodule of $M$, then it is easy to check that $P\bowtie
I=\{(m,m+m^{\prime}):m\in P,m^{\prime}\in IM\}$ is an $A\bowtie I$-submodule
of $M\bowtie I.\ $Now, one can naturally ask the classical prime, classical
1-absorbing prime and classical 2-absorbing submodules of $M\bowtie I.\ $In
this section, first we find a useful equality for the residual of $P\bowtie I$
by $(a,a+i)\in A\bowtie I$\ or $(m,m+m^{\prime})\in M\bowtie I$ (See, Lemma
\ref{lemfin}). Then by Lemma \ref{lemfin} and Theorem \ref{tmain}, we
determine the classical prime, classical 1-absorbing prime and classical
2-absorbing submodules of $M\bowtie I$ (See, Theorem \ref{tmainnn}).\ 

\section{Properties of classical 1-absorbing prime submodules}

\begin{definition}
A proper submodule $P$ of an $A$-module $M$ is said to be a classical
1-absorbing prime submodule if whenever $abcm\in P$ for some nonunits
$a,b,c\in A$ and $m\in M,\ $then $abm\in P$ or $cm\in P.$
\end{definition}

In the following we give our first result. Since its proof is elemantary, we
omit the proof.

\begin{proposition}
\label{p1}Let $P$ be a proper submodule of an $A$-module $M.\ $Then $P$ is
classical prime $\Rightarrow$ $P\ $is classical 1-absorbing prime
$\Rightarrow$ $P$ is classical 2-absorbing.
\end{proposition}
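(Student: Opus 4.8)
The plan is to verify the two implications directly from the definitions, keeping in mind that the classes of elements quantified over are slightly different (arbitrary ring elements for the classical prime and classical 2-absorbing conditions, but only nonunits for the classical 1-absorbing condition), so the first implication needs a small extra remark to reconcile this.

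\textbf{Classical prime $\Rightarrow$ classical 1-absorbing prime.} Suppose $P$ is classical prime and take nonunits $a,b,c\in A$ and $m\in M$ with $abcm\in P$. Viewing $abcm$ as $(ab)\cdot(cm)$ and applying the classical prime hypothesis with the ring element $ab$ and the module element $cm$, we get either $(ab)(cm)$ forces $ab\cdot(cm)\in P$ which is our hypothesis, so we instead reorganize: write $abcm=(ab)(c)m$. Since $P$ is classical prime, from $(ab)\bigl((c)m\bigr)\in P$ it is cleaner to apply the weakly/classical prime characterization $(P:_A m')$ prime. Concretely: if $cm\in P$ we are done; otherwise $cm\notin P$, and from $a\bigl(b(cm)\bigr)\in P$ the classical prime property (applied to the element $a$ and the module element $b(cm)$) gives $a(b(cm))\in P \Rightarrow bcm\in P$ or $a(bcm)\in P$; here note $a(bcm)=abcm\in P$ is already known, so this is not yet conclusive. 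The correct route is: since $P$ is classical prime, $(P:_A m)$ is a prime ideal of $A$ for every $m\notin P$ (as recalled in the introduction for weakly prime submodules). So assume $cm\notin P$. Then $abc\in(P:_A m)$; wait, we only know $abcm\in P$, i.e.\ $abc\in(P:_A m)$. Since $(P:_A m)$ is prime and $abc=(ab)c\in(P:_A m)$, either $c\in(P:_A m)$, i.e.\ $cm\in P$ (contradiction), or $ab\in(P:_A m)$, i.e.\ $abm\in P$. Either way $abm\in P$ or $cm\in P$, which is exactly the classical 1-absorbing prime condition. (The restriction to nonunit $a,b,c$ is not even needed for this direction, but it does no harm.)

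\textbf{Classical 1-absorbing prime $\Rightarrow$ classical 2-absorbing.} Suppose $P$ is classical 1-absorbing prime and take $a,b,c\in A$ and $m\in M$ with $abcm\in P$; we must show $abm\in P$ or ($acm\in P$ and $bcm\in P$). If any of $a,b,c$ is a unit the conclusion is immediate: e.g.\ if $c$ is a unit then from $abcm\in P$ we get $abm=c^{-1}(abcm)\in P$; if $a$ is a unit then $bcm\in P$, and then $acm=a c m$ and $abm$: from $bcm\in P$ we get $acm=a(bcm)b^{-1}$? — this needs care, so instead argue symmetrically: if $a$ is a unit, $abcm\in P\Rightarrow bcm\in P\Rightarrow acm\in P$ and $abm=ab m$, and $abm=a b c m c^{-1}$ only if $c$ is a unit. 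The clean statement: if at least one of $a,b,c$ is a unit, one checks each of the (at most three) cases and sees the classical 2-absorbing conclusion holds trivially. If $a,b,c$ are all nonunits, the classical 1-absorbing prime hypothesis applies directly to give $abm\in P$ or $cm\in P$; in the first case we are done, and in the second case $cm\in P$ immediately yields both $acm\in P$ and $bcm\in P$, so again the classical 2-absorbing conclusion holds.

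The only mild obstacle is bookkeeping: in the second implication one must dispose of the cases where some of $a,b,c$ are units, since the classical 1-absorbing prime definition says nothing about them, whereas the classical 2-absorbing definition quantifies over all ring elements. These unit cases are entirely routine (invert the unit, or absorb it), so the paper's decision to omit the proof as "elementary" is justified; a one-line remark handling the unit cases would suffice if a full proof were wanted.
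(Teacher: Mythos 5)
Your proof is correct; the paper itself omits the argument as elementary, so there is no official proof to compare against. Two small remarks. First, your detour through the primeness of $(P:_A m)$ in the first implication is unnecessary: the classical prime condition applied directly to the ring elements $ab$ and $c$ and the module element $m$ reads ``$(ab)\cdot c\cdot m\in P$ implies $(ab)m\in P$ or $cm\in P$,'' which is verbatim the classical $1$-absorbing conclusion (and, as you note, never uses that $a,b,c$ are nonunits). Second, your hesitation in the unit case of the second implication comes only from reading the paper's ambiguous phrase ``$abm\in P$ or $acm\in P$ and $bcm\in P$'' as ``$abm\in P$ or ($acm\in P$ and $bcm\in P$)''; the definition in the cited source of Mostafanasab et al.\ is the disjunction $abm\in P$ or $acm\in P$ or $bcm\in P$, under which your observation that inverting the unit puts one of the three products in $P$ settles that case immediately, and your main case (all of $a,b,c$ nonunits, where $cm\in P$ forces both $acm\in P$ and $bcm\in P$) in fact proves the stronger conjunctive reading as well.
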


The arrows in the previous proposition are not reversible in general. See the
following examples.

\begin{example}
\label{ex1}\textbf{(Classical 1-absorbing prime which is not classical
prime)\ }Let $p$ be a prime number and consider $%
\mathbb{Z}
_{p^{2}}$-module$\ M=%
\mathbb{Z}
_{p^{2}}[x]$ where $x$ is an indeterminate over $%
\mathbb{Z}
_{p^{2}}.\ $Let $P=(\overline{0})$ be zero submodule of $M.\ $Since
$\overline{p}^{2}x=\overline{0}$ and $\overline{p}x\neq\overline{0},\ $it
follows that $P$ is not a classical prime. Let $a,b,c$ be nonunits in $%
\mathbb{Z}
_{p^{2}}$ and $f(x)\in M$ such that $abcf(x)=\overline{0}.\ $Since $a,b,c$ are
nonunits, then we have $p$ divides $a,b\ $and $c$ which implies that
$abf(x)=\overline{0}.\ $Thus, $P$ is a classical 1-absorbing prime submodule
of $M.$
\end{example}

\begin{example}
\label{ex2}\textbf{(Classical 2-absorbing which is not classical 1-absorbing
prime)\ }Consider $%
\mathbb{Z}
$-module $M=%
\mathbb{Z}
_{p}\oplus%
\mathbb{Z}
_{q}\oplus%
\mathbb{Z}
$ and let $P$ be zero submodule of $M$ where $p,q$ are distinct prime
numbers$.\ $Since $p^{2}q(\overline{1},\overline{1},0)\in P$, $p^{2}%
(\overline{1},\overline{1},0)\notin P$ and $q(\overline{1},\overline
{1},0)\notin P,\ $it follows that $P$ is not a classical 1-absorbing prime
submodule of $M.$\ Also, one can easily see that $P$ is a classical
2-absorbing submodule of $M.$
\end{example}

Here, we should mention that there exists $A$-module $M$ which has no
classical 1-absorbing prime submodule. However, since in finitely generated
modules or in multiplication modules there exists always prime submodules,
these two classes of modules have always 1-absorbing prime submodules. In the
following example, $A$-module $M\ $is not finitely generated and not
multiplication module.

\begin{example}
Let $p$ be a (fixed) prime number and $%
\mathbb{N}
_{0}=%
\mathbb{N}
\cup\{0\}$.\ Then $E\left(  p\right)  :=\left\{  \alpha\in%
\mathbb{Q}
/%
\mathbb{Z}
:\alpha=\frac{r}{p^{n}}+%
\mathbb{Z}
\text{ \ for some }r\in%
\mathbb{Z}
\text{ and }n\in%
\mathbb{N}
_{0}\right\}  $ is a nonzero submodule of the $%
\mathbb{Z}
$-module $%
\mathbb{Q}
/%
\mathbb{Z}
$. Also by \cite{X17}, we know that all submodules of $E(p)$ has the following
form for some $t\in%
\mathbb{N}
\cup\{0\}:$ \ \ \ \ \ \ \ \ \ \ \ \ \ \ \
\[
G_{t}:=\left\{  \alpha\in%
\mathbb{Q}
/%
\mathbb{Z}
:\alpha=\frac{r}{p^{t}}+%
\mathbb{Z}
\text{ for some }r\in%
\mathbb{Z}
\right\}  .
\]
However, no $G_{t}$ is a classical 1-absorbing prime submodule of $E\left(
p\right)  $. Indeed, $\frac{1}{p^{t+3}}+%
\mathbb{Z}
\in E\left(  p\right)  $ and $p^{3}\left(  \frac{1}{p^{t+3}}+%
\mathbb{Z}
\right)  =\frac{1}{p^{t}}+%
\mathbb{Z}
\in G_{t}$ but $p^{2}\left(  \frac{1}{p^{t+3}}+%
\mathbb{Z}
\right)  =\frac{1}{p^{t+1}}+%
\mathbb{Z}
\notin G_{t}$ and $p\left(  \frac{1}{p^{t+3}}+%
\mathbb{Z}
\right)  =\frac{1}{p^{t+2}}+%
\mathbb{Z}
\notin G_{t}$.
\end{example}

Recall from \cite{X11} that a proper submodule $P$ of $M\ $is said to be a
\textit{1-absorbing prime submodule} if $abcm\in P$ for some nonunits
$a,b,c\in R$ and $m\in M,\ $we have either $ab\in(P:_{A}M)$ or $cm\in P$.

\begin{proposition}
\label{pro2} Let $P$ be a proper submodule of an $A$-module $M$. If $P$ is a
1-absorbing prime submodule of $M$, then $P$ is a classical 1-absorbing prime
submodule of $M$.
\end{proposition}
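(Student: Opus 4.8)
The plan is to unwind both definitions and show the hypothesis of the classical 1-absorbing prime condition forces one of its two conclusions, using the (stronger) 1-absorbing prime property as the engine. So suppose $P$ is a 1-absorbing prime submodule of $M$, and take nonunits $a,b,c \in A$ and $m \in M$ with $abcm \in P$. We must show $abm \in P$ or $cm \in P$.

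**First** I would apply the 1-absorbing prime hypothesis directly to $abcm \in P$: since $a,b,c$ are nonunits, the definition gives either $ab \in (P:_A M)$ or $cm \in P$. In the second case we are already done, so assume $ab \in (P:_A M)$, i.e. $abM \subseteq P$. **Then** in particular $abm \in abM \subseteq P$, which is exactly the first alternative in the classical 1-absorbing prime condition. Hence in either case the required conclusion holds, and $P$ is a classical 1-absorbing prime submodule of $M$.

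**The only point that needs a word of care** is that the definition of 1-absorbing prime submodule recalled from \cite{X11} is phrased with nonunits "$a,b,c \in R$" — presumably a typo for $A$ — so one should note at the outset that $R = A$ here; otherwise the argument is completely routine and short, with no genuine obstacle. It is essentially the module analogue of the trivial implication "prime ideal $\Rightarrow$ 1-absorbing prime ideal" combined with the fact that $(P:_A M) \subseteq (P:_A m)$ for every $m \in M$.
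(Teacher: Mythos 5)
Your proof is correct and follows exactly the same route as the paper's: apply the 1-absorbing prime condition to $abcm\in P$ to get $ab\in(P:_A M)$ or $cm\in P$, and note the first alternative yields $abm\in abM\subseteq P$. Nothing further is needed.
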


\begin{proof}
Assume that $P$ is a 1-absorbing prime submodule of $M$. Let $a,b,c\in A$ be
nonunits and $m\in M$ such that $abcm\in$ $P$. Since $P$ is a 1-absorbing
prime submodule of $M$, we have either $ab\in\left(  P:_{A}M\right)  $ or
$cm\in P$.\ The the first case gives that $abm\in P$. The second case leads us
to the claim. Consequently, $P$ is a classical 1-absorbing prime submodule.
\end{proof}

The converse of previous proposition need not be true. See the following example.

\begin{example}
\textbf{(Classical 1-absorbing prime which is not 1-absorbing prime) }Let $p$
be a prime number and consider $%
\mathbb{Z}
$-module $M=%
\mathbb{Z}
_{p}\oplus%
\mathbb{Q}
$.\ Assume that $P\ $is zero submodule of $M.\ $Let $a,b,c$ be nonunits in $%
\mathbb{Z}
$ and $(\overline{x},y)\in M$ such that $abc(\overline{x},y)=(0,0).\ $Without
loss of generality we may assume that $y=0$. Then we have $p\ $divides
$a,b,c\ $or $x$. Which implies that $ab(\overline{x},y)=(0,0)$ or
$c(\overline{x},y)=(0,0).\ $Now, choose a prime number $q\neq p.\ $Since
$p^{2}q(\overline{1},0)=(\overline{0},0),\ p^{2}\notin(P:_{%
\mathbb{Z}
}M)=0\ $and $q(\overline{1},0)\neq(\overline{0},0),\ $it follows that $P\ $is
not a 1-absorbing prime submodule of $M.$
\end{example}

\begin{proposition}
Let $P$ be a proper submodule of an $A$-module $M$. Then $P$ is a classical
prime submodule of $M$ if and only if $P$ is a classical 1-absorbing prime and
semiprime submodule of $M$.
\end{proposition}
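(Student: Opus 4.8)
The plan is to prove the two implications separately, with all the real content living in the converse.

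For the forward direction, assume $P$ is classical prime. That $P$ is then classical $1$-absorbing prime is immediate from Proposition \ref{p1}. To check that $P$ is semiprime, I would take $a\in A$, $m\in M$ and $k\in\mathbb{N}$ with $a^{k}m\in P$, and induct on $k$: the case $k=1$ is trivial, and for $k\geq 2$ we write $a^{k}m=a(a^{k-1}m)\in P$, so the classical prime property gives $am\in P$ (done) or $a^{k-1}m\in P$ (done by the inductive hypothesis). Hence $am\in P$ in all cases, so $P$ is semiprime.

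For the converse, assume $P$ is both classical $1$-absorbing prime and semiprime, and let $a,b\in A$, $m\in M$ satisfy $abm\in P$; the goal is $am\in P$ or $bm\in P$. If $a$ is a unit then $bm=a^{-1}(abm)\in P$, and symmetrically if $b$ is a unit then $am\in P$; so we may assume $a$ and $b$ are both nonunits. The key move is to supply the $1$-absorbing hypothesis with a productively chosen triple of nonunits: multiplying $abm\in P$ by $b$ gives $b\cdot b\cdot a\cdot m=b(abm)\in P$, where $b,b,a$ are all nonunits. Applying the classical $1$-absorbing prime property (with the distinguished last factor equal to $a$) yields $b^{2}m\in P$ or $am\in P$. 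In the former case, semiprimeness of $P$ forces $bm\in P$; in the latter case we are already done. Thus $am\in P$ or $bm\in P$, so $P$ is classical prime.

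The step I expect to be the only real obstacle is the choice of grouping in the converse: because the definition of a classical $1$-absorbing prime submodule treats the third factor asymmetrically, the naive grouping $a\cdot b\cdot b\cdot m$ merely returns the tautology $abm\in P$ and is useless; one must instead multiply $abm$ by $b$ and regroup so that $a$ (not $b$) becomes the distinguished factor, after which the semiprime hypothesis cleans up the remaining case $b^{2}m\in P$. Everything else is routine bookkeeping with units and nonunits.
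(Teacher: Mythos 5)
Your proof is correct, and in the converse it takes a genuinely different --- and in fact more robust --- route than the paper. The paper also reduces to the case where $a,b$ are nonunits, but then forms the product $a(a+b)bm\in P$ and applies the classical $1$-absorbing property to the triple $(a,\,a+b,\,b)$, concluding $a(a+b)m\in P$ (whence $a^{2}m\in P$ and, by semiprimeness, $am\in P$) or $bm\in P$. That argument requires $a+b$ to be a nonunit, which the paper asserts without justification and which can fail outside local rings (e.g.\ $a=3$, $b=-2$ in $\mathbb{Z}$), so the paper's version has a gap precisely where yours does not. Your choice of triple $(b,b,a)$, applied to $b(abm)=ab^{2}m\in P$, uses only that $a$ and $b$ themselves are nonunits, so it works unconditionally; the semiprime hypothesis then disposes of the branch $b^{2}m\in P$ exactly as the paper's disposes of $a^{2}m\in P$. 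Your remark that the distinguished third slot must be occupied by $a$ rather than $b$ is exactly the right point. The forward direction matches the paper's; your induction on $k$ is harmless extra generality, since only the $a^{2}m\in P\Rightarrow am\in P$ form of semiprimeness is needed.
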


\begin{proof}
The "if part" follows from Proposition \ref{p1}, $P$ is a classical
1-absorbing prime submodule.Now, we want show $P$ is a semiprime submodule.
Let $a\in A$, $m\in M$ such that $a^{2}m\in P$. Since $P$ is a classical prime
submodule we get $am\in M$ and we conclude that $P$ is a semiprime submodule
of $M$ \cite[Proposition 2.1]{XX}. For the "only if part", assume that $P$ is
a classical 1-absorbing prime and semiprime submodule of $M$. Let $abm\in
P\ $for some $a,b\in A\ $and $m\in M.$ If $a$ or $b$ is unit, we have $am\in
P$ or $bm\in P.\ $So assume that $a,b$ are nonunits in $A.$Hence $\left(
a+b\right)  $ be nonunit in $A$.Thus we may write $a\left(  a+b\right)  bm\in
P$. Since $P$ is a classical 1-absorbing prime submodule of $M,$ we have
either $a\left(  a+b\right)  m\in P$ or $bm\in P.\ $If $a\left(  a+b\right)
\in P$ and then $a^{2}m+abm\in P.$Therefore, we get $a^{2}m\in P$. Since $P$
is a semiprime submodule of $M$ we conclude that $am\in P.\ $The second case
leads us to the claim.Thus, $P$ is a classical prime submodule of $M.$
\end{proof}

\begin{theorem}
\label{thom}Let $f:M_{1}\rightarrow M_{2}$ be an $A$-homomorphism.

(i)\ If $P_{2}$ is a classical 1-absorbing prime submodule of $M_{2}$, then
either $f^{-1}(P_{2})=M_{1}$ or$\ f^{-1}(P_{2})$ is a classical 1-absorbing
prime submodule of $M_{1}.$

(ii) If $f$ is surjective and $P_{1}\ $is a classical 1-absorbing prime
submodule of $M_{1}$ containing $Ker(f),\ $then $f(P_{1})$ is a classical
1-absorbing prime submodule of $M_{1}.$
\end{theorem}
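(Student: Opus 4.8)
The plan is to verify each part directly from the definition of classical 1-absorbing prime submodule, using the fact that $f$ is an $A$-homomorphism so that pre-images and images of submodules behave well with respect to scalar multiplication.

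For part (i), suppose $f^{-1}(P_2)\neq M_1$, so $f^{-1}(P_2)$ is a proper submodule of $M_1$. Take nonunits $a,b,c\in A$ and $m\in M_1$ with $abcm\in f^{-1}(P_2)$. Applying $f$ and using that it is $A$-linear gives $abc\,f(m)=f(abcm)\in P_2$. Since $P_2$ is classical 1-absorbing prime in $M_2$, either $ab\,f(m)\in P_2$ or $c\,f(m)\in P_2$; that is, $f(abm)\in P_2$ or $f(cm)\in P_2$, hence $abm\in f^{-1}(P_2)$ or $cm\in f^{-1}(P_2)$. This is exactly the required condition. (I would also note in passing that the statement of the theorem should read ``classical 1-absorbing prime submodule of $M_2$'' in part (ii) rather than $M_1$, since $f(P_1)\subseteq M_2$; I will phrase the proof so the conclusion lands in $M_2$.)

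For part (ii), assume $f$ is surjective and $P_1$ is a classical 1-absorbing prime submodule of $M_1$ with $\mathrm{Ker}(f)\subseteq P_1$. First, $f(P_1)$ is a proper submodule of $M_2$: if $f(P_1)=M_2$, then for any $x\in M_1$ there is $p\in P_1$ with $f(x)=f(p)$, so $x-p\in\mathrm{Ker}(f)\subseteq P_1$, forcing $x\in P_1$ and contradicting properness of $P_1$. Now take nonunits $a,b,c\in A$ and $m_2\in M_2$ with $abc\,m_2\in f(P_1)$. By surjectivity write $m_2=f(m_1)$ for some $m_1\in M_1$, and pick $p\in P_1$ with $f(abc\,m_1)=abc\,f(m_1)=f(p)$; then $abc\,m_1-p\in\mathrm{Ker}(f)\subseteq P_1$, so $abc\,m_1\in P_1$. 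Since $P_1$ is classical 1-absorbing prime, $ab\,m_1\in P_1$ or $c\,m_1\in P_1$, whence $ab\,m_2=f(ab\,m_1)\in f(P_1)$ or $c\,m_2=f(c\,m_1)\in f(P_1)$.

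There is no serious obstacle here; the only point requiring care — and the ``main step'' in each part — is the use of the containment $\mathrm{Ker}(f)\subseteq P_1$ in part (ii), first to show $f(P_1)$ is proper and then to pull the membership $abc\,m_2\in f(P_1)$ back to $abc\,m_1\in P_1$ despite $f$ not being injective. Everything else is a routine transport of the defining condition across the $A$-linear map $f$, so I would present the argument compactly.
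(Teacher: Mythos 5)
Your argument is correct and follows essentially the same route as the paper: apply $f$ and the defining condition of $P_2$ for part (i), and in part (ii) lift $m_2$ through the surjection and use $\mathrm{Ker}(f)\subseteq P_1$ to pull $abc\,m_2\in f(P_1)$ back to $abc\,m_1\in P_1$. You are in fact slightly more careful than the paper, which omits the verification that $f(P_1)$ is proper and does not flag the typo ($f(P_1)$ is of course a submodule of $M_2$, not $M_1$).
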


\begin{proof}
$(i):\ $Let $P_{2}$ be a classical 1-absorbing prime submodule of $M_{2}$ such
that $f^{-1}(P_{2})\neq M_{1}.\ $Choose nonunits $a,b,c\in A$ and $m\in M_{1}$
such that $abcm\in f^{-1}(P_{2}).\ $Then we have $f(abcm)=abcf(m)\in P_{2}%
.\ $As $P_{2}$ is a classical 1-absorbing prime submodule of $M_{2},$ we
conclude that $abf(m)=f(abm)\in P_{2}$ or $cf(m)=f(cm)\in P_{2}$ which implies
that $abm\in f^{-1}(P_{2})$ or $cm\in f^{-1}(P_{2}).\ $Thus, $f^{-1}(P_{2})$
is a classical 1-absorbing prime submodule of $M_{1}.$

$(ii):\ $Let $abcm^{\prime}\in f(P_{1})$ for some nonunits $a,b,c\in A$ and
$m^{\prime}\in M_{2}.\ $Since $f$ is surjective, there exists $m\in M_{1}$
such that $m^{\prime}=f(m).\ $Then we have $abcm^{\prime}=f(abcm)\in
f(P_{1}).\ $As $P_{1}$ contains $Ker(f),\ $we get $abcm\in P_{1}.\ $Since
$abm\in P_{1}$ or $cm\in P_{1}.\ $Then we conclude that either $abm^{\prime
}=f(abm)\in f(P_{1})$ or $cm^{\prime}=f(cm)\in f(P_{1}).\ $Hence, $f(P_{1})$
is a classical 1-absorbing prime submodule of $M_{1}.$
\end{proof}

\begin{corollary}
\label{cor1}Let $M$ be an $A$-module and $L\subseteq P$ be two submodules of
$M$. Then $P$ is a classical 1-absorbing prime submodule of $M$ if and only if
$P/L$ is a classical 1-absorbing prime submodule of $M/L$.
\end{corollary}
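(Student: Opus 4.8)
The plan is to deduce both implications directly from Theorem \ref{thom} applied to the canonical projection $\pi\colon M\to M/L$, which is a surjective $A$-homomorphism with $Ker(\pi)=L$.

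For the ``only if'' part, I would assume $P$ is a classical 1-absorbing prime submodule of $M$. Since $L\subseteq P$, the submodule $P$ contains $Ker(\pi)$, so Theorem \ref{thom}(ii) applies and yields that $\pi(P)=P/L$ is a classical 1-absorbing prime submodule of $M/L$; in particular $P/L$ is proper, since $P\neq M$.

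For the ``if'' part, I would assume $P/L$ is a classical 1-absorbing prime submodule of $M/L$. Applying Theorem \ref{thom}(i) to $\pi$, either $\pi^{-1}(P/L)=M$ or $\pi^{-1}(P/L)$ is a classical 1-absorbing prime submodule of $M$. Because $L\subseteq P$ we have $\pi^{-1}(P/L)=P$, and $P\neq M$ as $P/L$ is proper; hence $P$ is a classical 1-absorbing prime submodule of $M$.

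There is essentially no obstacle here: the only points to verify are the set-theoretic identities $Ker(\pi)=L$ and $\pi^{-1}(P/L)=P$ (the latter using $L\subseteq P$), together with the preservation of properness, all of which are routine. Alternatively, one could give a self-contained elementwise argument: for nonunits $a,b,c\in A$ and $\overline{m}\in M/L$, the relation $abc\,\overline{m}\in P/L$ is equivalent to $abcm\in P$, while $ab\,\overline{m}\in P/L$ (resp. $c\,\overline{m}\in P/L$) is equivalent to $abm\in P$ (resp. $cm\in P$), so the defining condition transfers verbatim between $M$ and $M/L$.
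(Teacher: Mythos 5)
Your proof is correct and follows exactly the paper's route: the paper also takes the canonical projection $\pi\colon M\to M/L$ and invokes Theorem \ref{thom}, merely leaving implicit the details you spell out (which part of the theorem handles which direction, $Ker(\pi)=L$, and $\pi^{-1}(P/L)=P$). Your version is simply a more explicit rendering of the same argument.
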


\begin{proof}
Consider the surjective homomorphism $\pi:M\rightarrow M/L$ defined by
$\pi(m)=m+L$ for each $m\in M.\ $The rest follows from Theorem \ref{thom}.
\end{proof}

\begin{theorem}
\label{tmain}Let $M$ be an $A$-module and $P$ a proper submodule of $M.\ $The
following statements are equivalent.

(i)\ $P\ $is a classical 1-absorbing prime submodule.

(ii)\ For every nonunits $a,b,c\in A,\ (P:_{M}abc)=(P:_{M}ab)\cup(P:_{M}c).$

(iii) For every nonunits $a,b\in A$ and $m\in M$ with $abm\notin
P,\ (P:_{A}abm)=(P:_{A}m).$

(iv) For every nonunits $a,b\in A$,\ every proper ideal $I\ $of $A\ $and $m\in
M$ with $aIbm\subseteq P,\ $either $aIm\subseteq P\ $or $bm\in P.$

(v) For every nonunit $a\in A,\ $every proper ideal $I$ of $A$ and $m\in M$
with $aIm\nsubseteq P,\ (P:_{A}aIm)=(P:_{A}m).$

(vi)$\ \ $For every nonunit $a\in A,\ $every proper ideals $I,J\ $of $A\ $and
$m\in M\ $with $aIJm\subseteq P,\ $either $aIm\subseteq P$ or $Jm\subseteq P.$

(vii)\ For every proper ideals $I,J\ $of $A$ and $m\in M$ with $IJm\nsubseteq
P,\ (P:_{A}IJm)=(P:_{A}m).$

(viii) For every proper ideals $I,J,K\ $of $A$ and $m\in M$ with
$IJKm\subseteq P,$\ etiher $IJm\subseteq P$ or $Km\subseteq P.$

(ix) For every $m\in M-P,\ (P:_{A}m)$ is a 1-absorbing prime ideal of $A.$
\end{theorem}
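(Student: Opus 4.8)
The plan is to prove Theorem~\ref{tmain} by establishing a cycle of implications that funnels everything back to the definition. The most economical route is $(i)\Rightarrow(ii)\Rightarrow(iii)\Rightarrow(i)$ for the ``element-level'' equivalences, then bootstrap from the definition up to the ``ideal-level'' statements via an induction on the number of generators, handling $(iv)$–$(viii)$, and finally close with $(i)\Leftrightarrow(ix)$, which is essentially a translation of the definition of $1$-absorbing prime ideal of $A$ applied to the ideal $(P:_A m)$.

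First I would do the easy block. For $(i)\Rightarrow(ii)$: one inclusion $(P:_M ab)\cup(P:_M c)\subseteq(P:_M abc)$ is trivial since $abm\in P$ or $cm\in P$ each force $abcm\in P$ (using that $a,b,c$ act on $M$ and $P$ is a submodule); the reverse inclusion is exactly the definition. For $(ii)\Rightarrow(iii)$: fix nonunits $a,b$ and $m$ with $abm\notin P$; the inclusion $(P:_A m)\subseteq(P:_A abm)$ is clear, and for the converse take $c\in(P:_A abm)$. If $c$ is a unit then $abm\in P$, contradiction, so $c$ is a nonunit, and then $m\in(P:_M abc)=(P:_M ab)\cup(P:_M c)$; the first option contradicts $abm\notin P$, so $cm\in P$, i.e. $c\in(P:_A m)$. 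For $(iii)\Rightarrow(i)$: given $abcm\in P$ with $a,b,c$ nonunits, if $abm\in P$ we are done; otherwise $c\in(P:_A abm)=(P:_A m)$, so $cm\in P$.

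Next I would climb to the ideal statements. The engine is: $(i)\Rightarrow$ ``for nonunits $a,b$, a proper ideal $I$, and $m$ with $aIbm\subseteq P$, either $aIm\subseteq P$ or $bm\in P$'', which is statement $(iv)$. Suppose not: there is $x\in I$ with $axm\notin P$ and also $bm\notin P$. For every $y\in I$, the element $a(x+y)$ is a nonunit (a proper ideal contains no units, and sums of nonunits in $I$ land in $I$) wait—more carefully, $x+y\in I$ so $a(x+y)$ need not be handled as a product of two nonunits unless $a$ is a nonunit, which it is; then $ax$ and $ay$ are nonunits since they lie in $I$... actually the cleanest move is: $axbm\in P$ and $a y b m\in P$ for all $y\in I$, and we want to combine. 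I would use the standard ``prime-avoidance-style'' trick from the theory of classical $2$-absorbing submodules (e.g. the argument in \cite{X18}): from $axm\notin P$, $(iii)$ gives $(P:_A axm)=(P:_A m)\ni b$ wait that needs $ax$ expressed via nonunits $a,x$—legitimate since $x\in I$ is a nonunit—so $bm\in P$, contradiction; if instead $x$ or $a$ fails to be usable, peel off via the element case. Then $(iv)\Rightarrow(v)$ and $(iv)\Rightarrow(vi)$ follow by the same residual-manipulation as $(ii)\Leftrightarrow(iii)$, and $(vi)\Rightarrow(viii)$ is a further induction on the number of generators of the ideal playing the role of the single nonunit $a$: split off one generator at a time, using at the base case the already-proven $(vi)$, and absorbing ``unit generators'' trivially. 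Statement $(vii)$ is the residual reformulation of $(viii)$, proven exactly as $(ii)\Leftrightarrow(iii)$.

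Finally, $(i)\Leftrightarrow(ix)$: for $m\in M-P$, $(P:_A m)$ is a proper ideal of $A$; by \cite{X8}'s definition it is $1$-absorbing prime iff for all nonunits $a,b,c$ with $abc\in(P:_A m)$ we have $ab\in(P:_A m)$ or $c\in(P:_A m)$, and translating through the definition of the residual, $abc\in(P:_A m)\iff abcm\in P$, $ab\in(P:_A m)\iff abm\in P$, $c\in(P:_A m)\iff cm\in P$; so this holds for every $m\notin P$ exactly when $P$ is classical $1$-absorbing prime. I expect the main obstacle to be the induction in $(iv)$ and $(viii)$: one must be careful that when a generator of the ideal happens to be a unit the hypothesis $aIm\subseteq P$ (or $IJKm\subseteq P$) forces the ``easy'' conclusion immediately, and that at every inductive step the elements fed into the definition genuinely are nonunits—this is automatic for generators of a \emph{proper} ideal but must be stated. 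Everything else is the routine back-and-forth between the action of an element on $M$ and membership in a residual, already rehearsed in the $(ii)\Leftrightarrow(iii)$ step.
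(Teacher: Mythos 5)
Your core arguments are sound and in fact largely coincide with the paper's: the small cycle $(i)\Rightarrow(ii)\Rightarrow(iii)\Rightarrow(i)$ is the paper's opening, the translation $(i)\Leftrightarrow(ix)$ is the paper's closing (routed there through $(viii)$), and your engine for $(iv)$ --- pick $x\in I$ with $axm\notin P$, observe that $x$ is a nonunit because $I$ is proper, and feed the triple $(a,x,b)$ into the element-level statement --- is exactly the paper's step $(iii)\Rightarrow(iv)$. The residual manipulations for $(v)$--$(viii)$ are also essentially the paper's, modulo ordering: the paper runs one long cycle $(i)\Rightarrow(ii)\Rightarrow\cdots\Rightarrow(ix)\Rightarrow(i)$, handling $(vi)\Rightarrow(vii)$ by the swap $J^{\ast}=aA$ with $xIJ^{\ast}m\subseteq aIJm\subseteq P$, whereas you go $(vi)\Rightarrow(viii)$ directly and then read off $(vii)$.

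Two points need repair. First, and most importantly, your implication graph as described does not prove the equivalence: you derive $(iv)$--$(viii)$ \emph{from} $(i)$ but never give a path from any of them back to $(i)$ (or to $(ii)$, $(iii)$, $(ix)$). The return arrows are trivial --- e.g.\ $(viii)\Rightarrow(i)$ by taking $I=aA$, $J=bA$, $K=cA$, which are proper precisely because $a,b,c$ are nonunits, and $(v)\Rightarrow(iii)$ by taking $I=bA$ --- but without stating them the statements $(iv)$--$(viii)$ are only shown to be consequences of $(i)$, not equivalent to it. Second, ``induction on the number of generators'' is not an available tool here: the ideals $I,J,K$ are arbitrary proper ideals and need not be finitely generated. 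What actually works (and is what the paper implicitly does) is to quantify over all \emph{elements} of the proper ideal, each of which is automatically a nonunit: for $(vi)\Rightarrow(viii)$, every $a\in I$ satisfies $aJKm\subseteq P$, hence $aJm\subseteq P$ or $Km\subseteq P$; if $Km\nsubseteq P$ this forces $aJm\subseteq P$ for every $a\in I$ simultaneously, i.e.\ $IJm\subseteq P$. Relatedly, your worry about ``unit generators'' is vacuous, since a proper ideal contains no units; the place where a unit genuinely can appear is when you take $a\in(P:_{A}IJm)$ and want $aA$ proper, and there a unit $a$ immediately yields the excluded case $IJm\subseteq P$, exactly as you handled the unit $c$ in $(ii)\Rightarrow(iii)$.
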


\begin{proof}
$(i)\Rightarrow(ii):$ Suppose that $P$ is a classical 1-absorbing prime
submodule and choose nonunits $a,b,c\in A.$ Let $m\in$ $\left(  P:_{M}%
abc\right)  $. Then we have $abcm\in P\ $which implies that $abm\in P$ or
$cm\in P$.\ Thus, we conclude that $m\in\left(  P:_{M}ab\right)  \cup\left(
P:_{M}c\right)  ,$ that is, $(P:_{M}abc)\subseteq\left(  P:_{M}ab\right)
\cup\left(  P:_{M}c\right)  $. Since the reverse inclusion is always true, we
have the equality $\left(  P:_{M}abc\right)  =\left(  P:_{M}ab\right)
\cup\left(  P:_{M}c\right)  $.

$(ii)\Rightarrow(iii):\ $Let $abm\notin P$ for some nonunits $a,b\in A$ and
$m\in M$. Choose $c\in\left(  P:_{A}abm\right)  $. Then we have $abcm\in P,$
so by (ii) we conclude that $m\in\left(  P:_{M}abc\right)  =\left(
P:_{M}ab\right)  \cup\left(  P:_{M}c\right)  $. Since $abm\notin P,$ we have
$cm\in P$\ which implies that $c\in\left(  P:_{A}m\right)  $.\ Then we get
$\left(  P:_{A}abm\right)  \subseteq\left(  P:_{A}m\right)  .\ $Since the
reverse inclusion is always true, we have the equality $(P:_{A}abm)=(P:_{A}%
m).$

$(iii)\Rightarrow(iv):\ $Let $a,b\in A$ be two nonunits, $I$ be a proper ideal
of $A$ and $m\in M$ with $abIm\subseteq P$. Suppose that $aIm\nsubseteq
P.\ $Then we can find $x\in I$ such that $axm\notin P.\ $Since $abxm\in P,$ by
(iii), we have $m\in(P:_{M}axb)=(P:_{M}ax)\cup(P:_{M}b).\ $On the other hand,
as $axm\notin P,\ $we get $m\in(P:_{M}b)\ $which implies that $bm\in P.$

$(iv)\Rightarrow(v):\ $Let $a\in A$ be a nonunit, $I$ be a proper ideal of $A$
and $m\in M$ with $aIm\nsubseteq P$. Choose $b\in\left(  P:_{A}aIm\right)  $.
Then we have $abIm\subseteq P.$\ Hence by part (iv), we have $aIm\subseteq P$
or $bm\subseteq P$. Since $aIm\nsubseteq P$, we get $bm\in P$, that is,
$b\in\left(  P:_{A}m\right)  $. Consequently, we have $\left(  P:_{A}%
aIm\right)  =\left(  P:_{A}m\right)  $.

$(v)\Rightarrow(vi):\ $Let $a\in A$ be a nonunit, $I,J$ be two proper ideals
of $A$ and $m\in M.\ $Suppose that $aIJm\subseteq P$ and $aIm\nsubseteq P$.
Since $aIJm\subseteq P$ we have $J\subseteq$ $\left(  P:_{M}aIm\right)  $.
Then by part (v), we conclude that $J\subseteq\left(  P:_{A}m\right)  $ which
implies that $Jm\subseteq N$.

$(vi)\Rightarrow(vii):\ $Let $I,J$ be two proper ideals of $A\ $and $m\in M$
with $IJm\nsubseteq P.\ $Choose $a\in(P:_{A}IJm).\ $Then we have
$aIJm\subseteq P.\ $Since $IJm\nsubseteq P,\ $then there exists $x\in J$ such
that $Ixm\nsubseteq P.\ $Now, put $J^{\ast}=aA$ and note that $xIJ^{\ast
}m\subseteq aIJm\subseteq P.\ $Since $xIm\nsubseteq P,\ $again by part
(vi),\ we have $J^{\ast}m\subseteq P$ which implies that $am\in P,\ $that is,
$a\in(P:_{A}m).\ $Hence, we have $(P:_{A}IJm)=(P:_{A}m).$

$(vii)\Rightarrow(viii):\ $Let $I,J,K\ $be proper ideals of $A\ $and $m\in M$
with $IJKm\subseteq P.$\ Then by part (vii), we have $K\subseteq
(P:_{A}IJm)=(P:_{A}m)$ which implies that $Km\subseteq P.\ $

$(viii)\Rightarrow(ix):\ $Let $m\in M-P\ $and $a,b,c\in A$ be nonunits such
that $abc\in(P:_{A}m).\ $Then we have $abcm\in P.\ $Now, put $I=aA,\ J=bA\ $%
and $K=cA.\ $Then note that $IJKm\subseteq P.\ $By part (viii), we conclude
that $abm\in IJm\subseteq P$ or $cm\in Km\subseteq P.\ $Hence, we have
$ab\in(P:_{A}m)$ or $c\in(P:_{A}m).\ $Thus, $(P:_{A}m)$ is a 1-absorbing prime
ideal of $A.$

$(ix)\Rightarrow(i):\ $Let $abcm\in P$ for some nonunits $a,b,c\in A$ and
$m\in M.\ $Assume that $m\notin M.\ $Then by (ix), $(P:_{A}m)$ is a
1-absorbing prime ideal. Since $abc\in(P:_{A}m),\ $we conclude that
$ab\in(P:_{A}m)$ or $c\in(P:_{A}m).\ $Then we have $abm\in P$ or $cm\in
P\ $which completes the proof.
\end{proof}

\begin{corollary}
\label{cormain}Let $A$ be a ring and$\ I$ be a proper ideal of $A$. The
following statements are satisfied.

(i)$\ I$ is a classical 1-absorbing submodule of $A$-module $A$ if and only
if$\ I$ is a 1-absorbing prime ideal of $A$.

(ii)\ For every $A$-module $M$ and every proper submodule $P$ of $M,\ P\ $is a
classical 1-absorbing prime if and only if $A$ is a local ring with unique
maximal ideal $\mathfrak{m}$ of $A$ such that $\mathfrak{m}^{2}=(0).$
\end{corollary}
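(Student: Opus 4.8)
The plan is to handle the two parts separately, deriving everything from Theorem \ref{tmain}, Proposition \ref{pro2}, and the definitions, together with the elementary local-ring fact that $1+\mathfrak{m}$ consists of units.

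For part (i), the implication ``classical $1$-absorbing prime submodule of the $A$-module $A$ $\Rightarrow$ $1$-absorbing prime ideal'' is obtained by specializing $m=1$: if $abc\in I$ for nonunits $a,b,c$, then $abc\cdot 1\in I$ forces $ab\cdot 1\in I$ or $c\cdot 1\in I$. For the converse, suppose $abcm\in I$ with $a,b,c$ nonunits and $m\in A$. The key observation is that $cm$ is again a nonunit (otherwise $c$ would be a unit), so applying the $1$-absorbing prime hypothesis to the triple $a,b,cm$ yields $ab\in I$ or $cm\in I$, and in the first case $abm=(ab)m\in I$ since $I$ is an ideal. Alternatively, one may invoke Proposition \ref{pro2} after noting that over $A$ the notions of $1$-absorbing prime ideal and $1$-absorbing prime submodule of the $A$-module $A$ coincide.

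For part (ii), the ``if'' direction is immediate: if $(A,\mathfrak{m})$ is local with $\mathfrak{m}^{2}=(0)$, then for nonunits $a,b,c\in A$ we have $a,b\in\mathfrak{m}$, hence $ab\in\mathfrak{m}^{2}=(0)$, so $abm=0\in P$ for every $m\in M$ and every proper submodule $P$ of every $A$-module $M$; thus every such $P$ is classical $1$-absorbing prime. The substance is the ``only if'' direction, which I would carry out in three steps. \emph{Step 1:} applying the hypothesis to $M=A$ and using part (i), every proper ideal of $A$ is a $1$-absorbing prime ideal. \emph{Step 2:} $A$ is local. Indeed, if not, then by \cite{X8} every $1$-absorbing prime ideal of $A$ is prime, so every proper ideal of $A$ is prime; in particular $(0)$ is prime, so $A$ is a domain, and if $A$ were not a field one could pick a nonzero nonunit $a$ and note that $a\cdot a\in(a^{2})$ with $(a^{2})$ a proper, hence prime, ideal forces $a=a^{2}r$, i.e. $a(1-ar)=0$, so $a$ is a unit --- a contradiction; thus $A$ is a field, which is local, again a contradiction, so $A$ must be local. \emph{Step 3:} $\mathfrak{m}^{2}=(0)$. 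Suppose not and choose $x,y\in\mathfrak{m}$ with $xy\neq 0$; then $xy$ and $(xy)^{2}$ are nonunits, so $I:=(xy)^{2}A$ is a proper ideal, hence a $1$-absorbing prime ideal by Step 1. Applying this to the triple of nonunits $x,y,xy$, with $x\cdot y\cdot(xy)=(xy)^{2}\in I$, gives $xy\in I=(xy)^{2}A$, say $xy=(xy)^{2}r$, so $xy\bigl(1-(xy)r\bigr)=0$. Since $(xy)r\in\mathfrak{m}$, the element $1-(xy)r$ is a unit, whence $xy=0$, contradicting the choice of $x,y$. Therefore $\mathfrak{m}^{2}=(0)$.

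The main obstacle is the ``only if'' direction of (ii): establishing locality in Step 2 (where one must fall back on \cite{X8} and the ``every proper ideal prime $\Rightarrow$ field'' observation) and, in Step 3, hitting on the auxiliary ideal $(xy)^{2}A$, which is exactly rigid enough that the $1$-absorbing prime property applied to $(x,y,xy)$ collapses via a Nakayama-type unit argument. The remaining verifications --- that $cm$ and $(xy)^{2}$ are nonunits, that the relevant ideals are proper, and the bookkeeping with $(P:_{A}m)$ --- are routine.
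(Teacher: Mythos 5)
Your proof is correct. Part (i) and the ``if'' direction of (ii) follow essentially the same route as the paper: the forward implication of (i) is the specialization $m=1$ (the paper phrases it as Theorem \ref{tmain}(ix) applied to $(I:_A 1)=I$), and the converse is exactly the paper's argument of regrouping $abcm=a\cdot b\cdot(cm)$ after noting $cm$ is a nonunit. The genuine divergence is in the ``only if'' direction of (ii): the paper disposes of it in one line by citing \cite[Proposition 4.5]{Abdel} (every proper ideal $1$-absorbing prime iff $(A,\mathfrak{m})$ local with $\mathfrak{m}^{2}=(0)$), whereas you reprove that statement from scratch, using only the locality result of \cite{X8} plus two elementary observations: that ``every proper ideal prime'' forces $A$ to be a field via $a\in(a^{2})$, and that $\mathfrak{m}^{2}\neq(0)$ is killed by applying the $1$-absorbing property of $(xy)^{2}A$ to the triple $(x,y,xy)$ and a unit argument on $1-(xy)r$. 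Both steps check out (in Step 3 note that $\mathfrak{m}^{2}\neq(0)$ does yield a single product $xy\neq 0$, and $1-(xy)r$ is a unit because $A$ is local by Step 2). What your version buys is self-containedness --- the corollary no longer leans on an external proposition whose proof the reader must look up --- at the cost of about a paragraph of extra argument; the paper's version is shorter but opaque at exactly the point where the real content lies.
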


\begin{proof}
$(i):\ $Suppose that $I\ $is a classical 1-absorbing prime submodule of
$A$-module $A.\ $Then by Theorem \ref{tmain} (ix),$\ \left(  I:_{A}1\right)
=I$ is a 1-absorbing prime ideal of $A$.$\ $For the converse assume that
$I\ $is a 1-absorbing prime ideal of $A.\ $Let $abcx=ab(cx)\in I$ for some
nonunits $a,b,c\in A$ and $x\in A.\ $Since $I\ $is a 1-absorbing prime ideal,
we have $ab\in I$ or $cx\in I$ which implies that $abx\in I$ or $cx\in I.$

$(ii):\ $Assume that, for every $A$-module $M,\ $every proper submodule $P$ of
$M$ is a classical 1-absorbing prime submodule. Put $M=A\ $and apply (i). Then
every proper ideal of $A\ $is a 1-absorbing prime ideal. By \cite[Proposition
4.5]{Abdel}, $A$ is a local ring with unique maximal ideal $\mathfrak{m}$ of
$A$ such that $\mathfrak{m}^{2}=(0).$\ Now, assume that $A$ is a local ring
with unique maximal ideal $\mathfrak{m}$ of $A$ such that $\mathfrak{m}%
^{2}=(0).\ $Let $M$ be an $A$-module and $P$ a proper submodule of $M.\ $Let
$abcm\in P$ for some nonunits $a,b,c\in A$ and $m\in M.\ $Since $a,b\in
\mathfrak{m}$ and $\mathfrak{m}^{2}=(0),\ $we have $abm=0\in P.\ $Thus,
$P\ $is a classical 1-absorbing prime submodule of $M.\ $
\end{proof}

\begin{proposition}
\label{pro4} Let $M$ be an $A$-module and $\left\{  P_{i}:i\in\Delta\right\}
$ be a descending chain of classical 1-absorbing prime submodule of $M$.\ Then
$%
{\textstyle\bigcap\limits_{i\in\Delta}}
K_{i}$\ is a classical 1-absorbing prime submodule of $M$.
\end{proposition}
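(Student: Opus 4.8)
The plan is to write $Q=\bigcap_{i\in\Delta}P_i$ (the $K_i$ in the statement being a typo for $P_i$) and to check the definition of a classical $1$-absorbing prime submodule directly for $Q$. First, $Q$ is a proper submodule of $M$: it is contained in any fixed member $P_{i_0}$, which is proper, so $Q\neq M$. Hence only the absorbing condition needs verification.

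So suppose $abcm\in Q$ for some nonunits $a,b,c\in A$ and some $m\in M$, and assume $cm\notin Q$; I must show $abm\in Q$, that is, $abm\in P_i$ for every $i\in\Delta$. Since $cm\notin Q=\bigcap_{j\in\Delta}P_j$, there is an index $j_0\in\Delta$ with $cm\notin P_{j_0}$. Now fix an arbitrary $i\in\Delta$. Because $\{P_i:i\in\Delta\}$ is a chain, the submodules $P_i$ and $P_{j_0}$ are comparable; let $P_k$ (with $k\in\{i,j_0\}$) denote the smaller one, so that $P_k\subseteq P_i$ and $P_k\subseteq P_{j_0}$. From $P_k\subseteq P_{j_0}$ and $cm\notin P_{j_0}$ we get $cm\notin P_k$, while $abcm\in Q\subseteq P_k$. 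Since $P_k$ is a classical $1$-absorbing prime submodule, $abm\in P_k$ or $cm\in P_k$; the second is impossible, so $abm\in P_k\subseteq P_i$. As $i\in\Delta$ was arbitrary, $abm\in\bigcap_{i\in\Delta}P_i=Q$, and therefore $Q$ is a classical $1$-absorbing prime submodule of $M$.

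There is no genuine obstacle beyond the bookkeeping; the one point worth isolating is that "descending chain" should be read as the members being pairwise comparable, and it is precisely this comparability that lets us pass from an arbitrary $P_i$ to a possibly smaller member $P_k$ which still contains $abcm$ but omits $cm$, so that the hypothesis on $P_k$ can be applied. (One could instead argue via Theorem \ref{tmain}(ix) that $(Q:_A m)=\bigcap_{i\in\Delta}(P_i:_A m)$ is a $1$-absorbing prime ideal for each $m\in M\setminus Q$, but this would require extra care with those indices $i$ for which $m\in P_i$ and $(P_i:_A m)=A$ fails to be a proper ideal, so the direct argument above is the cleaner route.)
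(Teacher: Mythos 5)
Your proof is correct and follows essentially the same route as the paper's: both arguments exploit the pairwise comparability of the chain members, passing to the smaller of the two relevant submodules so that the classical $1$-absorbing prime hypothesis can be applied there; you merely run the case analysis with the roles of $abm$ and $cm$ interchanged (assuming $cm\notin Q$ and concluding $abm\in Q$, where the paper assumes $abm\notin\bigcap K_i$ and concludes $cm\in\bigcap K_i$). You also correctly identify and repair the statement's typographical slip ($K_i$ for $P_i$) and, unlike the paper, explicitly note the properness of the intersection.
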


\begin{proof}
Let $abcm\in$ $%
{\textstyle\bigcap\limits_{i\in\Delta}}
K_{i}$ for some nonunits $a,b,c\in A\ $and $m\in M$. Then we have $abcm\in
K_{i}\ $for each $i\in\Delta.\ $Assume that $abm\notin%
{\textstyle\bigcap\limits_{i\in\Delta}}
K_{i}$. Then there exists $t\in\Delta$ such that $abm\notin K_{t}.\ $Since
$abcm\in K_{t}\ $and $K_{t}\ $is a classical 1-absorbing prime submodule, we
have $cm\in K_{t}.\ $Now choose $i\in\Delta.\ $Then either $K_{t}\subseteq
K_{i}\ $or $K_{i}\subseteq K_{t}.\ $If $K_{t}\subseteq K_{i},$ then $cm\in
K_{i}.\ $If $K_{i}\subseteq K_{t},\ $then $abm\notin K_{i}\ $and $abcm\in
K_{i}.\ $Since $K_{i}\ $is a classical 1-absorbing prime submodule of $M,\ $we
have $cm\in K_{i}.\ $Then, we conclude that $cm\in%
{\textstyle\bigcap\limits_{i\in\Delta}}
K_{i}$ and so $%
{\textstyle\bigcap\limits_{i\in\Delta}}
K_{i}$\ is a classical 1-absorbing prime submodule of $M$.
\end{proof}

A classical 1-absorbing prime submodule $P\ $of $M$ is said to be a
\textit{minimal classical 1-absorbing prime submodule} if for every classical
1-absorbing prime submodule $K$ of $M$ such that $K\subseteq P$, then $K=P$.
Let $L$ be a classical 1-absorbing prime submodule of $M$. Now, set
$\Gamma=\left\{  K:K\text{ is a classical 1-absorbing prime submodule of
}M\text{ and }K\subseteq L\right\}  $. If $\left\{  K_{i}:i\in\Delta\right\}
$ is any chain in $\Gamma$, then $%
{\textstyle\bigcap\limits_{i\in\Delta}}
K_{i}\ $is in $\Gamma$ by Proposition \ref{pro4}. By Zorn's Lemma, there
exists a minimal member of $\Gamma\ $which is clearly a minimal classical
1-absorbing submodule of $M$. If $M$ \ is finitely generated or multiplication
module, then it is clear that$\ M$ contains a minimal classical 1-absorbing
prime submodule.

Recall from \cite{X17} that an $A$-module $M\ $is said to be a Noetherian if
its every submodule is finitely generated. Now, we will show that in a
Noetherian module, there exist finite number of minimal classical 1-absorbing
prime submodules.

\begin{theorem}
\label{tNoetherian}Let $M$ be a Noetherian $A$-module. Then $M$ contains a
finite number of minimal classical 1-absorbing prime submodules.
\end{theorem}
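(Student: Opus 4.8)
The standard way to prove a "finitely many minimal primes" statement in a Noetherian setting is by Noetherian induction on the poset of submodules, so the plan is to argue by contradiction using the ascending chain condition. Suppose the set $\Sigma$ of submodules $N$ of $M$ for which the module $M/N$ (equivalently, the set of classical $1$-absorbing prime submodules of $M$ containing $N$) admits infinitely many minimal classical $1$-absorbing prime submodules is nonempty; more precisely, let $\Sigma$ be the collection of submodules $N$ such that $N$ itself is contained in infinitely many minimal classical $1$-absorbing prime submodules of $M$. Since $M$ is Noetherian, every nonempty collection of submodules has a maximal element, so pick $N$ maximal in $\Sigma$. Note $N$ itself cannot be a classical $1$-absorbing prime submodule: if it were, then by the definition of "minimal" the only minimal classical $1$-absorbing prime submodule below any classical $1$-absorbing prime submodule containing $N$ that is $\subseteq N$ would reduce the count, and in fact $N$ would be the unique minimal one contained in $N$, contradicting infinitude. (One must phrase the bookkeeping here carefully; see below.)

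Because $N \in \Sigma$ is not a classical $1$-absorbing prime submodule, the definition fails: there are nonunits $a,b,c \in A$ and $m \in M$ with $abcm \in N$ but $abm \notin N$ and $cm \notin N$. Then $N + A(abm)$ and $N + A(cm)$ are both strictly larger than $N$, hence (by maximality of $N$ in $\Sigma$) each is contained in only finitely many minimal classical $1$-absorbing prime submodules of $M$. The key claim is that every minimal classical $1$-absorbing prime submodule $Q$ containing $N$ must contain $N + A(abm)$ or $N + A(cm)$: indeed $Q \supseteq N \ni abcm$, so $abcm \in Q$, and since $Q$ is classical $1$-absorbing prime we get $abm \in Q$ or $cm \in Q$, i.e. $Q \supseteq N + A(abm)$ or $Q \supseteq N + A(cm)$. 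Therefore the set of minimal classical $1$-absorbing prime submodules containing $N$ is the union of those containing $N + A(abm)$ and those containing $N + A(cm)$, hence finite — contradicting $N \in \Sigma$. Applying this with $N = (0)$ (or rather, observing $\Sigma = \varnothing$) yields that $M$ has only finitely many minimal classical $1$-absorbing prime submodules.

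The one genuine subtlety — and the step I expect to be the main obstacle — is the reduction in the first paragraph: I cannot directly say "$N$ is a classical $1$-absorbing prime submodule leads to a contradiction" without first pinning down the precise bookkeeping relating "minimal classical $1$-absorbing prime submodules of $M$ containing $N$" to "minimal classical $1$-absorbing prime submodules of $M$." The clean fix is to work throughout with the quantity
\[
f(N) = \#\{\, Q : Q \text{ is a minimal classical }1\text{-absorbing prime submodule of } M \text{ with } N \subseteq Q \,\},
\]
and to define $\Sigma = \{\, N \trianglelefteq M : f(N) = \infty \,\}$; the goal is $f((0)) < \infty$. If $\Sigma \neq \varnothing$, Noetherianity gives a maximal element $N$. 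If $N$ were itself a classical $1$-absorbing prime submodule, then any classical $1$-absorbing prime submodule $K \subseteq N$ would, being contained in $N$, also be contained in every $Q \supseteq N$; but one checks (using Proposition \ref{pro4} and Zorn's Lemma, exactly as in the paragraph preceding the theorem) that there is at least one minimal classical $1$-absorbing prime submodule $K_0 \subseteq N$, and then $f(N)$ counts precisely those $Q$ with $K_0 \subseteq Q \subseteq$ -- wait, this still needs care. The robust route avoiding this entirely: simply note that if $N$ is itself classical $1$-absorbing prime then $N$ contains a minimal one, say $K_0$, and every $Q$ with $N \subseteq Q$ satisfies $K_0 \subseteq Q$; minimality of $Q$ then forces $Q = K_0$, so $f(N) = 1 < \infty$, contradiction. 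Hence $N$ is not classical $1$-absorbing prime, and the argument of the second paragraph applies verbatim. The remaining checks — that $N + A(abm)$ and $N+A(cm)$ strictly contain $N$, and that submodules containing them are among those counted by $f$ — are routine.
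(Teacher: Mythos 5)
Your proposal is correct and follows essentially the same Noetherian-induction argument as the paper: take a maximal element of the family of ``bad'' submodules, rule out the case where it is classical $1$-absorbing prime, and otherwise use the failure of the definition to cover the minimal classical $1$-absorbing prime submodules above it by those above two strictly larger submodules. The only (harmless) differences are cosmetic: you track absolute minimal classical $1$-absorbing prime submodules of $M$ containing $N$ rather than minimal submodules of the quotients $M/N$, and you use the element-wise definition instead of the ideal-theoretic characterization of Theorem \ref{tmain}.
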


\begin{proof}
Suppose that the claim is not true. Let $\Gamma$ denote the collection of all
proper submodules $P$ of $M$ such that $M/P$ has an infinite number of minimal
classical 1-absorbing submodules. Since $0\in\Gamma,$ $\Gamma$ is nonempty.
Since $M$ is a Noetherian module, there exists a maximal element $Q$ of
$M.\ $If $Q$ is a classical 1-absorbing prime submodule, then by Corollary
\ref{cor1}, $Q/Q=(\overline{0})$ is the unique minimal classical 1-absorbing
prime submodule of $M/Q.\ $Then we have $Q\notin\Gamma,\ $a contradiction.
Thus $Q\ $is not a classical 1-absorbing prime submodule of $M.\ $Then by
Theorem \ref{tmain} (vii), there exist proper ideals $I,J,K$ of $A$ and $m\in
M$ such that $IJKm\subseteq Q$,\ $IJm\nsubseteq Q$ and $Km\nsubseteq
Q.\ $Since $Q\ $is the maximal element of $\Gamma,\ Q+IJm$ and $Q+Km$ are not
in $\Gamma$. This implies that $M/Q+IJm\ $and $M/Q+Km\ $have finite minimal
classical 1-absorbing prime submodules. Let $P^{\ast}/Q$ is a minimal
classical 1-absorbing submodule of $M/Q.\ $Then $IJKm\subseteq Q\subseteq
P^{\ast}$ and $P^{\ast}\ $is a classical 1-absorbing prime submodule, we have
either $IJm\subseteq P^{\ast}$ or $Km\subseteq P^{\ast}.\ $Then we conclude
that $P^{\ast}/(Q+IJm)$ is a minimal classical 1-absorbing prime submodule of
$M/Q+IJm$ or $P^{\ast}/(Q+Km)$ is a minimal classical 1-absorbing prime
submodule of $M/Q+Km.\ $This shows that there are finite number of choices for
$P^{\ast}/Q$. Thus, $M/Q$ has finite number of minimal classical 1-absorbing
prime submodules. This is again a contradiction. Hence $\Gamma=\emptyset$ and
$M$ contains a finite number of minimal classical 1-absorbing prime submodules.
\end{proof}

Recall that an $A$-module $M$ is a multiplication module if every submodule
$N$ of $M$ has the form $IM$ for some ideal $I$ of $A$. Let $N$ and $K$ be
submodules of a multiplication $A$-module $M$ with $N=I_{1}M$ and $K=I_{2}M$
for some ideals $I_{1}\ $and $I_{2}$ of $A$. Ameri in his paper \cite{X1},
defined the product of submodules of multiplication modules as follows: the
product of $N$ and $K$ denoted by $NK$ is defined by $NK:=I_{1}I_{2}M$. The
author showed that the product of $N$ and $K$ is independent of the
presentations $N$ and $K$\ (See \cite[Theorem 3.4]{X1}).

\begin{proposition}
Let $M$ be a multiplication $A$-module and $P$ be a proper submodule of $M$.
The following statements are equivalent.

(i) $P\ $is a classical 1-absorbing prime submodule of $M.$

(ii)\ If $KLNm\subseteq P$ for some proper submodules $K,L,N\ $of $M$ and
$m\in M$,\ then either $KLm\subseteq P$ or $Nm\subseteq P$.
\end{proposition}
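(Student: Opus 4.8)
The plan is to translate statement (ii), which is phrased in terms of Ameri's product of submodules of a multiplication module, into a statement about products of ideals, and then to read off the equivalence from the ideal-theoretic characterisations already proved in Theorem~\ref{tmain}. First I would record a short dictionary for the multiplication module $M$: every submodule $K$ satisfies $K=(K:_{A}M)M$; writing the cyclic submodule $Am$ as $((Am):_{A}M)M$ and using that Ameri's product is independent of the chosen presentations \cite{X1}, one obtains, for submodules $K,L,N$ and $m\in M$, the equalities $Nm=(N:_{A}M)m$, $KLm=(K:_{A}M)(L:_{A}M)m$ and $KLNm=(K:_{A}M)(L:_{A}M)(N:_{A}M)m$. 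Moreover $(K:_{A}M)$ is a proper ideal of $A$ whenever $K$ is a proper submodule of $M$, and for principal data $(aM)(bM)(cM)m=abcAm$, $(aM)(bM)m=abAm$ and $(cM)m=cAm$; since $P$ is a submodule, $abcm\in P$ already forces $rabcm\in P$ for every $r\in A$, so $(aM)(bM)(cM)m\subseteq P\Leftrightarrow abcm\in P$, $(aM)(bM)m\subseteq P\Leftrightarrow abm\in P$ and $(cM)m\subseteq P\Leftrightarrow cm\in P$.

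Granting this dictionary, $(i)\Rightarrow(ii)$ is immediate: given proper submodules $K,L,N$ of $M$ and $m\in M$ with $KLNm\subseteq P$, put $I_{1}=(K:_{A}M)$, $I_{2}=(L:_{A}M)$ and $I_{3}=(N:_{A}M)$, which are proper ideals of $A$ with $I_{1}I_{2}I_{3}m\subseteq P$; by Theorem~\ref{tmain}(viii) we get $I_{1}I_{2}m\subseteq P$ or $I_{3}m\subseteq P$, that is, $KLm\subseteq P$ or $Nm\subseteq P$.

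For $(ii)\Rightarrow(i)$ I would start from nonunits $a,b,c\in A$ and $m\in M$ with $abcm\in P$. In the main case, where $aM$, $bM$ and $cM$ are all proper submodules, I would apply (ii) with $K=aM$, $L=bM$, $N=cM$: the hypothesis $(aM)(bM)(cM)m\subseteq P$ holds because $abcm\in P$, and the conclusion $(aM)(bM)m\subseteq P$ or $(cM)m\subseteq P$ reads precisely $abm\in P$ or $cm\in P$. The degenerate cases would be treated using the standard fact that for a multiplication module $xM=M$ is equivalent to $xA+ann(M)=A$: in that situation $x$ acts as an $A$-automorphism of $M$ which stabilises $P$, so $x$ can be cancelled from $abcm\in P$. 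Hence if $cM=M$ one gets $abm\in P$ directly; if $aM=M$ (and symmetrically if $bM=M$) one reduces to $bcm\in P$ and to proving $bm\in P$ or $cm\in P$, splitting once more according to whether $bM$ or $cM$ equals $M$ and cancelling again whenever it does.

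The step I expect to be the real obstacle is the careful justification of the dictionary, and in particular of the identity $KLNm=(K:_{A}M)(L:_{A}M)(N:_{A}M)m$: this is not a formal computation, because $(IM:_{A}M)$ can be strictly larger than $I$, and it must be extracted from Ameri's independence theorem applied to the presentations $N=IM$ and $N=(N:_{A}M)M$. The secondary delicate point is the bookkeeping of the degenerate cases in $(ii)\Rightarrow(i)$, where (ii) cannot be invoked verbatim for a factor $xM$ that happens to be all of $M$. Once these are settled, the rest of the proof is a routine passage back and forth through Theorem~\ref{tmain}(viii).
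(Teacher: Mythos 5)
Your proof is correct and follows essentially the same route as the paper: both directions translate between Ameri's submodule products and ideal products via the presentations $K=(K:_{A}M)M$ and then invoke the ideal-theoretic characterization of Theorem \ref{tmain}. The only substantive difference is that you explicitly treat the degenerate case in $(ii)\Rightarrow(i)$ where a factor such as $aM$ (or $IM$) equals $M$ and so cannot be fed into (ii) as a \emph{proper} submodule --- a case the paper's own proof passes over in silence --- by cancelling that factor; note that the cancellation really rests on the per-element version $xM=M\Leftrightarrow xA+ann(m)=A$ for all $m\in M$ (from \cite{X7}) rather than on $xA+ann(M)=A$, but that is exactly what your argument needs.
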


\begin{proof}
$(i)\Rightarrow(ii):\ $Suppose that $P\ $is a classical 1-absorbing prime
submodule of $M$ and $KLNm\subseteq P$ for some proper submodules $K,L,N\ $of
$M$ and $m\in M.\ $Since $M\ $is multiplication, we can write
$K=IM,\ L=JM,\ N=QM$ for some proper ideals $I,J,Q\ $of $A.\ $Then note that
$IJQm\subseteq P$.\ As $P\ $is a classical 1-absorbing prime submodule, we
have either $IJm\subseteq P$ or $Qm\subseteq P.\ $Which implies that
$KLm\subseteq P$ or $Nm\subseteq P.\ $

$(ii)\Rightarrow(i):\ $Suppose that $IJQm\subseteq P$ for some proper ideals
$I,J,Q\ $of $A\ $and $m\in M.\ $Now, put $K=IM,\ L=JM,\ N=QM\ $and note that
$KLNm\subseteq P.\ $Thus we have $KLm\subseteq P$ or $Nm\subseteq P$. If
$KLm\subseteq P,\ $we have $IJ(Rm:M)M=IJ(Am)\subseteq P$ which implies that
$IJm\subseteq P.\ $If $Nm\subseteq P,\ $then similarly we have $Qm\subseteq
P.\ $Then by Theorem \ref{tmain}, $P\ $is a classical 1-absorbing prime
submodule of $M.$
\end{proof}

\begin{theorem}
\label{tmain2}Let $M$ be an $A$-module and $P$ be a proper submodule of
$M.$The following statements are equivalent.

(i)\ $P\ $is a classical 1-absorbing prime submodule of $M.\ $

(ii)\ For every nonunits $a,b,c\in A$, $(P:_{M}abc)=(P:_{M}ab)$ or
$(P:_{M}abc)=(P:_{M}c).$

(iii) For every nonunits $a,b,c\in A\ $and every submodule $L$ of
$M,\ abcL\subseteq P$ implies that $abL\subseteq P$ or $cL\subseteq P.$

(iv) For every nonunits $a,b\in A$ and every submodule $L$ of $M$ with
$abL\nsubseteq P,\ (P:_{A}abL)=(P:_{A}L).$

(v) For every nonunits $a,b\in A,$ every proper ideal $I$ of $A$ and every
submodule $L$ of $M$,\ $abIL\subseteq P$ implies that $abL\subseteq P$ or
$IL\subseteq P.$

(vi) For every nonunit $a\in A,\ $every proper ideal $I$ of $A\ $and every
submodule $L$ of $M\ $with $aIL\nsubseteq P,\ (P:_{A}aIL)=(P:_{A}L).$

(vii) For every nonunit $a\in A,\ $every proper ideals $I,J$ of $A$ and every
submodule $L$ of $M,\ aIJL\subseteq P$ implies either $aIL\subseteq P$ or
$JL\subseteq P.$

(viii) For every proper ideals $I,J\ $of $A$ and every submodule $L$ of
$M\ $with $IJL\nsubseteq P,\ (P:_{A}IJL)=(P:_{A}L).$

(ix) For every proper ideals $I,J,K$ of $A$ and every submodule $L$ of
$M,\ IJKL\subseteq P$ implies $IJL\subseteq P$ or $KL\subseteq P.\ $

(x)\ For every submodule $L$ of $M\ $which is not contained in $P,\ (P:_{A}L)$
is a 1-absorbing prime ideal of $A.$
\end{theorem}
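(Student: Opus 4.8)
The plan is to prove Theorem~\ref{tmain2} by establishing a cycle of implications, exactly mirroring the structure of the proof of Theorem~\ref{tmain}, but now with a free element $m$ replaced at each stage by an arbitrary submodule $L$. The natural route is $(i)\Rightarrow(iii)\Rightarrow(iv)\Rightarrow(v)\Rightarrow(vi)\Rightarrow(vii)\Rightarrow(viii)\Rightarrow(ix)\Rightarrow(x)\Rightarrow(i)$, and then to splice in the equivalence of $(ii)$ separately. Most of these arrows are bookkeeping: for instance, $(iv)\Rightarrow(v)$, $(vi)\Rightarrow(vii)$ and $(viii)\Rightarrow(ix)$ are obtained by taking $I=aA$ (resp.\ $J^{\ast}=aA$) just as in Theorem~\ref{tmain}, and the residual equalities in $(iv),(vi),(viii)$ follow from the containment statements in $(iii),(v),(vii)$ by choosing a witness scalar in the residual and feeding it back into the hypothesis.

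First I would handle $(i)\Rightarrow(iii)$, which is the only place the hypothesis on a single element must be upgraded to a statement about all of $L$. Suppose $abcL\subseteq P$ with $a,b,c$ nonunits and $abL\nsubseteq P$; pick $x\in L$ with $abx\notin P$. For any $y\in L$ we have $abc(x+y)\in P$ and $abcy\in P$; applying the single-element definition to $x+y$ (and to $x$) and using that $abx\notin P$ forces $ab(x+y)\notin P$, we must land in the second alternative, so $c(x+y)\in P$, hence $cy\in P$. Thus $cL\subseteq P$. (Equivalently, one can phrase this through part $(iii)$ of Theorem~\ref{tmain}: $(P:_{A}x)$ is $1$-absorbing prime for the fixed witness $x$ and $abc\in(P:_{A}x)\subseteq$ the relevant residual — but the direct argument above is cleanest.) The remaining implications down to $(ix)$ then proceed verbatim as in Theorem~\ref{tmain} with $Am$ replaced by $L$; I would write them compactly, pointing to the analogous steps.

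For $(ix)\Rightarrow(x)$: given $L\nsubseteq P$ and nonunits $a,b,c$ with $abc\in(P:_{A}L)$, set $I=aA$, $J=bA$, $K=cA$, so $IJKL\subseteq P$, and $(ix)$ gives $abL\subseteq IJL\subseteq P$ or $cL\subseteq KL\subseteq P$, i.e.\ $ab\in(P:_{A}L)$ or $c\in(P:_{A}L)$. For $(x)\Rightarrow(i)$: if $abcm\in P$ with $m\notin P$, then $L=Am\nsubseteq P$, so $(P:_{A}Am)$ is $1$-absorbing prime and contains $abc$, giving $ab\in(P:_{A}Am)$ or $c\in(P:_{A}Am)$, hence $abm\in P$ or $cm\in P$; if $m\in P$ the conclusion is immediate. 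Finally, to incorporate $(ii)$ I would show $(i)\Rightarrow(ii)$ and $(ii)\Rightarrow(iii)$: the first follows since $(P:_{M}abc)\subseteq(P:_{M}ab)\cup(P:_{M}c)$ together with the standard fact that a set contained in a union of two submodules but in neither must meet the argument used in Theorem~\ref{tmain}; more simply, from $(i)$ we already know $(P:_{M}abc)=(P:_{M}ab)\cup(P:_{M}c)$, and a union of two submodules equals one of them. For $(ii)\Rightarrow(iii)$, if $abcL\subseteq P$ then $L\subseteq(P:_{M}abc)$, which equals $(P:_{M}ab)$ or $(P:_{M}c)$, giving $abL\subseteq P$ or $cL\subseteq P$.

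I expect the only genuine content to be the step $(i)\Rightarrow(iii)$ (bootstrapping from a single element to a submodule), and the small lemma that a submodule contained in the union of two submodules is contained in one of them; everything else is a faithful transcription of Theorem~\ref{tmain}'s proof with $Am$ in place of $m$. The main thing to be careful about is keeping track of which ideals are required to be \emph{proper} (so that their generators are nonunits) versus arbitrary, since that is what lets the single-element $1$-absorbing hypothesis apply; this is exactly the same subtlety already navigated in Theorem~\ref{tmain}.
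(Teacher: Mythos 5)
Your overall plan coincides with the paper's: the same cycle $(i)\Rightarrow(ii)\Rightarrow(iii)\Rightarrow\cdots\Rightarrow(x)\Rightarrow(i)$, with the arrows from $(iii)$ through $(ix)$ being the proof of Theorem \ref{tmain} transcribed with a submodule $L$ in place of the element $m$, and $(x)\Rightarrow(i)$ reducing back to Theorem \ref{tmain} via $L=Am$. The one genuinely new point --- upgrading the single-element definition to a statement about all of $L$ --- is handled in the paper exactly as in your ``more simply'' remark: by Theorem \ref{tmain}, $(P:_{M}abc)=(P:_{M}ab)\cup(P:_{M}c)$, and since a module is never the union of two proper submodules, this forces $(P:_{M}abc)$ to equal one of the two, which is $(ii)$; then $(ii)\Rightarrow(iii)$ is immediate. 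So the proposal is essentially the paper's proof.

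The one thing to fix is the direct argument you say you prefer for $(i)\Rightarrow(iii)$: the assertion that $abx\notin P$ forces $ab(x+y)\notin P$ is false in general (take $y=-x$, or any $y$ with $aby\notin P$ for which the two terms cancel modulo $P$). The argument is repaired by splitting on whether $aby\in P$. If $aby\in P$, then $ab(x+y)\in P$ would give $abx=ab(x+y)-aby\in P$, a contradiction, so indeed $ab(x+y)\notin P$ and the definition applied to $abc(x+y)\in P$ yields $c(x+y)\in P$; combined with $cx\in P$ (obtained by applying the definition to $abcx\in P$), this gives $cy\in P$. If instead $aby\notin P$, apply the definition directly to $abcy\in P$ to get $cy\in P$. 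With this patch --- or by simply routing through $(ii)$ as you also propose and as the paper does --- the proof is complete. (A minor bookkeeping remark: in the chain $(iii)$ through $(ix)$ it is the arrows $(v)\Rightarrow(vi)$ and $(vii)\Rightarrow(viii)$ that need the auxiliary principal ideal $I^{\ast}=bA$ or $J^{\ast}=aA$, while $(iv)\Rightarrow(v)$, $(vi)\Rightarrow(vii)$ and $(viii)\Rightarrow(ix)$ are direct applications of the preceding residual equality; but these are exactly the routine steps you defer to Theorem \ref{tmain}.)
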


\begin{proof}
$(i)\Rightarrow(ii):\ $Follows from Theorem \ref{tmain}\ $(i)\Rightarrow(ii).$

$(ii)\Rightarrow(iii):\ $Since $abcL\subseteq P,\ $we have $L\subseteq
(P:_{M}abc).\ $The rest follows from (ii).

$(iii)\Rightarrow(iv):\ $Let $abL\nsubseteq P$ for some nonunits nonunits
$a,b\in A$ and some submodule $L$ of $M.\ $Let $c\in(P:_{A}abL).\ $Then $c$ is
nonunit and $abcL\subseteq P.\ $Then by $(iii),\ $we have $cL\subseteq P$
which implies that $c\in(P:_{A}L).\ $Hence, we obtain$\ (P:_{A}abL)=(P:_{A}L)$

$(iv)\Rightarrow(v):\ $Suppose that $abIL\subseteq P$ for some nonunits
$a,b\in A$, some proper ideal$\ I$ of $A\ $and some submodule $L$ of $M.\ $We
may assume that $abL\nsubseteq P.\ $Then by part (iv), we have $I\subseteq
(P:_{A}abL)=(P:_{A}L)$, that is, $IL\subseteq P.\ $

$(v)\Rightarrow(vi):\ $Let $a\in A$ be a nonunit, $I\ $be a proper ideal of
$A\ $and $L$ be a submodule of $M\ $with $aIL\nsubseteq P.\ $Then there exists
$x\in I$ such that $axL\nsubseteq P.\ $Let $b\in(P:_{A}aIL).\ $Then
$abIL\subseteq P.\ $Now, put $I^{\ast}=bA.\ $Then $I^{\ast}\ $is a proper
ideal and $axI^{\ast}L\subseteq abIL\subseteq P.\ $Since $axL\nsubseteq
P,\ $by (v), we have $bL\subseteq I^{\ast}L\subseteq P\ $which implies that
$b\in(P:_{A}L).$

$(vi)\Rightarrow(vii):\ $Let $aIJL\subseteq P$ for some nonunit $a\in
A,\ $some proper ideals $I,J\ $of $A\ $and some submodule $L$ of $M.\ $Assume
that $aIL\nsubseteq P.\ $Then by part (vi), we have $J\subseteq(P:_{A}%
aIL)=(P:_{A}L)$ which implies that $JL\subseteq P.\ $

$(vii)\Rightarrow(viii):\ $Let $I,J\ $be two proper ideals of $A\ $and $L$ be
a submodule of $M$ with $IJL\nsubseteq P.\ $Then there exists $x\in J$\ such
that $xIL\nsubseteq P.\ $Let $a\in(P:_{A}IJL).\ $Then we have $aIJL\subseteq
P.\ $Now, put $J^{\ast}=aA\ $and note that $xIJ^{\ast}L\subseteq aIJL\subseteq
P.\ $Since $xIL\nsubseteq P,\ $by part (vii),\ we have $aL\subseteq J^{\ast
}L\subseteq P$ which implies that $a\in(P:_{A}L).$

$(viii)\Rightarrow(ix):\ $Let $IJKL\subseteq P$ for some proper ideals $I,J,K$
of $A$ and some submodule $L$ of $M.$\ If $IJL\subseteq P,\ $then there is
nothing to do. So assume that $IJL\nsubseteq P.\ $Since $IJKL\subseteq P,\ $by
part (viii), we have $K\subseteq(P:_{A}IJL)=(P:_{A}L)\ $which implies that
$KL\subseteq P.\ $

$(ix)\Rightarrow(x):\ $Assume that $abc\in(P:_{A}L)$ for some nonunits
$a,b,c\in A.\ $Now, put $I=aA,\ J=bA\ $and $K=cA.\ $Then note that
$IJKL\subseteq P.\ $Then by part (ix), we have $abL\subseteq IJL\subseteq P$
or $cL\subseteq KL\subseteq P.\ $Then we have $ab\in(P:_{A}L)$ or $c\in
(P:_{A}L).\ $

$(x)\Rightarrow(i):\ $Take $L=Am$ for every $m\in M-P$. Then by part
(x),\ $(P:_{A}L)=(P:_{A}m)$ is a 1-absorbing prime ideal. The rest follows
from Theorem \ref{tmain} $(xi)\Rightarrow(i).$
\end{proof}

\begin{proposition}
Let $M\ $be an $A$-module an $P\ $be a classical 1-absorbing prime submodule
of $M.\ $Then for every nonunits $a,b,c\in A$ and $m\in M,\ (P:_{A}%
abcm)=(P:_{A}abm)\cup(P:_{A}cm).\ $In this case, either $(P:_{A}%
abcm)=(P:_{A}abm)$ or $(P:_{A}abcm)=(P:_{A}cm).$
\end{proposition}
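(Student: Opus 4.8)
The plan is to establish the equality $(P:_{A}abcm)=(P:_{A}abm)\cup(P:_{A}cm)$ by proving both inclusions, and then to read off the dichotomy from the elementary fact that an ideal which is a union of two ideals must equal one of them. First I would dispose of the inclusion $(P:_{A}abm)\cup(P:_{A}cm)\subseteq(P:_{A}abcm)$, which holds for an arbitrary submodule $P$ with no further hypothesis: if $x\,abm\in P$ then $x\,abcm=c(x\,abm)\in P$, and if $x\,cm\in P$ then $x\,abcm=ab(x\,cm)\in P$, since $P$ is closed under scalar multiplication.

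For the reverse inclusion I would take $x\in(P:_{A}abcm)$, so $x\,abcm\in P$, and exploit the following observation: because $a$ is a nonunit, the element $xa$ is again a nonunit (if $xav=1$ for some $v\in A$, then $a(xv)=1$, contradicting that $a$ is a nonunit). Hence $xa,b,c$ are nonunits and $(xa)\,b\,c\,m=x(abcm)\in P$, so the defining property of a classical $1$-absorbing prime submodule gives $(xa)\,b\,m=x(abm)\in P$ or $cm\in P$. In the first case $x\in(P:_{A}abm)$; in the second $cm\in P$ forces $x\,cm\in P$, i.e. $x\in(P:_{A}cm)$. Either way $x\in(P:_{A}abm)\cup(P:_{A}cm)$, which completes the equality. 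One could alternatively derive this by applying Theorem~\ref{tmain}(ii) to the nonunits $xa,b,c$, but the direct argument above is shorter and self-contained.

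Finally I would note that $(P:_{A}abcm)$, $(P:_{A}abm)$ and $(P:_{A}cm)$ are ideals of $A$, and an ideal that is the union of two ideals coincides with one of them: if there were $x\in(P:_{A}abcm)\setminus(P:_{A}abm)$ and $y\in(P:_{A}abcm)\setminus(P:_{A}cm)$, then by the equality just proved $x\in(P:_{A}cm)$ and $y\in(P:_{A}abm)$, and $x+y\in(P:_{A}abcm)=(P:_{A}abm)\cup(P:_{A}cm)$ would put $x$ or $y$ into the ideal it was excluded from, a contradiction. Hence $(P:_{A}abcm)=(P:_{A}abm)$ or $(P:_{A}abcm)=(P:_{A}cm)$. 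I do not expect a real obstacle here; the only step that requires a moment's care is recognizing that $xa$ stays a nonunit, which is precisely what lets the definition of classical $1$-absorbing prime submodule be applied directly to $xa,b,c$.
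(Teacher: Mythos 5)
Your proof is correct and follows essentially the same route as the paper: prove the nontrivial inclusion by a direct appeal to the defining property of a classical $1$-absorbing prime submodule, then invoke the fact that an ideal contained in a union of two ideals lies in one of them. The only (cosmetic) difference is that you absorb the factor $x$ into the ring element, applying the definition to the nonunits $xa,b,c$ and $m$ (which requires your observation that $xa$ stays a nonunit), whereas the paper absorbs $x$ into the module element and applies the definition to $a,b,c$ and $xm\in M$, avoiding that extra remark.
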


\begin{proof}
Let $a,b,c\in A$ be nonunits and $m\in M.$ Choose $x\in(P:_{A}abcm).\ $Then we
have $abc(xm)\in P$ which implies that $abxm\in P$ or $cxm\in P.\ $Then we
conclude that $x\in(P:_{A}abm)\cup(P:_{A}cm).\ $The rest follows from the fact
that if an ideal is contained in a union of two ideals, then it must be
contained in one of them.
\end{proof}

\begin{theorem}
\label{tmult}Let $M$ be a multiplication $A$-module and $P$ a proper submodule
of $M.\ $The following statements are equivalent.

(i) $P\ $is a 1-absorbing prime submodule of $M.$

(ii)\ $P\ $is a classical 1-absorbing prime submodule of $M.$

(iii)\ $(P:_{A}M)\ $is a 1-absorbing prime ideal of $A.$
\end{theorem}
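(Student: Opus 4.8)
The plan is to prove the cycle $(i)\Rightarrow(ii)\Rightarrow(iii)\Rightarrow(i)$. The first implication is free: it is exactly Proposition \ref{pro2}, which says every $1$-absorbing prime submodule is classical $1$-absorbing prime, and it does not even use the multiplication hypothesis. So the real content is in the other two implications, and the multiplication hypothesis will be essential there.

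For $(ii)\Rightarrow(iii)$, I would argue as follows. Assume $P$ is classical $1$-absorbing prime. Since $M$ is a multiplication module and $P$ is proper, we have $P=(P:_A M)M$ with $(P:_A M)$ a proper ideal. Take nonunits $a,b,c\in A$ with $abc\in(P:_A M)$; the goal is $ab\in(P:_A M)$ or $c\in(P:_A M)$. Then $abcM\subseteq P$, and using the product of submodules (in the sense of Ameri recalled just before Theorem \ref{tmmain2}), one can write this as $abc$ acting on the whole module. Applying Theorem \ref{tmain2}(iii) with $L=M$: from $abcM\subseteq P$ we get $abM\subseteq P$ or $cM\subseteq P$, i.e. $ab\in(P:_A M)$ or $c\in(P:_A M)$, which is precisely what $1$-absorbing primeness of $(P:_A M)$ requires. (One should double-check that $M$ itself counts as an admissible "submodule $L$" in Theorem \ref{tmain2}(iii) — it does, since the statement is about every submodule $L$ of $M$, with no properness restriction.)

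For $(iii)\Rightarrow(i)$, assume $(P:_A M)$ is a $1$-absorbing prime ideal of $A$. Let $a,b,c\in A$ be nonunits and $m\in M$ with $abcm\in P$. Because $M$ is a multiplication module, the cyclic submodule $Am$ has the form $Am=(Am:_A M)M$, so $abcm\in P$ gives $abc\,(Am:_A M)M\subseteq P$, hence $abc\,(Am:_A M)\subseteq(P:_A M)$. Now I would like to feed the nonunits $a,b,c$ together with an element of $(Am:_A M)$ into the $1$-absorbing prime ideal $(P:_A M)$; the cleanest route is to invoke the ideal-theoretic characterizations already available — note that $(P:_A M)$ being $1$-absorbing prime means (by the ideal analogue of Theorem \ref{tmain}, or directly by Corollary \ref{cormain}(i) applied to $A$ as an $A$-module) that $abc\cdot J\subseteq(P:_A M)$ for a proper ideal $J$ forces $ab\cdot J\subseteq(P:_A M)$ or $c\cdot J\subseteq(P:_A M)$ — apply this with $J=(Am:_A M)$ if this ideal is proper. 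Multiplying back out by $M$ yields $abm\in P$ or $cm\in P$.

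The main obstacle is the degenerate case in $(iii)\Rightarrow(i)$ where $(Am:_A M)$ is not a proper ideal, i.e. $(Am:_A M)=A$. In that situation $Am=M$, so $P$ would have to contain $abcM$, i.e. $abc\in(P:_A M)$; then $1$-absorbing primeness of $(P:_A M)$ directly gives $ab\in(P:_A M)$ or $c\in(P:_A M)$, hence $abm\in abM\subseteq P$ or $cm\in cM\subseteq P$, and we are done. One should also handle the case $m\in P$ (trivial) and the case where the generated ideals $aA$, etc., equal $A$ (impossible, as $a$ is a nonunit), so no extra care is needed there. Assembling the three implications closes the cycle and proves the equivalence.
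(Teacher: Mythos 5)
Your cycle never actually returns to (i), so the equivalence is not proved. In the step you label $(iii)\Rightarrow(i)$ you conclude, after multiplying back out by $M$, that ``$abm\in P$ or $cm\in P$''. That is the defining condition of a \emph{classical} $1$-absorbing prime submodule, i.e.\ statement (ii); statement (i) requires the strictly stronger conclusion ``$ab\in(P:_{A}M)$ or $cm\in P$''. (The distinction is the whole point of the theorem: the paper's example $\mathbb{Z}_{p}\oplus\mathbb{Q}$ shows the two notions differ in general modules.) So what you have established is $(i)\Rightarrow(ii)\Rightarrow(iii)\Rightarrow(ii)$, and nothing in your argument implies (i). The source of the loss is your choice of dichotomy for the $1$-absorbing prime ideal $(P:_{A}M)$: from $abc\,(Am:_{A}M)\subseteq(P:_{A}M)$ you invoke ``$abJ\subseteq(P:_{A}M)$ or $cJ\subseteq(P:_{A}M)$'' with $J=(Am:_{A}M)$, and the first branch only yields $ab\,Am\subseteq P$, not $ab\in(P:_{A}M)$.

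The fix is small. Since $J=(Am:_{A}M)$ is proper, every $x\in J$ is a nonunit, and $abcx\in(P:_{A}M)$ can be regrouped as $a\cdot b\cdot(cx)$; the definition of a $1$-absorbing prime ideal then gives $ab\in(P:_{A}M)$ or $cx\in(P:_{A}M)$ for each such $x$, hence either $ab\in(P:_{A}M)$ (done) or $cJ\subseteq(P:_{A}M)$, which gives $cm\in cJM=c\,Am\subseteq P$. This is essentially what the paper does, in the form: $IJL\subseteq P$ with $I,J$ proper and $L=KM$ forces $IJ\subseteq(P:_{A}M)$ or $L\subseteq P$, after which it cites the characterization of $1$-absorbing prime submodules in \cite[Theorem 2.8]{X11} to land on (i). Your steps $(i)\Rightarrow(ii)$ and $(ii)\Rightarrow(iii)$ are fine and agree with the paper (note that $(ii)\Rightarrow(iii)$ needs no multiplication hypothesis, contrary to your remark), and your handling of the degenerate case $Am=M$ is correct --- indeed there you do obtain $ab\in(P:_{A}M)$, which only highlights that the main case should have been argued the same way.
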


\begin{proof}
$(i)\Rightarrow(ii):\ $Follows from Proposition \ref{pro2}.

$(ii)\Rightarrow(iii):\ $Follows from Theorem \ref{tmain2}.

$(iii)\Rightarrow(ii):\ $Let $IJL\subseteq P$ for some proper ideals $I,J\ $of
$A\ $and some submodule $L$ of $M.\ $Since $M\ $is multiplication module, we
can write $L=KM$ for some ideal $K\ $of $A.$If $K=A,\ $then we have
$IJL=IJM\subseteq P$ which implies that $IJ\subseteq(P:_{A}M).$\ So assume
that $K$ is a proper ideal. Since $IJL=IJKM\subseteq P$, we have
$IJK\subseteq(P:_{A}M).\ $As $(P:_{A}M)\ $is a 1-absorbing prime ideal of
$A,\ $either $IJ\subseteq(P:_{A}M)\ $or $K\subseteq(P:_{A}M).\ $If
$IJ\subseteq(P:_{A}M),$ then we are done. So assume that $K\subseteq
(P:_{A}M).\ $This implies that $L=KM\subseteq P,$ so by \cite[Theorem
2.8]{X11}, $P\ $is a 1-absorbing prime submodule of $M.$
\end{proof}

\begin{definition}
\label{dmultiplicatively}Let $M$ be an $A$-module and $S$ be a subset of
$M-\{0\}$. If for all proper ideals $I,J,K$ of $M$ and all submodules $L,N$ of
$M$; $\left(  N+IJL\right)  \cap S\neq\varnothing$ and $\left(  N+KL\right)
\cap S\neq\varnothing$ implies $\left(  N+IJKL\right)  \cap S\neq\varnothing$,
then $S$ is said to be a classical 1-absorbing prime m-closed set.
\end{definition}

\begin{proposition}
\label{pro9}Let $M$ be an $A$-module and$\ P$ a proper submodule of $M$. Then
$P$ is a classical 1-absorbing prime submodule if and only if $S=M-P$ is a
classical 1-absorbing prime m-closed set.
\end{proposition}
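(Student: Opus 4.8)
The plan is to prove the biconditional by translating the defining condition of a classical 1-absorbing prime submodule (in the ideal/submodule form given by Theorem \ref{tmain2}) directly into the complementary language of the set $S = M - P$. Recall from Theorem \ref{tmain2}(ix) that $P$ is classical 1-absorbing prime if and only if for all proper ideals $I,J,K$ of $A$ and every submodule $L$ of $M$, the inclusion $IJKL \subseteq P$ forces $IJL \subseteq P$ or $KL \subseteq P$. The idea is that ``a submodule $X$ is not contained in $P$'' is exactly ``$X \cap S \neq \varnothing$'', so the contrapositive of Theorem \ref{tmain2}(ix) reads: if $IJL \nsubseteq P$ and $KL \nsubseteq P$ then $IJKL \nsubseteq P$; equivalently, $(IJL)\cap S \neq \varnothing$ and $(KL)\cap S \neq \varnothing$ imply $(IJKL)\cap S\neq\varnothing$. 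This is precisely Definition \ref{dmultiplicatively} with the submodule $N$ taken to be the zero submodule, so one direction will essentially be a matter of unwinding notation.

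For the forward direction, suppose $P$ is classical 1-absorbing prime and let $I,J,K$ be proper ideals of $A$ and $L,N$ submodules of $M$ with $(N+IJL)\cap S\neq\varnothing$ and $(N+KL)\cap S\neq\varnothing$. I want to conclude $(N+IJKL)\cap S\neq\varnothing$. Here one must be slightly careful because the definition allows a genuinely nonzero ``shift'' submodule $N$. The clean way is to pass to the quotient module $M/N$ and the submodule $P' = (P+N)/N$: by Corollary \ref{cor1} (more precisely, by the homomorphism results of Theorem \ref{thom} applied to $\pi: M \to M/N$), either $P+N = M$ — in which case one checks the conclusion holds trivially since $N + IJKL \subseteq M = P+N$ would make the hypotheses themselves contradictory, so this case cannot arise when $S$ meets both sets — or $P'$ is a classical 1-absorbing prime submodule of $M/N$. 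Then the hypotheses say $IJL \cdot (M/N) \nsubseteq$... more carefully, $(N+IJL)\cap S \neq \varnothing$ means $IJ\bar L \nsubseteq P'$ in $M/N$ where $\bar L = (L+N)/N$, and likewise $K\bar L \nsubseteq P'$; applying Theorem \ref{tmain2}(ix) to $P'$ (contrapositive) gives $IJK\bar L \nsubseteq P'$, i.e. $(N + IJKL)\cap S \neq\varnothing$. Hence $S$ is a classical 1-absorbing prime m-closed set.

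For the converse, suppose $S = M - P$ is a classical 1-absorbing prime m-closed set. To show $P$ is classical 1-absorbing prime I use Theorem \ref{tmain2}(ix): let $I,J,K$ be proper ideals of $A$ and $L$ a submodule of $M$ with $IJKL \subseteq P$, and suppose toward a contradiction that $IJL \nsubseteq P$ and $KL \nsubseteq P$. Taking $N = 0$ (the zero submodule) in Definition \ref{dmultiplicatively}, the hypotheses $IJL \nsubseteq P$ and $KL \nsubseteq P$ say exactly $(0 + IJL)\cap S \neq\varnothing$ and $(0+KL)\cap S\neq\varnothing$, so the m-closed property yields $(0+IJKL)\cap S \neq\varnothing$, i.e. $IJKL \nsubseteq P$, contradicting $IJKL \subseteq P$. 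Therefore $IJL \subseteq P$ or $KL \subseteq P$, and by Theorem \ref{tmain2} $P$ is a classical 1-absorbing prime submodule of $M$.

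The main obstacle I anticipate is the bookkeeping around the auxiliary submodule $N$ in the forward direction: the characterizations in Theorem \ref{tmain2} are stated without such a shift term, so one has to either argue directly that the condition ``$(N+X)\cap S \neq\varnothing$'' is equivalent to ``$\bar X \nsubseteq (P+N)/N$ in $M/N$'' and invoke Corollary \ref{cor1}, or alternatively absorb $N$ by noticing $(N+X)\cap(M-P) \neq\varnothing \iff N+X \nsubseteq P$ and then working with $N+X$ as the relevant submodule throughout. Either route is routine once set up correctly, but it is the one place where a careless argument could slip. (One should also double-check the harmless typo in Definition \ref{dmultiplicatively} — ``proper ideals $I,J,K$ of $M$'' should read ``of $A$'' — and that the degenerate case $P+N = M$ is handled, as indicated above, by observing it is incompatible with the hypotheses.)
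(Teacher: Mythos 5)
Your converse direction (take $N=(0)$ and apply the contrapositive of the m-closed condition) is exactly the paper's argument and is correct. The forward direction, however, has a genuine gap. Your main route passes to $M/N$ and asserts that $P'=(P+N)/N$ is classical 1-absorbing prime by Corollary \ref{cor1} or Theorem \ref{thom}; neither result gives this, since Corollary \ref{cor1} requires the submodule you quotient by to be contained in $P$, and Theorem \ref{thom}(ii) requires $P$ to contain $Ker(\pi)=N$, whereas here $N$ is arbitrary. The claim is in fact false: take $A=\mathbb{Z}$, $M=\mathbb{Z}^{2}$, $P=0\times 3\mathbb{Z}$ (classical prime, hence classical 1-absorbing prime) and $N=2\mathbb{Z}\times 0$. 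Then $P+N=2\mathbb{Z}\times 3\mathbb{Z}\neq M$ and $(P+N:_{\mathbb{Z}}(1,1))=6\mathbb{Z}$, which is not a 1-absorbing prime ideal of $\mathbb{Z}$ (as $2\cdot 2\cdot 3\in 6\mathbb{Z}$ while $4\notin 6\mathbb{Z}$ and $3\notin 6\mathbb{Z}$); so by Theorem \ref{tmain}(ix) the submodule $P+N$ is not classical 1-absorbing prime, and by Corollary \ref{cor1} neither is $(P+N)/N$. Your translation of the hypotheses into the quotient is also incorrect: $(N+IJL)\cap S\neq\varnothing$ says $N+IJL\nsubseteq P$, which is not equivalent to $IJ\bar{L}\nsubseteq (P+N)/N$ (the latter says $IJ(L+N)\nsubseteq P+N$); for instance, when $N\nsubseteq P$ the first condition holds automatically while the second can fail.

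The intended argument is much shorter and avoids quotients entirely; it is essentially the ``absorb $N$'' alternative you mention at the end but do not carry out. Suppose toward a contradiction that $(N+IJKL)\cap S=\varnothing$, i.e. $N+IJKL\subseteq P$. Since $N$ and $IJKL$ are each contained in $N+IJKL$, this forces $N\subseteq P$ and $IJKL\subseteq P$. By Theorem \ref{tmain2}, either $IJL\subseteq P$ or $KL\subseteq P$, and adding $N\subseteq P$ back gives $N+IJL\subseteq P$ or $N+KL\subseteq P$, contradicting the hypotheses. Note in particular that the case $N\nsubseteq P$ is not a degenerate case to be excluded: it simply makes the conclusion automatic, since then $N+IJKL\supseteq N$ already meets $S$.
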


\begin{proof}
Suppose that $P$ is classical 1-absorbing submodule of $M$ and $I,J,K$ are
proper ideals of $A$ and $N,L$ are submodules of $M$ such that $\left(
N+IJL\right)  \cap S\neq\varnothing$ and $\left(  N+KL\right)  \cap
S\neq\varnothing$ where $S=M-N$. Now, we will show that $\left(
N+IJKL\right)  \cap S\neq\varnothing.\ $Suppose to the contrary. Then we have
$N+IJKL\subseteq P$ which implies that $N\subseteq P$ and $IJKL\subseteq P$.
As $P\ $is a classical 1-absorbing prime submodule, by Theorem \ref{tmain2},
we have $IJL\subseteq P$ or $KL\subseteq P.\ $Then we conclude that $\left(
N+IJL\right)  \cap S=\emptyset$ or $\left(  N+KL\right)  \cap S=\emptyset$
which both of them are contradictions. Thus $S=M-P$ is a classical 1-absorbing
prime m-closed set. For the converse, assume that $S=M-P$ is a classical
1-absorbing prime m-closed set and $IJKL\subseteq P$ for some proper ideals
$I,J,K\ $of $A$ and some submodule $L$ of $M$. Choose $N=(0).\ $Then note that
$N+IJKL=IJKL\subseteq P,\ $that is, $(N+IJKL)\cap S=\emptyset.\ $Then we have
either $(N+IJL)\cap S=\emptyset$ or $(N+KL)\cap S=\emptyset\ $since $S=M-P$ is
a classical 1-absorbing prime m-closed set. This implies that $IJL\subseteq P$
or $KL\subseteq P.\ $Then by Theorem \ref{tmain2}, $P$ is a classical
1-absorbing prime submodule.
\end{proof}

\begin{theorem}
\label{tkrull}(\textbf{Krull seperation lemma for classical 1-absorbing prime
submodules) }Let $M$ be an $A$-module and $S$ be a classical 1-absorbing prime
m-closed set. Then the set of all submodules of $M$ which are disjoint from
$S$ has at least one maximal element. Any such maximal element is a classical
1-absorbing prime submodule.
\end{theorem}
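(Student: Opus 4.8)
The plan is to argue in two stages, mirroring the classical Krull separation argument. First, I would establish the existence of a maximal element among the submodules of $M$ disjoint from $S$ by a routine Zorn's Lemma argument. Let $\Sigma = \{N \le M : N \cap S = \varnothing\}$. Since $S \subseteq M-\{0\}$, the zero submodule $(0)$ belongs to $\Sigma$, so $\Sigma \neq \varnothing$. Given a chain $\{N_\alpha\}$ in $\Sigma$, its union $\bigcup_\alpha N_\alpha$ is again a submodule of $M$ (an ascending union of submodules), and it is still disjoint from $S$ because any element of the union lies in some $N_\alpha$, hence outside $S$. So every chain has an upper bound in $\Sigma$, and Zorn's Lemma yields a maximal element $P \in \Sigma$.

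The heart of the proof is the second stage: showing that any such maximal $P$ is a classical $1$-absorbing prime submodule. By Proposition \ref{pro9}, it suffices to show that $M - P$ is a classical $1$-absorbing prime m-closed set. So let $I,J,K$ be proper ideals of $A$ and $L,N$ submodules of $M$ with $(N + IJL)\cap (M-P) \neq \varnothing$ and $(N + KL)\cap (M-P) \neq \varnothing$; I must show $(N + IJKL)\cap (M-P) \neq \varnothing$. Arguing by contradiction, suppose $(N+IJKL) \cap (M-P) = \varnothing$, i.e. $N + IJKL \subseteq P$. The idea is to use the maximality of $P$: the submodules $N+IJL$ and $N+KL$ both meet $M-P$, so neither is contained in $P$, hence each strictly contains $P$ only if... — here I need to be careful, since $N+IJL$ need not contain $P$. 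Instead I would invoke the m-closed property of $S = M-P$ itself in a different direction, or better, directly use maximality as follows: consider the submodules $P + (N+IJL) = P + N + IJL$ and $P + N + KL$. Each of these properly contains $P$ (since each meets $M-P$), so by maximality of $P$ in $\Sigma$, each must meet $S = M-P$ — but that is automatic and gives nothing. The correct route is to observe that $S = M - P$ is m-closed by hypothesis, and apply the defining implication of Definition \ref{dmultiplicatively} \emph{to $S$ itself}: since $(N+IJL)\cap S \neq \varnothing$ and $(N+KL)\cap S \neq \varnothing$, the m-closedness of $S$ gives $(N+IJKL)\cap S \neq \varnothing$, contradicting $N+IJKL \subseteq P$. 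Then Proposition \ref{pro9} finishes the argument.

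Wait — that last paragraph reveals the real subtlety, and I would restructure accordingly. The genuine content is: \emph{a maximal element of $\Sigma$ has complement that is m-closed in the sense of Definition \ref{dmultiplicatively}}, and this is what must be proven from scratch (one cannot assume $M-P$ is m-closed; that is the conclusion). So the actual argument for the second stage is the maximality argument: suppose $N + IJKL \subseteq P$ while $(N+IJL)\cap(M-P)$ and $(N+KL)\cap(M-P)$ are both nonempty. From $N + IJKL \subseteq P$ we get $N \subseteq P$. Then $N + IJL$ and $N+KL$ are submodules containing $N$; since each meets $M-P$, neither is contained in $P$, so $P \subsetneq P + IJL$ is false in general — rather, I should use that $P$ is maximal \emph{disjoint from $S$}, so any submodule strictly larger than $P$ meets $S$. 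Consider $(P :_M IJ) \cap$ appropriate combinations; concretely, I would take the submodule $P + IJKm$-type enlargements. The cleanest execution: since $(N+IJL)\cap S\neq\varnothing$ and $N\subseteq P$, pick $s_1 \in IJL$ with $s_1 \notin P$; similarly $s_2 \in KL$ with $s_2 \notin P$. The submodule $P + As_1$ strictly contains $P$, hence meets $S$ — automatic. The productive move is instead: $IJKL \subseteq P$, and I want to derive from maximality that $IJL \subseteq P$ or $KL \subseteq P$, contradicting the choice of $s_1, s_2$. This is exactly the statement that $P$ satisfies Theorem \ref{tmain2}(ix), i.e. $P$ is classical $1$-absorbing prime — which is circular unless I use maximality properly.

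I expect the main obstacle to be precisely this: translating "maximal disjoint from an m-closed set" into the $1$-absorbing prime property without circularity. The resolution I would commit to: assume for contradiction $P$ is \emph{not} classical $1$-absorbing prime; by Theorem \ref{tmain2}(ix) there are proper ideals $I,J,K$ and $m \in M-P$ with $IJKm \subseteq P$, $IJm \not\subseteq P$, $Km \not\subseteq P$. Then $P + IJ(Am)$ and $P + K(Am)$ each strictly contain $P$, so by maximality each meets $S$: there exist $x \in (P + IJ(Am)) \cap S$ and $y \in (P + K(Am)) \cap S$. Apply the m-closed property of $S$ with $N := P$, $L := Am$, and ideals $I J, K$ (formally using that $(P + IJ(Am))\cap S \neq \varnothing$ and $(P+K(Am))\cap S\neq\varnothing$): this yields $(P + IJK(Am))\cap S \neq \varnothing$. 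But $IJKm \subseteq P$ gives $P + IJK(Am) = P$, which is disjoint from $S$ — contradiction. Hence $P$ is classical $1$-absorbing prime. This completes the proof, and the only delicate point is checking that Definition \ref{dmultiplicatively} applies verbatim with this choice of data (three ideals degenerating to the pair $IJ, K$ plays the role of "$I,J,K$" there, with the first two multiplied — which is permissible since products of proper ideals are proper only if... one must note $IJ$ or $K$ could fail to be proper, but if $IJ = A$ then $IJm = m \cdot A \not\subseteq P$ forces $m \notin P$ consistent, and $Km \subseteq IJKm \subseteq P$, contradicting $Km \not\subseteq P$; similarly $K = A$ is excluded — so in the relevant case both are proper).
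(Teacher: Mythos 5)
Your final committed argument is correct and is essentially the paper's own proof: Zorn's Lemma gives a maximal $P$ disjoint from $S$, and then maximality forces $(P+IJL)\cap S\neq\varnothing$ and $(P+KL)\cap S\neq\varnothing$ whenever $IJL\nsubseteq P$ and $KL\nsubseteq P$, so the m-closedness of $S$ applied with $N=P$ yields $(P+IJKL)\cap S=P\cap S\neq\varnothing$, a contradiction. The several false starts before that point, and the closing worry about $IJ$ being improper, are unnecessary --- the definition is applied directly with the three proper ideals $I,J,K$ and $L=Am$, exactly as in the paper (which uses a general submodule $L$ via Theorem \ref{tmain2}).
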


\begin{proof}
Let $\Gamma=\left\{  K:K\text{ is submodule of }M\text{ and }K\cap
S=\emptyset\right\}  $. Note that $L=(0)\in\Gamma$ and $\Gamma$ is nonempty.
Since $(\Gamma,\subseteq)$ is a partially ordered set, by using Zorn's lemma,
we have at least one maximal element $P\ $of $\Gamma$. Now, we will show $P$
is classical 1-absorbing prime submodule. To prove this, take some proper
ideals $I,J,K$ of $A$ and some submodule $L$ of $M\ $such that $IJKL\subseteq
P.\ $Assume that $IJL\nsubseteq P$ and $KL\nsubseteq P$. By the maximality of
$P,$ we get $\left(  IJL+P\right)  \cap S\neq\emptyset$ and $\left(
KL+P\right)  \cap S\neq\emptyset$. Since $S$ is a classical 1-absorbing primem
m-closed set, we have $\left(  IJKL+P\right)  \cap S\neq\emptyset$. Hence
$P\cap S\neq\varnothing$, which is a contradiction. Therefore we have
$IJL\subseteq P\ $or $KL\subseteq P$. Thus, by Theorem \ref{tmain2},$P$ is a
classical 1-absorbing prime submodule of $M.$
\end{proof}

Recall from \cite{Mac} that an $A$-module $F$ is said to be a flat $A$-module
if for each exact sequence $K\rightarrow L\rightarrow M$ of $A$-modules, the
sequence $F\otimes K\rightarrow F\otimes L\rightarrow F\otimes M$ is also
exact. Also, $F\ $is said to be a faithfully flat the sequence $K\rightarrow
L\rightarrow M\ $is exact if and only if $F\otimes K\rightarrow F\otimes
L\rightarrow F\otimes M$ is exact. Azizi in \cite[Lemma 3.2]{X4} showed that
if $M\ $is an $A$-module, $P\ $is a submodule of $M\ $and $F\ $is a flat
$A$-module, then $(F\otimes P:_{F\otimes M}a)=F\otimes(P:_{M}a)$ for every
$a\in A.$

\begin{theorem}
\label{theo6} Let $M\ $be an $A$-module and $F\ $be a flat $A$-module. If $P$
is a classical 1-absorbing prime submodule of $M$ such that $F\otimes P\neq
F\otimes M$, then $F\otimes P$ is a classical 1-absorbing prime submodule of
$F\otimes M$.
\end{theorem}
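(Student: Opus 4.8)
The plan is to reduce everything to the characterization of classical 1-absorbing prime submodules via residuals, using the flatness fact of Azizi quoted just before the statement, namely $(F\otimes P:_{F\otimes M}a)=F\otimes(P:_M a)$ for every $a\in A$. Since $P$ is a proper submodule and the hypothesis guarantees $F\otimes P\neq F\otimes M$, the submodule $F\otimes P$ of $F\otimes M$ is proper, so it makes sense to check the classical 1-absorbing prime condition. I would use criterion (ii) of Theorem \ref{tmain}: it suffices to show that for all nonunits $a,b,c\in A$ one has
\[
(F\otimes P:_{F\otimes M}abc)=(F\otimes P:_{F\otimes M}ab)\cup(F\otimes P:_{F\otimes M}c).
\]

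First I would apply Azizi's lemma three times, with the single ring elements $abc$, $ab$, and $c$ respectively, to rewrite the three residuals as $F\otimes(P:_M abc)$, $F\otimes(P:_M ab)$, and $F\otimes(P:_M c)$. Next, since $P$ is classical 1-absorbing prime in $M$, Theorem \ref{tmain}(ii) gives $(P:_M abc)=(P:_M ab)\cup(P:_M c)$ as submodules of $M$, and in fact (by the Proposition following Theorem \ref{tmain2}, or directly since a submodule contained in the union of two submodules lies in one of them) this union is actually equal to either $(P:_M ab)$ or $(P:_M c)$. Without loss of generality say $(P:_M abc)=(P:_M ab)$. Tensoring the equality of submodules with the flat module $F$ — here flatness is exactly what makes $F\otimes(-)$ preserve the inclusion and hence the equality of submodules — yields $F\otimes(P:_M abc)=F\otimes(P:_M ab)$, and therefore
\[
(F\otimes P:_{F\otimes M}abc)=(F\otimes P:_{F\otimes M}ab)\subseteq (F\otimes P:_{F\otimes M}ab)\cup (F\otimes P:_{F\otimes M}c).
\]
Since the reverse inclusion $(F\otimes P:_{F\otimes M}ab)\cup(F\otimes P:_{F\otimes M}c)\subseteq(F\otimes P:_{F\otimes M}abc)$ always holds, we get the desired equality, and Theorem \ref{tmain}(ii) then shows $F\otimes P$ is a classical 1-absorbing prime submodule of $F\otimes M$.

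The only delicate point is the step "tensoring an equality of submodules with $F$ preserves it": what one really needs is that if $N_1\subseteq N_2$ are submodules of $M$ then $F\otimes N_1\hookrightarrow F\otimes N_2$ and the image is the submodule usually denoted $F\otimes N_1$ inside $F\otimes M$; this is precisely the content of flatness applied to the inclusion $N_1\hookrightarrow N_2$ composed with $N_2\hookrightarrow M$, and it is already implicitly used by Azizi's lemma and throughout this style of argument, so I would invoke it without much fanfare. I expect this to be the main (and essentially only) obstacle; everything else is bookkeeping with the equivalent conditions of Theorem \ref{tmain}. One should also remember to note at the outset why $F\otimes P$ is a genuine submodule of $F\otimes M$ and is proper — the properness is exactly the hypothesis $F\otimes P\neq F\otimes M$, and the identification of $F\otimes P$ with a submodule of $F\otimes M$ is flatness applied to $P\hookrightarrow M$.
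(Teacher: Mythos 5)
Your proposal is correct and follows essentially the same route as the paper: the paper also reduces to the residual criterion (using Theorem \ref{tmain2}(ii), i.e.\ $(P:_{M}abc)=(P:_{M}ab)$ or $(P:_{M}abc)=(P:_{M}c)$, which is exactly the ``one of the two'' form you derive from the union in Theorem \ref{tmain}(ii)) and then transfers the equality through Azizi's formula $(F\otimes P:_{F\otimes M}a)=F\otimes(P:_{M}a)$. The step you flag as delicate is in fact immediate here, since once the two residuals are equal as submodules of $M$ their tensors with $F$ coincide trivially, all the flatness content being already packaged in Azizi's lemma.
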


\begin{proof}
Let $a,b,c\in A\ $be nonunits. Then by Theorem \ref{tmain2}, $\left(
P:_{M}abc\right)  =\left(  P:_{M}ab\right)  $ or $\left(  P:_{M}abc\right)
=\left(  P:_{M}c\right)  $. Assume that $\left(  P:_{M}abc\right)  =\left(
P:_{M}ab\right)  $. Then by \cite[Lemma 3.2]{X4} , $\left(  F\otimes
P:_{F\otimes M}abc\right)  =F\otimes\left(  P:_{M}abc\right)  =F\otimes\left(
P:_{M}ab\right)  =\left(  F\otimes P:_{F\otimes M}ab\right)  $. If $\left(
P:_{M}abc\right)  =\left(  P:_{M}c\right)  ,\ $then $\left(  F\otimes
P:_{F\otimes M}abc\right)  =F\otimes\left(  P:_{M}abc\right)  =F\otimes\left(
P:_{M}c\right)  =\left(  F\otimes P:_{F\otimes M}c\right)  .\ $Again by
Theorem \ref{tmain2}, $F\otimes P$ is a classical 1-absorbing prime submodule
of $F\otimes M$.
\end{proof}

\begin{theorem}
\label{ttensor}Let $M\ $be an $A$-module and $F\ $be a faithfully flat
$A$-module. Then $P$ is a classical 1-absorbing prime submodule of $M$ if and
only if $F\otimes P$ is a classical 1-absorbing prime submodule of $F\otimes
M$.
\end{theorem}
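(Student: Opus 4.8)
The plan is to prove Theorem \ref{ttensor} by combining Theorem \ref{theo6} with the extra leverage provided by faithful flatness. The forward direction is immediate: if $P$ is a classical 1-absorbing prime submodule of $M$, then in particular $P \neq M$, and since $F$ is faithfully flat the natural map $F\otimes P \to F\otimes M$ is injective with $F\otimes P \neq F\otimes M$ (otherwise tensoring the exact sequence $0 \to P \to M \to M/P \to 0$ would force $M/P = 0$, contradicting $P \subsetneq M$); then Theorem \ref{theo6} applies verbatim to give that $F\otimes P$ is classical 1-absorbing prime in $F\otimes M$.

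For the converse, suppose $F\otimes P$ is a classical 1-absorbing prime submodule of $F\otimes M$. First I would check that $P$ is proper: if $P = M$ then $F\otimes P = F\otimes M$, which is not proper, a contradiction. Now fix nonunits $a,b,c \in A$. The strategy is to verify condition (ii) of Theorem \ref{tmain2}, namely that $(P:_M abc) = (P:_M ab)$ or $(P:_M abc) = (P:_M c)$. Since $F\otimes P$ is classical 1-absorbing prime, Theorem \ref{tmain2} (ii) gives $(F\otimes P :_{F\otimes M} abc) = (F\otimes P :_{F\otimes M} ab)$ or $(F\otimes P :_{F\otimes M} abc) = (F\otimes P :_{F\otimes M} c)$. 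Using Azizi's identity $(F\otimes P :_{F\otimes M} x) = F\otimes (P:_M x)$ (from \cite[Lemma 3.2]{X4}, valid for flat $F$), the first alternative reads $F\otimes (P:_M abc) = F\otimes (P:_M ab)$. Since $(P:_M ab) \subseteq (P:_M abc)$ always, I would look at the short exact sequence $0 \to (P:_M ab) \to (P:_M abc) \to (P:_M abc)/(P:_M ab) \to 0$; tensoring with the faithfully flat module $F$ and using that $F\otimes(P:_M ab) \to F\otimes(P:_M abc)$ is injective with equal image, we get $F\otimes\bigl((P:_M abc)/(P:_M ab)\bigr) = 0$, and faithful flatness then forces $(P:_M abc)/(P:_M ab) = 0$, i.e. $(P:_M abc) = (P:_M ab)$. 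The second alternative is handled identically with $c$ in place of $ab$. Hence $P$ satisfies Theorem \ref{tmain2} (ii) and is therefore a classical 1-absorbing prime submodule of $M$.

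The only genuine subtlety — and the step I expect to require the most care — is the passage "$F\otimes Q = 0 \Rightarrow Q = 0$" and the compatibility of $\otimes F$ with the relevant submodule inclusions; both rest on faithful flatness (exactness plus faithfulness), so I would state once at the outset that for a faithfully flat $F$ and any $A$-module $Q$ one has $Q = 0 \iff F\otimes Q = 0$, and that $\otimes F$ preserves injections. Everything else is bookkeeping with Azizi's formula and the two characterizations already established in Theorem \ref{tmain} and Theorem \ref{tmain2}. No new computation beyond these is needed.
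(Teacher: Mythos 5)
Your proposal is correct and takes essentially the same route as the paper: both directions rest on Theorem \ref{theo6}, the characterization in Theorem \ref{tmain2}(ii), and Azizi's identity $(F\otimes P:_{F\otimes M}x)=F\otimes(P:_{M}x)$, with faithful flatness used to descend from $F\otimes(P:_{M}abc)=F\otimes(P:_{M}ab)$ to $(P:_{M}abc)=(P:_{M}ab)$. The only cosmetic difference is that the paper expresses this last step by reflecting the exact sequence $0\rightarrow(P:_{M}ab)\rightarrow(P:_{M}abc)\rightarrow0$, whereas you phrase it as $F\otimes\left((P:_{M}abc)/(P:_{M}ab)\right)=0$ forcing the quotient to vanish --- the same argument.
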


\begin{proof}
Let $P$ be a classical 1-absorbing prime submodule of $M$ and assume $F\otimes
P=F\otimes M$. Then $0\rightarrow F\otimes P\overset{\subseteq}{\rightarrow
}F\otimes M\rightarrow0$ is an exact sequence. Since $F$ is a faithfully flat
$A$-module, $0\rightarrow P\overset{\subseteq}{\rightarrow}M\rightarrow0$ is
an exact sequence. This implies that $P=M$, which is a contradiction. So
$F\otimes P\neq F\otimes M$. Then $F\otimes P$ is a classical 1-absorbing
prime submodule by Theorem \ref{theo6}. For the converse, let $F\otimes P$ is
a classical 1-absorbing prime submodule of $F\otimes M$. Then note that
$F\otimes P\neq F\otimes M$ and so $P\neq M$ . Let $a,b,c\in A$ be nonunits.
Then $\left(  F\otimes P:_{F\otimes M}abc\right)  =\left(  F\otimes
P:_{F\otimes M}ab\right)  $ or $\left(  F\otimes P:_{F\otimes M}abc\right)
=\left(  F\otimes P:_{F\otimes M}c\right)  $ by Theorem \ref{tmain2}. Assume
that $\left(  F\otimes P:_{F\otimes M}abc\right)  =\left(  F\otimes
P:_{F\otimes M}ab\right)  $. Hence $F\otimes\left(  P:_{M}ab\right)  =\left(
F\otimes P:_{F\otimes M}ab\right)  =\left(  F\otimes P:_{F\otimes
M}abc\right)  =F\otimes\left(  P:_{M}abc\right)  $. This implies that
$0\rightarrow F\otimes\left(  P:_{M}ab\right)  \overset{\subseteq}%
{\rightarrow}F\otimes\left(  P:_{M}abc\right)  \rightarrow0$ is exact
sequence. Since $F$ is a faithfully flat $A$-module $0\rightarrow\left(
P:_{M}ab\right)  \overset{\subseteq}{\rightarrow}\left(  P:_{M}abc\right)
\rightarrow0$ is an exact sequence which implies that $\left(  P:_{M}%
ab\right)  =\left(  P:_{M}abc\right)  $. In other case, one can similarly show
that $\left(  P:_{M}c\right)  =\left(  P:_{M}abc\right)  .\ $Consequently, $P$
is a classical 1-absorbing prime submodule of $M$ by Theorem \ref{tmain2}.
\end{proof}

\begin{corollary}
Let $M\ $be an $A$-module and $X$ be an indeterminate. If $P$ is a classical
1-absorbing prime submodule of $M$, then $P\left[  X\right]  $ is a classical
1-absorbing prime submodule of $M\left[  X\right]  $.
\end{corollary}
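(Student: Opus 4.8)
The plan is to realize $M[X]$ as a tensor product and then quote Theorem~\ref{ttensor}. First I would recall the canonical isomorphism of $A$-modules $M[X]\cong A[X]\otimes_A M$, $mX^n\mapsto X^n\otimes m$. Since $A[X]$ is a free $A$-module (with basis the monomials $X^n$, $n\ge 0$), it is flat, so the natural map $A[X]\otimes_A P\to A[X]\otimes_A M$ is injective with image precisely the polynomials all of whose coefficients lie in $P$; under the isomorphism above that image is $P[X]$. Thus the pair $(M[X],P[X])$ is identified with $(A[X]\otimes_A M,\,A[X]\otimes_A P)$. Moreover $A[X]$, being free over $A$ with a nonempty basis, is a \emph{faithfully} flat $A$-module.

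With $F:=A[X]$ in hand the rest is immediate. Since $P$ is proper in $M$ and $F$ is faithfully flat, $F\otimes_A P\neq F\otimes_A M$, that is $P[X]\neq M[X]$. Applying Theorem~\ref{ttensor} to the faithfully flat module $F=A[X]$: as $P$ is a classical $1$-absorbing prime submodule of $M$, $F\otimes_A P=P[X]$ is a classical $1$-absorbing prime submodule of $F\otimes_A M=M[X]$. (Alternatively one may invoke the flat-module version, Theorem~\ref{theo6}; faithful flatness is then used only to ensure $P[X]\neq M[X]$, which here is clear anyway.)

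I expect no genuine obstacle in this argument; the only point requiring care is the module structure. Theorem~\ref{ttensor} deals with the $A$-module $F\otimes_A M$ and with nonunits of $A$, so the statement is to be read with $M[X]$ viewed as an $A$-module. I would flag explicitly that the stronger assertion — $P[X]$ classical $1$-absorbing prime over the larger ring $A[X]$ — does not follow this way (the unit group of $A[X]$ is in general strictly larger than that of $A$, so the condition on nonunits changes), so the corollary should be understood as a statement about $A$-scalars. Once the identification in the first paragraph is set up, the citation of Theorem~\ref{ttensor} with $F=A[X]$ finishes the proof.
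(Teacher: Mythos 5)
Your argument is essentially the paper's own proof: both identify $M[X]$ with $A[X]\otimes_A M$ and $P[X]$ with $A[X]\otimes_A P$, note that $A[X]$ is a flat (indeed free, hence faithfully flat) $A$-module, and invoke Theorem~\ref{theo6} (or its faithfully flat counterpart, Theorem~\ref{ttensor}). Your extra care in checking $A[X]\otimes P\neq A[X]\otimes M$ and in flagging that the conclusion concerns the $A$-module structure of $M[X]$ rather than its $A[X]$-module structure is a welcome refinement, but the route is the same.
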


\begin{proof}
Assume that $P$ is a classical 1-absorbing prime submodule of $M$. Note that
$A\left[  X\right]  $ is a flat $A$-module. So by Theorem \ref{theo6},
$A\left[  X\right]  \otimes P$ is a classical 1-absorbing prime submodule of
$A\left[  X\right]  \otimes M$. Since $A\left[  X\right]  \otimes P\simeq
P\left[  X\right]  $ and $A\left[  X\right]  \otimes M\simeq M\left[
X\right]  ,\ P\left[  X\right]  $ is a classical 1-absorbing prime submodule
of $M\left[  X\right]  $.
\end{proof}

\begin{proposition}
\label{quotient}Let $M$\ be an $A$-module, $P$ be a classical 1-absorbing
prime submodule of $M\ $and $S$ be a multiplicatively closed subset of $A$
such that $\left(  P:_{A}M\right)  \cap S=\emptyset.\ $Then $S^{-1}P$ is a
classical 1-absorbing prime submodule of $S^{-1}M$.
\end{proposition}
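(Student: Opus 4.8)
The plan is to verify the defining condition of a classical 1-absorbing prime submodule directly in $S^{-1}M$, after first disposing of the properness of $S^{-1}P$. For properness, I would localize the exact sequence $0\to P\to M\to M/P\to 0$ to get $0\to S^{-1}P\to S^{-1}M\to S^{-1}(M/P)\to 0$, so that $S^{-1}P=S^{-1}M$ would force $S^{-1}(M/P)=0$; under the hypothesis $(P:_AM)\cap S=\emptyset$ this is impossible, since an element of $S$ annihilating $M/P$ would lie in $ann(M/P)=(P:_AM)$. Thus $S^{-1}P\subsetneq S^{-1}M$. I would also record the elementary fact that if $a/s\in S^{-1}A$ is a nonunit then $a$ is a nonunit of $A$: for if $a$ were a unit in $A$, then $a/s=(a/1)(1/s)$ would be a unit in $S^{-1}A$.

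For the main step, take nonunits $\alpha,\beta,\gamma\in S^{-1}A$ and an element $x\in S^{-1}M$ with $\alpha\beta\gamma x\in S^{-1}P$, and write $\alpha=a/s_{1}$, $\beta=b/s_{2}$, $\gamma=c/s_{3}$, $x=m/s$. By the observation above, $a,b,c$ are nonunits in $A$. The relation $(abcm)/(s_{1}s_{2}s_{3}s)\in S^{-1}P$ yields, after clearing denominators, an element $u\in S$ with $abc(um)\in P$. Since $P$ is a classical 1-absorbing prime submodule of $M$ and $a,b,c$ are nonunits of $A$, we get $ab(um)\in P$ or $c(um)\in P$. In the first case $\alpha\beta x=(abm)/(s_{1}s_{2}s)=\bigl(ab(um)\bigr)/(s_{1}s_{2}su)\in S^{-1}P$; in the second case $\gamma x=(cm)/(s_{3}s)=\bigl(c(um)\bigr)/(s_{3}su)\in S^{-1}P$. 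Hence $S^{-1}P$ is a classical 1-absorbing prime submodule of $S^{-1}M$.

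The only genuinely delicate point is the properness of $S^{-1}P$, which is exactly what the hypothesis $(P:_AM)\cap S=\emptyset$ is there to guarantee; the remaining steps are a routine clearing-of-denominators argument together with the fact that nonunits of $S^{-1}A$ have nonunit numerators. As a cleaner alternative to the hands-on computation, one can invoke Theorem \ref{tmain2}: since $s_{1}s_{2}s_{3}$ is a unit in $S^{-1}A$, we have $(S^{-1}P:_{S^{-1}M}\alpha\beta\gamma)=(S^{-1}P:_{S^{-1}M}abc/1)=S^{-1}(P:_{M}abc)$, using that localization commutes with the residual by a single element of the ring; applying Theorem \ref{tmain2}(ii) in $M$ gives $(P:_{M}abc)=(P:_{M}ab)$ or $(P:_{M}abc)=(P:_{M}c)$, and localizing these equalities gives $(S^{-1}P:_{S^{-1}M}\alpha\beta\gamma)=(S^{-1}P:_{S^{-1}M}\alpha\beta)$ or $(S^{-1}P:_{S^{-1}M}\alpha\beta\gamma)=(S^{-1}P:_{S^{-1}M}\gamma)$, so that Theorem \ref{tmain2}(ii), now read over the ring $S^{-1}A$, shows $S^{-1}P$ is classical 1-absorbing prime.
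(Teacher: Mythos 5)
Your main argument is essentially identical to the paper's proof: clear denominators to obtain $u\in S$ with $abc(um)\in P$, apply the classical 1-absorbing prime property of $P$ to the nonunits $a,b,c$, and push $ab(um)\in P$ or $c(um)\in P$ back into $S^{-1}P$. You are in fact more careful than the paper on one point: you record that a nonunit $a/s$ of $S^{-1}A$ must have $a$ a nonunit of $A$, which is needed before the hypothesis on $P$ can be invoked and which the paper uses silently. Your alternative route through Theorem \ref{tmain2} is also correct, granted the standard fact that $(S^{-1}P:_{S^{-1}M}a/1)=S^{-1}(P:_{M}a)$.

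The step that does not survive scrutiny is the properness of $S^{-1}P$. From $S^{-1}(M/P)=0$ you conclude that some element of $S$ annihilates $M/P$; that implication holds only when $M/P$ is finitely generated. For a general module $N$, $S^{-1}N=0$ says that each element of $N$ is killed by some element of $S$, not that a single element of $S$ kills all of $N$. Moreover, the gap cannot be repaired from the stated hypotheses: let $A=k[x_{1},x_{2},\dots]$, $\mathfrak{p}_{i}=(x_{i},x_{i+1},\dots)$, $M=\bigoplus_{i\geq1}A/\mathfrak{p}_{i}$, $P=0$, and let $S$ be the multiplicative set generated by the $x_{i}$. Every nonzero $m\in M$ has $(P:_{A}m)=\mathfrak{p}_{j}$ for $j$ the largest index in the support of $m$, so $P$ is classical prime and hence classical 1-absorbing prime; $(P:_{A}M)=\bigcap_{i}\mathfrak{p}_{i}=(0)$ is disjoint from $S$; yet every element of $M$ is $S$-torsion, so $S^{-1}P=S^{-1}M=0$ is not a proper submodule. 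Thus the properness claim is genuinely false in this generality; it is a defect of the proposition itself rather than of your denominator-clearing argument, since the paper's own proof ignores properness altogether. Your proof becomes complete once one either assumes $M/P$ finitely generated or weakens the conclusion to ``$S^{-1}P=S^{-1}M$ or $S^{-1}P$ is a classical 1-absorbing prime submodule of $S^{-1}M$.''
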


\begin{proof}
Let $P$ be a classical 1-absorbing prime submodule of $M$ and $\left(
P:_{A}M\right)  \cap S=\emptyset$. Suppose that $\frac{a}{s}\frac{b}{t}%
\frac{c}{u}\frac{m}{v}\in S^{-1}P$ for some nonunits $\frac{a}{s},\frac{b}%
{t},\frac{c}{u}\in S^{-1}A$ and $\frac{m}{v}\in S^{-1}M$. Then there exists
$z\in S$ such that $z(abcm)=abc(zm)\in P$. Since $P$ is a classical
1-absorbing prime submodule of $M,$ we have $ab(zm)\in P$ or $c(zm)\in P$.
This implies that $\frac{abm}{stv}=\frac{abzm}{stzv}\in S^{-1}P$ or $\frac
{cm}{uv}=\frac{czm}{zuv}\in S^{-1}P$. Consequently, $S^{-1}P$ is a classical
1-absorbing prime submodule of $S^{-1}M$.
\end{proof}

Let $A_{i}$ be a commutative ring with identity and $M_{i}$ be an $A_{i}%
$-module for $i=1,2.$ Let $A=A_{1}\times A_{2}$ and $M=M_{1}\times M_{2}%
.\ $Then $M\ $is an $A$-module and each submodule $P\ $of $M\ $has the form
$P=P_{1}\times P_{2}$ for some submodules $P_{1}$ of $M_{1}$ and $P_{2}$ of
$M_{2}$.

\begin{theorem}
\label{tcar}Let $A=A_{1}\times A_{2}\ $be a decomposable ring and
$M=M_{1}\times M_{2}$ be an $A$-module where $M_{1}$ is an $A_{1}$-module and
$M_{2}$ is an $A_{2}$-module. Suppose that $P=P_{1}\times P_{2}$ is a proper
submodule of $M$. The following statements are equivalent.

(i) $P\ $is a classical 1-absorbing prime submodule of $M.$

(ii)\ $P_{1}=M_{1}\ $and $P_{2}\ $is a classical prime submodule of $M_{2}%
\ $or $P_{2}=M_{2}\ $and $P_{1}\ $is a classical prime submodule of $M_{1}$.

(iii) $P\ $is a classical prime submodule of $M.$
\end{theorem}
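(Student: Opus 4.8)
The plan is to prove the implications $(i)\Rightarrow(ii)\Rightarrow(iii)\Rightarrow(i)$, exploiting the fact that over the decomposable ring $A=A_1\times A_2$ the idempotents $e_1=(1,0)$ and $e_2=(0,1)$ are nonunits, which lets us feed the idempotents into the defining condition. The key structural observation I would record at the outset is that a submodule $P_1\times P_2$ of $M_1\times M_2$ is proper precisely when at least one of $P_1\neq M_1$ or $P_2\neq M_2$ holds, and that for $(a_1,a_2)\in A$ and $(m_1,m_2)\in M$ we have $(a_1,a_2)(m_1,m_2)\in P_1\times P_2$ iff $a_1m_1\in P_1$ and $a_2m_2\in P_2$.

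For $(i)\Rightarrow(ii)$: assume $P=P_1\times P_2$ is classical $1$-absorbing prime. First I would rule out the case where both $P_1\subsetneq M_1$ and $P_2\subsetneq M_2$. Pick $m_1\in M_1\setminus P_1$ and $m_2\in M_2\setminus P_2$. Using the nonunits $a=(1,0)$, $b=(1,0)$, $c=(1,0)$ (whose product is $(1,0)$, still a nonunit) applied to $m=(m_1, m_2')$ where $m_2'$ is chosen in $P_2$ — actually the cleaner route is to exhibit a genuine failure: take a product $abcm$ of the form $(0,0)\in P$ using idempotents that kill one coordinate, and show the $1$-absorbing conclusion forces one of the factors $P_1=M_1$ or $P_2=M_2$. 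Concretely, with $a=(1,0),\ b=(1,0),\ c=(0,1)$ — all nonunits in $A$ — and $m=(m_1,m_2)$ arbitrary, $abcm=(0,0)\in P$, so we must have $abm=(m_1,0)\in P$ or $cm=(0,m_2)\in P$; i.e. for every $m$, either $m_1\in P_1$ or $m_2\in P_2$. If neither $P_i$ is everything, choosing $m_1\notin P_1$ and $m_2\notin P_2$ contradicts this. Hence WLOG $P_1=M_1$, and then properness forces $P_2\neq M_2$. It remains to see $P_2$ is classical prime in $M_2$: given $a_2b_2m_2\in P_2$ with $a_2,b_2\in A_2$, apply the classical $1$-absorbing condition in $M$ to the nonunits $(1,a_2),(1,b_2),(0,1)$ hmm — I need to be careful that these are nonunits; $(1,a_2)$ is a unit iff $a_2$ is a unit, so I would split on whether $a_2,b_2$ are units (if $a_2$ is a unit the classical-prime conclusion $a_2m_2\in P_2$ is trivial after multiplying by the inverse) and otherwise use genuinely nonunit entries. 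With $(a_2,b_2)$ nonunits in $A_2$, take $a=(0,a_2),\ b=(1,b_2)$ — no: I want a product of three nonunits landing in $P_1\times P_2=M_1\times P_2$. The clean choice: nonunits $a=(0,a_2),\ b=(0,b_2),\ c=(1,1)$? then $c$ is a unit. Instead use $a=(1,a_2),\ b=(1,b_2)$ after handling units separately, and $c=(0,1)$: then $abc\cdot(m_1,m_2)=(0,\,a_2b_2m_2)\in M_1\times P_2=P$, so $ab(m_1,m_2)=(m_1, a_2b_2m_2)\in P$ (automatic since $P_1=M_1$ and $a_2b_2m_2\in P_2$) — that gives no information. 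The right idempotent to use is one that does NOT collapse the second coordinate: with $c=(0,1)$ killing the first coordinate the first alternative is free, so I should instead arrange that $cm\in P$ is the informative alternative. Take $a=(0,a_2),\ b=(0,b_2),\ c=(0,1)$? $c$ nonunit, product $=(0,a_2b_2)$, applied to $(m_1,m_2)$ gives $(0,a_2b_2m_2)\in P$; then $ab(m_1,m_2)=(0,a_2b_2m_2)\in P$ again free. Hmm — so the honest move is: since $P_1=M_1$, the submodule $P$ "is" $P_2$ viewed through the projection $M\to M_2$, and classical $1$-absorbing-ness of $P$ transfers to $P_2$ as an $A_2$-module submodule via the ring surjection $A\to A_2$; moreover a $1$-absorbing prime that is a direct-summand-complement is forced to be classical prime because in $A_2$ one can replace $a_2$ by the nonunit $(0,a_2)$-image freely. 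I would make this precise by invoking Theorem~\ref{thom}(ii) applied to the surjection $M\to M_2$ combined with the fact that every ideal of $A_2$ pulls back to an ideal of $A$ contained in the non-maximal part, so $1$-absorbing-ness becomes classical prime; alternatively just verify directly that $(P_2:_{A_2}m_2)$ is a prime ideal of $A_2$ for every $m_2\notin P_2$, using Theorem~\ref{tmain}(ix) for $P$ and the correspondence of residuals.

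For $(ii)\Rightarrow(iii)$: if $P_1=M_1$ and $P_2$ is classical prime in $M_2$, then for $(a_1,a_2)\in A$, $(m_1,m_2)\in M$ with $(a_1,a_2)(m_1,m_2)\in P$ we have $a_2m_2\in P_2$, so $a_2\in(P_2:_{A_2}M_2)$-style — no, classical prime gives $a_2\in\mathrm{ann}$? — classical prime gives $a_2m_2\in P_2\Rightarrow a_2\in(P_2:_{A_2}m_2)$ prime or $m_2\in P_2$; combined with $a_1m_1\in M_1$ automatic, one checks $(a_1,a_2)(m_1,m_2)\in P\Rightarrow (a_1,a_2)\in(P:_A(m_1,m_2))$ or $(m_1,m_2)\in P$, i.e. $(P:_A(m_1,m_2))=M_1\times(P_2:_{A_2}m_2)$ is prime in $A$ (a product $A_1\times\mathfrak q$ with $\mathfrak q$ prime is prime), so $P$ is classical prime by the residual characterization of classical prime submodules (the standard fact, stated in the introduction: $P$ is classical prime iff $(P:_A m)$ is prime for all $m\notin P$). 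Finally $(iii)\Rightarrow(i)$ is immediate from Proposition~\ref{p1}.

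The main obstacle I anticipate is the bookkeeping in $(i)\Rightarrow(ii)$: choosing idempotent-flavoured nonunits $a,b,c$ that are genuinely nonunits in $A_1\times A_2$ while still forcing the split, and separately handling the sub-case where some coordinate entry happens to be a unit in $A_i$ (so the triple fails to be nonunit), which requires the routine reduction "if one factor is a unit the desired membership follows by scaling." Once the dichotomy $P_1=M_1$ or $P_2=M_2$ is established, the rest is a clean translation between residuals in $A$ and in $A_i$, and the equivalence $(ii)\Leftrightarrow(iii)$ is essentially the classical statement that in a product ring a classical prime submodule of a product module must be supported on one factor.
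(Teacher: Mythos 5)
Your cycle $(i)\Rightarrow(ii)\Rightarrow(iii)\Rightarrow(i)$ matches the paper's, and two of the three legs are fine: $(iii)\Rightarrow(i)$ is Proposition \ref{p1}, and your $(ii)\Rightarrow(iii)$ via the residual characterization ($(P:_A(m_1,m_2))=A_1\times(P_2:_{A_2}m_2)$ is prime) is correct. Your opening move in $(i)\Rightarrow(ii)$ --- applying the definition to the nonunits $(1,0),(1,0),(0,1)$ to force $P_1=M_1$ or $P_2=M_2$ --- is also correct, and in fact more elementary and self-contained than the paper's route, which instead observes via Theorem \ref{tmain2} that $(P:_AM)=(P_1:_{A_1}M_1)\times(P_2:_{A_2}M_2)$ is a $1$-absorbing prime ideal and quotes \cite[Corollary 2.5]{X8} to split it.

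However, the second half of $(i)\Rightarrow(ii)$ --- showing that the surviving factor is classical \emph{prime}, not merely classical $1$-absorbing prime --- is a genuine gap. You try several triples of nonunits, correctly observe that each yields a vacuous conclusion, and then retreat to hand-waving (``the honest move is\dots'', ``alternatively just verify directly\dots'') without ever producing a triple that works. The missing idea, which is exactly the paper's trick, is to insert the idempotent as the \emph{middle} factor. With $P_1=M_1$ (your normalization), given $a_2b_2m_2\in P_2$ take the nonunits $a=(0,a_2)$, $b=(0,1)$, $c=(0,b_2)$ and $m=(m_1,m_2)$; then $abcm=(0,a_2b_2m_2)\in M_1\times P_2=P$, and the $1$-absorbing condition gives $abm=(0,a_2m_2)\in P$ or $cm=(0,b_2m_2)\in P$, i.e.\ $a_2m_2\in P_2$ or $b_2m_2\in P_2$. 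Note that $(0,a_2)$, $(0,1)$, $(0,b_2)$ are nonunits of $A_1\times A_2$ no matter whether $a_2,b_2$ are units of $A_2$, so the case split on units that worried you is unnecessary. Your attempts failed because you either placed the idempotent $(0,1)$ in the last slot (making the first alternative $abm\in P$ automatic) or used elements of the form $(1,a_2)$ (which can be units). Your fallback of citing Theorem \ref{thom}(ii) does not close the gap either: it would only give that $P_2$ is classical $1$-absorbing prime over $A$, and the upgrade to classical prime still requires the same idempotent insertion (now at the level of the ideal $(P_2:_{A_2}m_2)$), which you never carry out.
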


\begin{proof}
$(i)\Rightarrow(ii):\ $Suppose that $P\ $is a classical 1-absorbing prime
submodule of $M.$\ Then by Theorem \ref{tmain2}, $(P:_{A}M)=(P_{1}:_{A_{1}%
}M_{1})\times(P_{2}:_{A_{2}}M_{2})$ is a 1-absorbing prime ideal of $A.\ $Then
by \cite[Corollary 2.5]{X8}, $(P_{1}:_{A_{1}}M_{1})=A_{1}\ $or $(P_{2}%
:_{A_{2}}M_{2})=A_{2}.\ $Then we conclude that either $P_{1}=M_{1}$ or
$P_{2}=M_{2}.\ $Without loss of generality we may assume that $P_{2}=M_{2}%
.\ $Let $abm\in P_{1}\ $for some $a,b\in A_{1}$ and $m\in M_{1}.$\ If $a$ or
$b$ is unit, then $am\in P_{1}\ $or $bm\in P_{1}.\ $Now, assume that
$a,b\ $are nonunits in $A_{1}.\ $Then we have $(a,0)(1,0)(b,0)(m,0)=(abm,0)\in
P.\ $Since $P\ $is a classical 1-absorbing prime submodule of $M,\ $we
$(a,0)(1,0)(m,0)=(am,0)\in P$ or $(b,0)(m,0)=(bm,0)\in P.\ $This implies that
$am\in P_{1}\ $or $bm\in P_{1}.\ $Thus, $P_{1}\ $is a classical 1-absorbing
prime submodule of $M_{1}.\ $One can similarly show that $P_{2}\ $is a
classical 1-absorbing prime submodule of $M_{2}.$

$(ii)\Rightarrow(i):\ $Without loss of generality we may assume that
$P_{1}=M_{1}\ $and $P_{2}\ $is a classical prime submodule of $M_{2}.\ $Let
$(a,b)(c,d)(m,n)=(acm,bdn)\in P$ as $(a,b),(c,d)\in A$ and $(m,n)\in M.\ $Then
we have $bdn\in P_{2}.\ $As $P_{2}\ $is a classical prime submodule of
$M_{2},\ $we conclude that either $cn\in P_{2}\ $or $dn\in P_{2}.\ $This
implies that $(a,b)(m,n)\in P$ or $(c,d)(m,n)\in P$. Thus, $P\ $is a classical
prime submodule of $M.$

$(iii)\Rightarrow(i):\ $Follows from Proposition \ref{p1}.
\end{proof}

\begin{lemma}
\label{lem1} Let $A=A_{1}\times A_{2}\times\cdots\times A_{n}$ be a
decomposable ring and $M=M_{1}\times M_{2}\times\cdots\times M_{n}$ be an
$A$-module where for every $1\leq i\leq n$, $M_{i}$ is an $A_{i}$-module,
respectively. A proper submodule $P$ of $M$ is a classical prime submodule of
$M$ if and only if $P=%
{\textstyle\prod\limits_{i=1}^{n}}
P_{i}$ such that for some $k\in\left\{  1,2,..,n\right\}  $, $P_{k}$ is a
classical prime submodule of $M_{k}$, and $P_{i}=M_{i}$ for every
$i\in\left\{  1,2,..,n\right\}  \diagdown\left\{  k\right\}  $.
\end{lemma}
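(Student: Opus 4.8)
The plan is to prove Lemma~\ref{lem1} by induction on $n$, using Theorem~\ref{tcar} as the base case $n=2$ (after observing that, for a direct product, classical prime and classical $1$-absorbing prime coincide). First I would record the structural fact, already noted in the paragraph preceding the lemma, that every submodule $P$ of $M=M_1\times\cdots\times M_n$ has the form $P=\prod_{i=1}^n P_i$ with $P_i$ a submodule of $M_i$; this is what lets us phrase the statement componentwise. For the base case $n=2$, Theorem~\ref{tcar} gives that $P=P_1\times P_2$ is a classical $1$-absorbing prime submodule of $M_1\times M_2$ if and only if it is a classical prime submodule, and this happens exactly when one factor equals the whole module and the other is classical prime; so the equivalence we want holds (with the caveat that here we only need the ``classical prime'' half, which is the $(iii)$ condition in Theorem~\ref{tcar}).

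For the inductive step, assume $n\geq 3$ and that the result holds for products of fewer than $n$ rings/modules. Write $A=A_1\times A'$ and $M=M_1\times M'$ where $A'=A_2\times\cdots\times A_n$ and $M'=M_2\times\cdots\times M_n$, and $P=P_1\times P'$ with $P'=\prod_{i=2}^n P_i$. Applying Theorem~\ref{tcar} to this two-factor decomposition: $P$ is classical prime in $M$ if and only if either $P_1=M_1$ and $P'$ is classical prime in $M'$, or $P'=M'$ and $P_1$ is classical prime in $M_1$. In the first case, the induction hypothesis applied to $P'$ in $M'$ tells us $P'=\prod_{i=2}^n P_i$ with exactly one index $k\in\{2,\dots,n\}$ having $P_k$ classical prime in $M_k$ and all other $P_i=M_i$; combined with $P_1=M_1$ this is precisely the asserted form. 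In the second case, $P'=M'$ means $P_i=M_i$ for all $i\geq 2$, and $P_1$ classical prime in $M_1$ puts $k=1$, again the asserted form. For the converse direction, if $P=\prod P_i$ has the stated form with distinguished index $k$, split off the factor $M_1$ (or regroup so that $M_k$ sits in the ``$M'$'' block if $k\neq 1$, or in the ``$M_1$'' slot if $k=1$) and run Theorem~\ref{tcar} together with the induction hypothesis in reverse to conclude $P$ is classical prime in $M$.

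The only slightly delicate point is bookkeeping the index $k$ across the regrouping: depending on whether $k=1$ or $k\geq 2$ one uses a different half of Theorem~\ref{tcar}, and one must check that the ``all other $P_i=M_i$'' condition is preserved under the regrouping into a two-block product. This is routine but should be spelled out so the reader sees that no second distinguished factor can appear (if both $P_1\neq M_1$ and some $P_j\neq M_j$ with $j\geq 2$, then neither clause of Theorem~\ref{tcar} can hold, so $P$ fails to be classical prime). I expect the main obstacle, such as it is, to be purely notational---keeping the indexing of the $n-1$ remaining factors straight---rather than conceptual; all the mathematical content is already packaged in Theorem~\ref{tcar} and the product decomposition of submodules. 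One could alternatively prove it directly from the definition of classical prime submodule by testing a single triple $(\mathbf a,\mathbf m)$ coordinatewise, but the inductive reduction to Theorem~\ref{tcar} is cleaner and reuses work already done.
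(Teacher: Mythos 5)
Your proposal is correct, but it takes a different route from the paper: the paper disposes of Lemma~\ref{lem1} in one line by citing an external result (Lemma~1 of Mostafanasab, Tekir and Oral, reference \cite{X18}), whereas you give a self-contained inductive argument bootstrapped from Theorem~\ref{tcar}. Your induction is sound: the equivalence $(ii)\Leftrightarrow(iii)$ of Theorem~\ref{tcar} is exactly the $n=2$ case of the lemma, Theorem~\ref{tcar} does not depend on Lemma~\ref{lem1} (so there is no circularity), and the regrouping $M=M_1\times M'$ with $M'=M_2\times\cdots\times M_n$ is legitimate because the classical prime condition on $P'$ as an $A'$-submodule of $M'$ is intrinsic and every submodule of a finite product decomposes componentwise. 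It is worth noting that your strategy is precisely the one the paper itself uses one step later, in the proof of $(i)\Rightarrow(ii)$ of Theorem~\ref{tcargen} (``induction on $n$ by using Theorem~\ref{tcar}''), so your argument has the merit of making the paper self-contained where it currently outsources the classical-prime case; the only thing you give up relative to the citation is brevity. The one point to be careful about, which you already flag, is that in the inductive step the proper-ness hypotheses are preserved (e.g.\ when $P_1=M_1$, properness of $P$ forces $P'\neq M'$ so the induction hypothesis applies), and that the uniqueness of the distinguished index $k$ follows because Theorem~\ref{tcar} rules out two coordinates being simultaneously proper.
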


\begin{proof}
Follows from \cite[Lemma 1]{X18}.
\end{proof}

\begin{theorem}
\label{tcargen}Let $A=A_{1}\times A_{2}\times\cdots\times A_{n}\ $be a
decomposable ring and $M=M_{1}\times M_{2}\times\cdots\times M_{n}$ be an
$A$-module where for every $1\leq i\leq n$, $M_{i}$ is an $A_{i}$-module,
respectively. For a proper submodule $P$ of $M,$ the following statements are equivalent.

(i)$\ P$ is a classical 1-absorbing prime submodule of $M$.

(ii)\ $P=%
{\textstyle\prod\limits_{i=1}^{n}}
P_{i}$ such that for some $k\in\left\{  1,2,..,n\right\}  $ , $P_{k}$ is a
classical 1-absorbing prime submodule of $M_{k}$ and $P_{t}=M_{t}$ for every
$t\in\left\{  1,2,..,n\right\}  \diagdown\left\{  k\right\}  $.

(iii)\ $P\ $is a classical prime submodule of $M.$
\end{theorem}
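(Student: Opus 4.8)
The plan is to prove Theorem \ref{tcargen} by reducing everything to the two-factor case (Theorem \ref{tcar}) via an induction on $n$, and to exploit the auxiliary Lemma \ref{lem1} together with the characterization of $1$-absorbing prime ideals in product rings. The implication $(iii)\Rightarrow(i)$ is immediate from Proposition \ref{p1}, since every classical prime submodule is classical $1$-absorbing prime. For $(ii)\Rightarrow(iii)$, after relabeling we may assume $k=n$, so $P=M_{1}\times\cdots\times M_{n-1}\times P_{n}$ with $P_{n}$ a classical $1$-absorbing prime submodule of $M_{n}$; but then by Theorem \ref{tmain2} $(P_{n}:_{A_{n}}M_{n})$ being a $1$-absorbing prime ideal of $A_{n}$ does not by itself give classical primeness of $P_{n}$ — so here I would instead argue directly: writing $M'=M_{1}\times\cdots\times M_{n-1}$ and $A'=A_{1}\times\cdots\times A_{n-1}$, the submodule $M'\times P_{n}$ of $M'\times M_{n}$ is classical $1$-absorbing prime (this is $(ii)$ with the first factor being the whole module), and applying Theorem \ref{tcar} to the two-factor decomposition $M=M'\times M_{n}$ tells us that $M'\times P_{n}$ is in fact a classical \emph{prime} submodule of $M$. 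Then Lemma \ref{lem1} (applied in the reverse direction, or rather noting that classical primeness of $P$ forces $P_{n}$ itself to be classical prime in $M_{n}$) completes this implication; alternatively one just quotes Theorem \ref{tcar} $(ii)\Rightarrow(iii)$ after the two-factor regrouping.

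The heart of the argument is $(i)\Rightarrow(ii)$, which I would handle by induction on $n$, the base case $n=2$ being exactly Theorem \ref{tcar}. For the inductive step, suppose $P=P_{1}\times\cdots\times P_{n}$ is classical $1$-absorbing prime in $M=M_{1}\times\cdots\times M_{n}$. Regroup as $M=M_{1}\times M''$ where $M''=M_{2}\times\cdots\times M_{n}$ is a module over $A''=A_{2}\times\cdots\times A_{n}$, and $P=P_{1}\times P''$ with $P''=P_{2}\times\cdots\times P_{n}$. By Theorem \ref{tcar} applied to this two-factor decomposition, either $P_{1}=M_{1}$ and $P''$ is a classical prime submodule of $M''$, or $P''=M''$ and $P_{1}$ is a classical prime submodule of $M_{1}$. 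In the second case, $P''=M''$ means $P_{i}=M_{i}$ for all $i\ge 2$ and $P_{1}$ is classical prime (hence classical $1$-absorbing prime) in $M_{1}$, so $(ii)$ holds with $k=1$. In the first case, $P''$ is classical prime in $M''$, so by Lemma \ref{lem1} there is an index $k\in\{2,\dots,n\}$ with $P_{k}$ a classical prime submodule of $M_{k}$ and $P_{i}=M_{i}$ for all $i\in\{2,\dots,n\}\setminus\{k\}$; combined with $P_{1}=M_{1}$ this yields precisely $(ii)$ (with $P_{k}$ classical prime, a fortiori classical $1$-absorbing prime).

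The main obstacle I anticipate is a subtle asymmetry: Theorem \ref{tcar} in the first branch delivers a classical \emph{prime} coordinate (not merely classical $1$-absorbing prime), so one must be careful that the statement of $(ii)$ — which only asks for $P_{k}$ classical $1$-absorbing prime — is correctly matched, and that in the degenerate sub-case $n=2$ (so $M''=M_{n}$, $A''=A_{n}$) Lemma \ref{lem1} collapses to the trivial observation that a classical prime submodule of a single module is classical prime. A second point requiring care is that ``nonunits of $A=A_{1}\times\cdots\times A_{n}$'' includes all elements with at least one nonunit coordinate, so when transporting a relation $abm\in P_{k}$ in $M_{k}$ up to $M$ one pads with the identity in the $k$-th slot and zeros elsewhere, exactly as in the proof of Theorem \ref{tcar} $(i)\Rightarrow(ii)$; I would make that padding explicit once. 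Apart from these bookkeeping subtleties, every step is a direct invocation of Theorem \ref{tcar}, Lemma \ref{lem1}, and Proposition \ref{p1}, so the proof is short.
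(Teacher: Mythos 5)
Your handling of $(iii)\Rightarrow(i)$ (Proposition \ref{p1}) and of $(i)\Rightarrow(ii)$ (induction on $n$, regrouping $M=M_{1}\times M''$, feeding the two-factor case into Theorem \ref{tcar} and then Lemma \ref{lem1}) is exactly the paper's route and is fine. The problem is $(ii)\Rightarrow(iii)$, and you half-sensed it yourself before papering over it. Condition (ii) of Theorem \ref{tcargen} only asserts that $P_{k}$ is classical $1$-absorbing prime in $M_{k}$, whereas condition (ii) of Theorem \ref{tcar} and the hypothesis of Lemma \ref{lem1} both require the distinguished component to be classical \emph{prime}. So when you claim that ``$M'\times P_{n}$ is classical $1$-absorbing prime (this is (ii) with the first factor being the whole module)'' and then ``apply Theorem \ref{tcar}'' to upgrade it to classical prime, you are invoking Theorem \ref{tcar} without having verified any of its three equivalent conditions: its (i) is precisely what you are trying to establish, and its (ii) demands classical primeness of $P_{n}$, which you do not have. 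The same objection defeats your fallback of ``just quoting Theorem \ref{tcar} $(ii)\Rightarrow(iii)$.''

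In fact $(ii)\Rightarrow(iii)$ fails as stated. Take $A=\mathbb{Z}\times\mathbb{Z}_{p^{2}}$, $M=\mathbb{Z}\times\mathbb{Z}_{p^{2}}[x]$ and $P=\mathbb{Z}\times\langle\overline{0}\rangle$. By Example \ref{ex1} the zero submodule of $\mathbb{Z}_{p^{2}}[x]$ is classical $1$-absorbing prime but not classical prime, so (ii) holds with $k=2$. Yet for the nonunits $a=(0,\overline{p})$, $b=(0,\overline{1})$, $c=(0,\overline{p})$ and $m=(0,x)$ one has $abcm=(0,\overline{p}^{2}x)=(0,0)\in P$ while $abm=cm=(0,\overline{p}x)\notin P$; hence $P$ is not classical $1$-absorbing prime, let alone classical prime, in $M$. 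The statement is only correct if (ii) is read with ``classical prime'' in place of ``classical $1$-absorbing prime,'' matching Theorem \ref{tcar}(ii) and Lemma \ref{lem1}; under that reading $(ii)\Rightarrow(iii)$ is immediate from Lemma \ref{lem1} (which is all the paper says), and since your $(i)\Rightarrow(ii)$ induction already produces a classical \emph{prime} component, the rest of your argument goes through unchanged.
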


\begin{proof}
$(i)\Rightarrow(ii):\ $Follows from induction on $n$ by using Theorem
\ref{tcar}.

$(ii)\Rightarrow(iii):\ $Follows from Lemma \ref{lem1}.

$(iii)\Rightarrow(i):\ $Follows from Proposition \ref{p1}.
\end{proof}

\begin{theorem}
\label{theoremfin}Let $M$ be an $A$-module and $P\ $be a classical 1-absorbing
prime submodule which is not classical prime. Then $A\ $is a local ring with
unique maximal ideal $\mathfrak{q}$ of $A$ such that $\mathfrak{q}%
^{2}\subseteq(P:_{A}m)$ for some $m\in M-P.\ $
\end{theorem}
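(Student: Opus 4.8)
The plan is to transfer the statement to the corresponding fact about $1$-absorbing prime ideals of $A$ by means of Theorem \ref{tmain}(ix), and then to extract the inclusion $\mathfrak{q}^{2}\subseteq(P:_{A}m)$ by a short direct computation with the $1$-absorbing prime definition.

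First I would produce the witness $m$. Since $P$ is not classical prime, there are $a,b\in A$ and $m\in M$ with $abm\in P$ but $am\notin P$ and $bm\notin P$. If $a$ (or $b$) were a unit, then $abm\in P$ would force $bm\in P$ (resp.\ $am\in P$), a contradiction; hence $a,b$ are nonunits. Also $m\notin P$, for otherwise $am\in P$. Thus $m\in M-P$, and since $P$ is classical $1$-absorbing prime, Theorem \ref{tmain}(ix) tells us that $J:=(P:_{A}m)$ is a $1$-absorbing prime ideal of $A$. Moreover $ab\in J$ while $a\notin J$ and $b\notin J$, so $J$ is \emph{not} a prime ideal. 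In particular $J$ is a proper ideal, because $m\notin P$.

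Next, $A$ admits the $1$-absorbing prime ideal $J$ which is not prime, so by the result of Yassine et al.\ \cite{X8} the ring $A$ is local; let $\mathfrak{q}$ be its unique maximal ideal. (If one prefers a self-contained argument: were $A$ not local, there would be nonunits $s,t$ with $s+t=1$; but from $ab\in J$ one shows as below that $sa\in J$ for every nonunit $s$, so $a=(s+t)a=sa+ta\in J$, contradicting $a\notin J$.)

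Finally I would prove $\mathfrak{q}^{2}\subseteq J$. Keep the nonunits $a,b$ with $ab\in J$ and $a,b\notin J$. For any $s\in\mathfrak{q}$ we have $s\,ab\in J$ with $s,a,b$ nonunits, so the $1$-absorbing prime property gives $sa\in J$ or $b\in J$; as $b\notin J$ this yields $sa\in J$, i.e.\ $\mathfrak{q}a\subseteq J$. Now for arbitrary $s,t\in\mathfrak{q}$ we get $sta=s(ta)\in J$ with $s,t,a$ nonunits, hence $st\in J$ or $a\in J$; since $a\notin J$ we conclude $st\in J$. Therefore $\mathfrak{q}^{2}\subseteq J=(P:_{A}m)$ with $m\in M-P$, which is exactly the assertion. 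The only real care needed is the bookkeeping in the first step (that the elements coming out of "not classical prime" are genuine nonunits and that $m\notin P$) together with the observation that Theorem \ref{tmain}(ix) lets us work entirely inside the ideal $(P:_{A}m)$; the inclusion $\mathfrak{q}^{2}\subseteq(P:_{A}m)$ itself is then just two applications of the definition and is not an obstacle.
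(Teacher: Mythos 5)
Your proof is correct, and its skeleton is the same as the paper's: both arguments extract a witness $m\in M-P$ for which $(P:_{A}m)$ fails to be prime, invoke Theorem \ref{tmain}(ix) to see that $(P:_{A}m)$ is nevertheless a $1$-absorbing prime ideal, and then conclude from the existence of a non-prime $1$-absorbing prime ideal. The only divergence is in the last step: the paper simply cites \cite[Lemma 2.1]{Abdel} for the fact that a ring admitting a $1$-absorbing prime ideal $J$ that is not prime must be local with $\mathfrak{q}^{2}\subseteq J$, whereas you reprove this from scratch (first $\mathfrak{q}a\subseteq J$ from $sab\in J$ and $b\notin J$, then $st\in J$ from $sta\in J$ and $a\notin J$, plus the $s+t=1$ trick for locality). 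Your version buys self-containedness at the cost of a few extra lines; all the individual verifications (that $a,b$ are nonunits, that $m\notin P$, that $J$ is proper) are handled correctly.
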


\begin{proof}
Suppose that $P$ is a classical 1-absorbing prime submodule of $M$\ that is
not a classical prime submodule. Since $P$ is not classical prime, there
exists $m\in M-P$ such that $(P:_{A}m)$ is not a prime ideal of $A$. On the
other hand, since $P\ $is a classical 1-absorbing prime submodule of
$M,\ $then by Theorem \ref{tmain}, $(P:_{A}m)$ is a 1-absorbing prime ideal of
$A.\ $Thus, $A\ $admits a 1-absorbing prime ideal that is not prime. Then by
\cite[Lemma 2.1]{Abdel}, $A\ $is a local ring with unique maximal ideal
$\mathfrak{q}$ of $A$ such that $\mathfrak{q}^{2}\subseteq(P:_{A}m)$ for some
$m\in M-P.$
\end{proof}

\section{Classical prime like conditions in amalgamated duplication of a
module}

Let\ $A$ be a ring and $I$ be an ideal of $A$. Recall from \cite{Danna} that
the amalgamated duplication of a ring $A$ along an ideal $I,$ denoted by
$A\bowtie I=\{(a,a+i):a\in A,i\in I\},$ is a special subring of $A\times A$
with componentwise addition and multiplication. In fact, $A\bowtie I$ is a
commutative subring having the same identity of $A\times A.\ $Let $M\ $be an
$A$-module and $I$ be an ideal of $A.\ $Recall from \cite{Bou} that the
amalgamated duplication of an $A$-module $M$ along an ideal $I$, denoted by
$M\bowtie I=\{(m,m+m^{\prime}):m\in M,\ m^{\prime}\in IM\}$ is an $A\bowtie
I$- module with componentwise addition and the following scalar
multiplication:\ $(a,a+i)(m,m+m^{\prime})=(am,(a+i)(m+m^{\prime}))$ for each
$(a,a+i)\in A\bowtie I$ and $(m,m+m^{\prime})\in M\bowtie I\ $\cite{Bou}. Note
that if we consider $M=A$ as an $A$-module, then $M\bowtie I$ and $A\bowtie I$
coincide. In this part of the paper, we investigate the classical prime
submodules, classical 1-absorbing prime submodules and classical 2-absorbing
submodules of the amalgamated duplication $M\bowtie I$ of an $A$-module
$M\ $along an ideal $I$. We start with the following lemmas which will be
frequently used in our main theorem (Theorem \ref{tmainnn}).

\begin{lemma}
\label{lemfin}Let $M$ be an $A$-module, $P\ $be a submodule of $M$ and $I\ $be
an ideal of $A.\ $Consider the $A\bowtie I$-module $M\bowtie I.\ $The
following statements are satisfied:

(i) $(P\bowtie I:_{M\bowtie I}(a,a+i))=(P:_{M}a)\bowtie I$ for every $a\in A$
and $i\in I.$

(ii) $(P\bowtie I:_{A\bowtie I}(m,m+m^{\prime}))=(P:_{A}m)\bowtie I$ for every
$m\in M$ and $m^{\prime}\in IM.$
\end{lemma}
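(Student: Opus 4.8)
The plan is to prove each of the two residual identities by a routine double-inclusion argument, using the explicit description of the scalar multiplication on $M\bowtie I$. For part (i), fix $a\in A$ and $i\in I$. An element of $M\bowtie I$ has the form $(m,m+m')$ with $m\in M$ and $m'\in IM$. By definition of the module action, $(a,a+i)(m,m+m')=(am,(a+i)(m+m'))=(am,am+(am'+i(m+m')))$. The second coordinate's ``correction term'' $am'+i(m+m')$ lies in $IM$ since $m',(m+m')\in M$ and $i\in I$ (here $am'\in IM$ because $m'\in IM$ and $IM$ is a submodule). Hence $(a,a+i)(m,m+m')\in P\bowtie I$ if and only if $am\in P$, which is exactly the condition $m\in(P:_M a)$; and when this holds, $(m,m+m')\in(P:_M a)\bowtie I$ automatically since $m'\in IM$. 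This gives one inclusion; the reverse inclusion is identical read backwards. So I would write: $(m,m+m')\in(P\bowtie I:_{M\bowtie I}(a,a+i))$ $\iff$ $am\in P$ $\iff$ $(m,m+m')\in(P:_M a)\bowtie I$.

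For part (ii), fix $m\in M$ and $m'\in IM$. A typical element of $A\bowtie I$ is $(a,a+i)$ with $a\in A$, $i\in I$. As computed above, $(a,a+i)(m,m+m')=(am,am+(am'+i(m+m')))$ with the correction term in $IM$, so this product lies in $P\bowtie I$ precisely when $am\in P$, i.e.\ $a\in(P:_A m)$. Thus $(a,a+i)\in(P\bowtie I:_{A\bowtie I}(m,m+m'))$ iff $a\in(P:_A m)$, which says exactly that $(a,a+i)\in(P:_A m)\bowtie I$ (the condition $i\in I$ being part of membership in $A\bowtie I$ already). Again both inclusions fall out of the same equivalence.

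I do not anticipate a genuine obstacle here; the content is purely bookkeeping with the componentwise operations. The one point that requires a moment's care is verifying that the ``correction term'' in the second coordinate genuinely lands in $IM$ so that membership in $P\bowtie I$ really is controlled only by the first coordinate — this uses that $IM$ is a submodule of $M$ (closed under the $A$-action and addition) together with $i\in I$ and $m'\in IM$. Once that is noted, both parts reduce to the single observation that, for any element $(a,a+i)\in A\bowtie I$ and any $(m,m+m')\in M\bowtie I$, one has $(a,a+i)(m,m+m')\in P\bowtie I \iff am\in P$, and I would state and use this as the common core of both arguments.
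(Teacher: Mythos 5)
Your proposal is correct and follows essentially the same route as the paper: both arguments compute $(a,a+i)(m,m+m')=(am,am+(am'+im+im'))$, check that the correction term lies in $IM$, and conclude that membership in $P\bowtie I$ is governed solely by whether $am\in P$, giving both inclusions at once. Your explicit isolation of this common core for parts (i) and (ii) is a slight streamlining but not a different method.
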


\begin{proof}
It is easy to see that if $P\ $is a submodule of $M,\ $then $P\bowtie
I=\{(m,m+m^{\prime}):m\in P,m^{\prime}\in IM\}$ is an $A\bowtie I$-submodule
of $M\bowtie I.$

(i):\ Let $(m,m+m^{\prime})\in(P:_{M}a)\bowtie I$ for some $m\in M,m^{\prime
}\in IM.\ $Then we have $m\in(P:_{M}a)$ which implies that $am\in P.\ $Then we
conclude that $(a,a+i)(m,m+m^{\prime})=(am,am+am^{\prime}+im+im^{\prime})\in
P\bowtie I$ as $am^{\prime}+im+im^{\prime}\in IM.\ $Thus we have
$(m,m+m^{\prime})\in(P\bowtie I:_{M\bowtie I}(a,a+i)).\ $For the converse,
take $(m,m+m^{\prime})\in(P\bowtie I:_{M\bowtie I}(a,a+i)).\ $Then we have
$(a,a+i)(m,m+m^{\prime})\in P\bowtie I$ which implies that $am\in P$, that is,
$m\in(P:_{M}a).\ $This gives$\ (m,m+m^{\prime})\in(P:_{M}a)\bowtie I\ $which
completes the proof.

(ii):\ It is similar to (i).
\end{proof}

\begin{lemma}
\label{lemfin2}(\cite[Lemma 3.1]{X4}) Let $M\ $be an $A$-module and $P\ $a
submodule of $M.\ $The following $P$ is a classical prime submodule if and
only if $(P:_{M}ab)=(P:_{M}a)$ or $(P:_{M}ab)=(P:_{M}b)$ for every $a,b\in
A.\ $
\end{lemma}

\begin{lemma}
\label{lemfin3}Let $A$ be a ring, $I$ be an ideal of $A\ $and consider the
amalgamated duplication $A\bowtie I.\ $For every $i\in I$ and $a\in
A,\ (a,a+i)$ is a unit of $A\bowtie I$ if and only if $a\ $and $a+i$ are units
in $A.$
\end{lemma}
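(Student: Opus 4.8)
The plan is to prove Lemma~\ref{lemfin3} by unwinding the definition of a unit in the subring $A\bowtie I$ of $A\times A$, and exploiting the fact that $(1,1)$ is the identity of $A\bowtie I$. First I would record the easy direction: suppose $(a,a+i)$ is a unit in $A\bowtie I$, so there is $(b,b+j)\in A\bowtie I$ with $(a,a+i)(b,b+j)=(1,1)$. Reading this componentwise gives $ab=1$ and $(a+i)(b+j)=1$ in $A$, hence $a$ and $a+i$ are units of $A$ (with inverses $b$ and $b+j$ respectively).

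For the converse, assume $a$ and $a+i$ are units of $A$. The natural candidate for the inverse is $(a^{-1},(a+i)^{-1})$, and the only thing to check is that this element actually lies in $A\bowtie I$, i.e. that $(a+i)^{-1}-a^{-1}\in I$. I would compute
\[
(a+i)^{-1}-a^{-1}=\frac{a-(a+i)}{a(a+i)}=\frac{-i}{a(a+i)}=-i\,a^{-1}(a+i)^{-1},
\]
which is an element of $I$ since $i\in I$ and $I$ is an ideal of $A$. Therefore $(a^{-1},(a+i)^{-1})\in A\bowtie I$, and it is clearly a two-sided inverse of $(a,a+i)$ since multiplication in $A\bowtie I$ is componentwise and yields $(1,1)$. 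Hence $(a,a+i)$ is a unit of $A\bowtie I$.

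I do not expect any real obstacle here; the statement is essentially bookkeeping about the subring $A\bowtie I\subseteq A\times A$. The one point that deserves a moment's care is verifying the membership $(a+i)^{-1}-a^{-1}\in I$ in the converse direction, since it is tempting to assert $(a^{-1},(a+i)^{-1})\in A\bowtie I$ without justification; the short computation above settles it cleanly. Everything else is immediate from the componentwise ring structure and the fact that a unit of a product ring is exactly a pair of units.
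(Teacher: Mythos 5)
Your proof is correct and takes essentially the same route as the paper: both arguments exhibit the componentwise inverse $(a^{-1},(a+i)^{-1})$ and reduce the whole matter to checking it lies in $A\bowtie I$ (the paper writes this element as $(x,\,x-yxi)$ with $ax=1=(a+i)y$, which is the same identity $(a+i)^{-1}=a^{-1}-i\,a^{-1}(a+i)^{-1}$ read in the other direction). Your key computation $(a+i)^{-1}-a^{-1}=-i\,a^{-1}(a+i)^{-1}\in I$ is exactly the point that needs care, and it is right; just phrase it with inverses rather than fractions in the final write-up.
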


\begin{proof}
The "if part" is clear. For the "only if part", assume that $a\ $and $a+i$ are
units in $A.$\ Then there exists $x,y\in A$ such that $ax=1=(a+i)y$.\ Then we
conclude that $(a+i)(x-yxi)=ax-yaxi+ix-yxi^{2}=ax+ix-ixy(a+i)=1$. Then we have
$(a,a+i)(x,x-yxi)=(1,1)$ which completes the proof.
\end{proof}

\begin{theorem}
\label{tmainnn}Let $M$ be an $A$-module, $P\ $be a submodule of $M$ and
$I\ $be an ideal of $A.\ $Consider the $A\bowtie I$-module $M\bowtie I.\ $The
following statements are satisfied:

(i)\ $P\bowtie I\ $is a classical prime submodule of $M\bowtie I$ if and only
if $P$ is a classical prime submodule of $M.\ $

(ii)\ $P\bowtie I\ $is a classical 1-absorbing prime submodule of $M\bowtie I$
if and only if $P$ is a classical 1-absorbing prime submodule of $M.\ $

(iii)\ $P\bowtie I\ $is a classical 2-absorbing submodule of $M\bowtie I$ if
and only if $P$ is a classical 2-absorbing submodule of $M.\ $
\end{theorem}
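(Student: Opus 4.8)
The plan is to prove all three equivalences by the same template: translate membership and residual computations in $M\bowtie I$ into membership and residual computations in $M$ via Lemma \ref{lemfin}, then invoke the characterizations of classical prime (Lemma \ref{lemfin2}), classical $1$-absorbing prime (Theorem \ref{tmain2}, specifically the residual-quotient form in part (ii)), and classical $2$-absorbing submodules in terms of residuals. The auxiliary ingredient throughout will be Lemma \ref{lemfin3}: an element $(a,a+i)\in A\bowtie I$ is a nonunit precisely when $a$ or $a+i$ is a nonunit in $A$, which is what lets us pass between nonunit hypotheses on the two sides.

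For part (i): I would show $P\bowtie I$ is classical prime in $M\bowtie I$ iff for all $(a,a+i),(b,b+j)\in A\bowtie I$ one has $(P\bowtie I:_{M\bowtie I}(a,a+i)(b,b+j))$ equals $(P\bowtie I:_{M\bowtie I}(a,a+i))$ or $(P\bowtie I:_{M\bowtie I}(b,b+j))$, using Lemma \ref{lemfin2} applied over the ring $A\bowtie I$. By Lemma \ref{lemfin}(i), $(P\bowtie I:_{M\bowtie I}(a,a+i))=(P:_Ma)\bowtie I$, and since $(a,a+i)(b,b+j)=(ab,ab+\text{(something in }I))$, the product residual is $(P:_Mab)\bowtie I$. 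So the condition becomes $(P:_Mab)\bowtie I=(P:_Ma)\bowtie I$ or $(P:_Mab)\bowtie I=(P:_Mb)\bowtie I$; the operation $N\mapsto N\bowtie I$ is injective on submodules of $M$ (it determines $N$ as the first-coordinate projection), so this is equivalent to $(P:_Mab)=(P:_Ma)$ or $(P:_Mab)=(P:_Mb)$, i.e. to $P$ being classical prime in $M$ by Lemma \ref{lemfin2}. The one subtlety: given $a,b\in A$ I must realize them as first coordinates of elements of $A\bowtie I$, which is immediate via $(a,a)$ and $(b,b)$, so every instance of the condition in $M$ is captured. Part (ii) runs identically but using Theorem \ref{tmain2}(i)$\Leftrightarrow$(ii): $P\bowtie I$ is classical $1$-absorbing prime iff for all nonunits $\alpha,\beta,\gamma\in A\bowtie I$, $(P\bowtie I:_{M\bowtie I}\alpha\beta\gamma)$ equals $(P\bowtie I:_{M\bowtie I}\alpha\beta)$ or $(P\bowtie I:_{M\bowtie I}\gamma)$; translating via Lemma \ref{lemfin}(i) and the injectivity of $N\mapsto N\bowtie I$ reduces this to the corresponding statement for nonunits in $A$. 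Here Lemma \ref{lemfin3} does real work in both directions: to push a condition on nonunits $a,b,c\in A$ up, note $(a,a),(b,b),(c,c)$ are nonunits in $A\bowtie I$ by Lemma \ref{lemfin3}; to pull a condition down, given nonunits $(a,a+i),(b,b+j),(c,c+k)$ in $A\bowtie I$, Lemma \ref{lemfin3} says in each pair at least one coordinate is a nonunit, and one should split into cases — but since the residual $(P\bowtie I:_{M\bowtie I}(a,a+i))$ only depends on $a$ (the first coordinate) by Lemma \ref{lemfin}(i), one may worry about the case where $a$ is a unit but $a+i$ is not. I would handle this by observing that if $a$ is a unit then $(a,a+i)^{-1}$ need not exist, yet $(P:_Ma)=P$ and the residual identities still hold trivially; alternatively, and more cleanly, note that whichever of $a,a+i$ is a nonunit, it suffices because the statement to be verified in $M$ is quantified over all nonunits, and the case analysis stays finite.

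For part (iii): recall that $P$ is classical $2$-absorbing in $M$ iff $abcm\in P$ implies $abm\in P$ or $acm\in P$ or $bcm\in P$ (for $a,b,c\in A$, $m\in M$), equivalently phrased via residuals $(P:_Mabc)=(P:_Mab)\cup(P:_Mac)\cup(P:_Mbc)$. The same dictionary — Lemma \ref{lemfin}(i) for triple products, injectivity of $N\mapsto N\bowtie I$, and realizing scalars $a,b,c\in A$ as $(a,a),(b,b),(c,c)\in A\bowtie I$ — reduces the $A\bowtie I$ condition to the $A$ condition. No nonunit hypotheses appear here, so Lemma \ref{lemfin3} is not needed for (iii); the argument is purely about products $(a,a+i)(b,b+j)(c,c+k)$ having first coordinate $abc$, and about the union of three submodules of the form $N\bowtie I$ being $(\,N_1\cup N_2\cup N_3)\bowtie I$ when the $N_t$ are nested appropriately — actually one should be slightly careful that $N_1\bowtie I\,\cup\,N_2\bowtie I\,\cup\,N_3\bowtie I=(N_1\cup N_2\cup N_3)\bowtie I$ holds set-theoretically, which it does since $(m,m+m')\in N_t\bowtie I$ iff $m\in N_t$. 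I expect the main obstacle, across all three parts, to be the bookkeeping in part (ii) around nonunits: making sure that every case of the $M$-side condition is hit when we only control first coordinates in $A\bowtie I$, and conversely that a nonunit triple in $A\bowtie I$ always yields a usable nonunit triple in $A$. Once Lemma \ref{lemfin3} is in place this is routine but it is the only place where the proof is not a mechanical transcription.
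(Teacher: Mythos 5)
Your overall template (translate residuals via Lemma \ref{lemfin}, use the injectivity of $N\mapsto N\bowtie I$ on first coordinates, and invoke Lemma \ref{lemfin2}, Theorem \ref{tmain2} and the residual characterization of classical $2$-absorbing submodules) is exactly the paper's strategy, and your treatments of (i) and (iii) are correct. The gap is in the converse direction of (ii), at precisely the point you flagged and then waved away. Given nonunits $(a,a+i),(b,b+j),(c,c+k)$ of $A\bowtie I$, Lemma \ref{lemfin3} only guarantees that in each pair at least one coordinate is a nonunit, while by Lemma \ref{lemfin} the residuals $(P\bowtie I:_{M\bowtie I}\,\cdot\,)$ depend only on the first coordinates $a,b,c$. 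If, say, $a$ is a unit but $a+i$ is not, the identity you must verify reduces to $(P:_{M}bc)=(P:_{M}b)$ or $(P:_{M}bc)=(P:_{M}c)$ with $b,c$ possibly both nonunits --- a classical-prime-type conclusion for the pair $b,c$ that is \emph{not} a consequence of $P$ being classical $1$-absorbing prime. Neither of your proposed fixes closes this: the ``holds trivially'' claim is only true when the unit first coordinate occupies the $\gamma$ slot (where $(P:_{M}abc)=(P:_{M}ab)$ is immediate), not the $\alpha$ or $\beta$ slot; and ``whichever of $a,a+i$ is a nonunit suffices'' does not help because the residual never sees $a+i$.

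The paper closes this with a dichotomy you omit. If $P$ is classical prime, reduce to part (i) and Proposition \ref{p1}. If $P$ is classical $1$-absorbing prime but not classical prime, Theorem \ref{theoremfin} forces $(A,\mathfrak{q})$ to be local; then $I\subseteq\mathfrak{q}=Jac(A)$, so $a$ a unit implies $a+i$ a unit, and Lemma \ref{lemfin3} upgrades to: $(a,a+i)$ is a nonunit of $A\bowtie I$ if and only if $a$ is a nonunit of $A$. This eliminates the problematic case entirely and makes the translation via Theorem \ref{tmain2} legitimate. You need this case split (or an equivalent argument) for (ii); the forward direction of (ii), using $(a,a),(b,b),(c,c)$, is fine as you have it.
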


\begin{proof}
$(i):\ $Suppose that $P\bowtie I\ $is a classical prime submodule of $M\bowtie
I.\ $Let $abm\in P$ for some $a,b\in A$ and $m\in M.\ $Then we have
$(a,a)(b,b)(m,m)=(abm,abm)\in P\bowtie I.\ $As $P\bowtie I\ $is a classical
prime submodule of $M\bowtie I,\ $we have $(a,a)(m,m)\in P\bowtie I$ or
$(b,b)(m,m)\in P\bowtie I$ which implies that $am\in P$ or $bm\in P.\ $Thus,
$P\ $is a classical prime submodule of $M.\ $For the converse, assume that $P$
is a classical prime submodule of $M.\ $Let $(a,a+i),(b,b+j)\in A\bowtie I$.
Then by Lemma \ref{lemfin}, $\left(  P\bowtie I:_{M\bowtie I}%
(a,a+i)(b,b+j)\right)  =(P:_{M}ab)\bowtie I.\ $Since $P\ $is a classical prime
submodule, by Lemma \ref{lemfin2}, we have $(P:_{M}ab)=(P:_{M}a)$ or
$(P:_{M}ab)=(P:_{M}b).\ $Without loss of generality, we may assume that
$(P:_{M}ab)=(P:_{M}a)$. Again by Lemma \ref{lemfin}, we conclude that
\begin{align*}
\left(  P\bowtie I:_{M\bowtie I}(a,a+i)(b,b+j)\right)   &  =(P:_{M}ab)\bowtie
I\\
&  =(P:_{M}a)\bowtie I\\
&  =\left(  P\bowtie I:_{M\bowtie I}(a,a+i)\right)  .
\end{align*}
In other case, one similarly have $\left(  P\bowtie I:_{M\bowtie
I}(a,a+i)(b,b+j)\right)  =\left(  P\bowtie I:_{M\bowtie I}(b,b+j)\right)  $.
Then by Lemma \ref{lemfin2}, $P\bowtie I\ $is a classical prime submodule of
$M\bowtie I$.

$(ii):\ $Assume that $P\bowtie I\ $is a classical 1-absorbing prime submodule
of $M\bowtie I$. Let $abcm\in P$ for some nonunits $a,b,c\in A$ and $m\in
M.\ $Then by Lemma \ref{lemfin3}, $(a,a),(b,b),(c,c)\in A\bowtie I$ are
nonunits and $(a,a)(b,b)(c,c)(m,m)\in P\bowtie I.\ $Since $P\bowtie I\ $is a
classical 1-absorbing prime submodule, we have $(a,a)(b,b)(m,m)\in P\bowtie I$
or $(c,c)(m,m)\in P\bowtie I.\ $This implies that $abm\in P$ or $cm\in P$,
that is, $P$ is a classical 1-absorbing prime submodule of $M.\ $For the
converse, assume that $P$ is a classical 1-absorbing prime submodule of
$M.\ $If $P\ $is a classical prime submodule of $M,\ $then by (i),\ $P\bowtie
I$ is a classical prime submodule of $M\bowtie I.\ $Then by Proposition
\ref{p1}, $P\bowtie I$ is a classical 1-absorbing prime submodule of $M\bowtie
I$.\ Now, assume that $P\ $is not a classical prime submodule of $M.\ $Then by
Theorem \ref{theoremfin}, $(A,\mathfrak{q})$ is a local ring such that
$\mathfrak{q}^{2}\subseteq(P:_{A}m)$ for some $m\in M-P.\ $Since
$I\subseteq\mathfrak{q}=Jac(A),\ $where $Jac(A)$ is the Jacobson radical of
$A,\ $by Lemma \ref{lemfin3}, $(a,a+i)$ is a unit in $A\bowtie I$ if and only
if $a$ is a unit in $A.\ $Now, let $(a,a+i),(b,b+j),(c,c+k)$ be nonunits in
$A\bowtie I.\ $Then we have $a,b,c$ are nonunits in $A.\ $Since $P\ $is a
classical 1-absorbing prime submodule of $M,\ $by Theorem \ref{tmain2},
$(P:_{M}abc)=(P:_{M}ab)$ or $(P:_{M}abc)=(P:_{M}c).\ $Without loss of
generality, we may assume that $(P:_{M}abc)=(P:_{M}ab).\ $Then by Lemma
\ref{lemfin},
\begin{align*}
\left(  P\bowtie I:_{M\bowtie I}(a,a+i)(b,b+j)(c,c+k\right)  )  &
=(P:_{M}abc)\bowtie I\\
&  =(P:_{M}ab)\bowtie I\\
&  =(P\bowtie I:_{M\bowtie I}(a,a+i)(b,b+j)).
\end{align*}
In other case, one can similarly have,
\[
\left(  P\bowtie I:_{M\bowtie I}(a,a+i)(b,b+j)(c,c+k\right)  )=(P\bowtie
I:_{M\bowtie I}(c,c+k)).
\]
Then by Theorem \ref{tmain2}, $P\bowtie I\ $is a classical 1-absorbing prime
submodule of $M\bowtie I.$

(iii):\ Let $P\bowtie I\ $be a classical 2-absorbing submodule of $M\bowtie
I.\ $A similar argument in (i) or (ii) shows that $P\ $is a classical
2-absorbing submodule of $M.\ $Now, for the converse assume that $P\ $is a
classical 2-absorbing submodule of $M.$ Choose, $(a,a+i),(b,b+j)$ and
$(c,c+k)\in A\bowtie I.\ $Then by Lemma \ref{lemfin}, we have
\[
\left(  P\bowtie I:_{M\bowtie I}(a,a+i)(b,b+j)(c,c+k\right)  )=(P:_{M}%
abc)\bowtie I.
\]
Also, by \cite[Lemma 2]{X18}, we have $(P:_{M}abc)=(P:_{M}ab)\cup
(P:_{M}ac)\cup(P:_{M}bc).\ $Now, let $(m,m+m^{\prime})\in\left(  P\bowtie
I:_{M\bowtie I}(a,a+i)(b,b+j)(c,c+k\right)  ).\ $Then by \ref{lemfin}, we have
$(m,m+m^{\prime})\in(P:_{M}abc)\bowtie I\ $which implies that $m\in
(P:_{M}abc)=(P:_{M}ab)\cup(P:_{M}ac)\cup(P:_{M}bc).\ $Without loss of
generality, we may assume that $m\in(P:_{M}ab).\ $This gives $(m,m+m^{\prime
})\in(P:_{M}ab)\bowtie I=(P\bowtie I:_{M\bowtie I}(a,a+i)(b,b+j)).\ $Then we
have $\left(  P\bowtie I:_{M\bowtie I}(a,a+i)(b,b+j)(c,c+k\right)  )$ is a
subset of $(P\bowtie I:_{M\bowtie I}(a,a+i)(b,b+j))\cup(P\bowtie I:_{M\bowtie
I}(a,a+i)(c,c+k))\cup(P\bowtie I:_{M\bowtie I}(b,b+j)(c,c+k))$. Since the
other containment is always true, we have the equality
\begin{align*}
&  \left(  P\bowtie I:_{M\bowtie I}(a,a+i)(b,b+j)(c,c+k\right)  )\\
&  =(P\bowtie I:_{M\bowtie I}(a,a+i)(b,b+j))\cup(P\bowtie I:_{M\bowtie
I}(a,a+i)(c,c+k))\cup(P\bowtie I:_{M\bowtie I}(b,b+j)(c,c+k)).
\end{align*}
Then by by \cite[Lemma 2]{X18}, $P\bowtie I\ $is a classical 2-absorbing
submodule of $M\bowtie I$.
\end{proof}

\end{document}